\numberwithin{equation}{section}
\newtheorem{thm}[equation]{Theorem} 
\newtheorem{cor}[equation]{Corollary}
\newtheorem{lm}[equation]{Lemma}
\newtheorem{clm}[equation]{Claim}
\newtheorem{goal}[equation]{Goal}
\newtheorem{subclm}[equation]{Subclaim}
\newtheorem*{clm*}{Claim}
\theoremstyle{definition}
\newtheorem{df}[equation]{Definition}
\newtheorem{example}[equation]{Example}
\newcommand{\cproof}{\noindent{\it Proof of Claim.}\ } 
\newcommand{\cqed}{\hfill\rule{1.3mm}{3mm}}
\newcommand{\wec}[1]{{\mathbf{#1}}}  
\newcommand{\wrel}[1]{\;#1\;}     
\newcommand{\m}[1]{{\mathbf{\uppercase{#1}}}}
\DeclareMathOperator{\Con}{Con}
\newcommand{\cg}{\mathrm{Cg}}
\newcommand{\C}[1]{{\mathbf{\uppercase{#1}}}}
\newcommand{\Cg}[1]{{\cg}^{\m{#1}}}
\newcommand{\solv}{\stackrel{s}{\sim}}
\newcommand{\lhdlhd}{\mathrel{\lhd\!\!\lhd}}
\begin{document}

\title[]{Relative Maltsev definability \\ of some commutator properties}

\author{Keith A. Kearnes}
\address[Keith A. Kearnes]{Department of Mathematics\\
University of Colorado\\
Boulder, CO 80309-0395\\
USA}
\email{kearnes@colorado.edu}

\subjclass[2010]{Primary: 08B05;  Secondary: 03C05, 08A30}
\keywords{Commutativity, commutator, difference term, join distributivity,
  Maltsev condition, Taylor term, weak difference term}

\begin{abstract}
  We show that, when restricted to the class of varieties
  that have a Taylor term, several commutator properties
  are definable by Maltsev conditions.
\end{abstract}

\maketitle

\section{Introduction}\label{intro}
A {\it strong Maltsev condition} is a positive primitive sentence
in the language of clones. 
That is, it is a sentence expressing the existence of
some clone elements satisfying some equalities.
The name derives from the original example in \cite{maltsev}:
A.~I.~Maltsev proved that the class of varieties
whose members have permuting congruences 
is exactly the class of varieties whose
clones satisfy the p.p.\ clone sentence
\begin{equation} \label{sigma}
\tag*{$\sigma:$}
(\exists p)((p(x,x,y)\approx y)\;\&\; (p(x,y,y)\approx x)).
\end{equation}
A variety is said to satisfy a strong
Maltsev condition if its clone does.
In this article I will say that a class of varieties
is {\it definable by a
  strong Maltsev condition} if it is exactly the class
of all varieties that satisfy the strong Maltsev condition.

A {\it Maltsev condition} (without the word \emph{strong})
is a sequence
$\Sigma = (\sigma_n)_{n\in\omega}$ of successively weaker
strong Maltsev conditions ($\sigma_n\vdash \sigma_{n+1}$ for all $n$).
A variety $\mathcal V$ satisfies $\Sigma$ if its clone
satisfies $\sigma_n$ for some $n$. A class of varieties
is {\it definable by a Maltsev condition},
or is {\it Maltsev definable},
if it is exactly the class
of all varieties that satisfy some Maltsev condition $\Sigma$.
A class of varieties definable by a strong Maltsev condition $\sigma$
is also definable by the ordinary Maltsev condition
$\Sigma = (\sigma, \sigma, \ldots)$ that is a constant sequence.

In this article,
I investigate classes of varieties that are not Maltsev definable,
but which become Maltsev definable relative to some weak
`ground' Maltsev condition. That is,
suppose that $\mathscr{P}$ is a property of varieties.
Let $\Gamma$ be a Maltsev condition.
I will investigate some instances where
the class of varieties satisfying $\mathscr{P}$
is not Maltsev definable, but the class of varieties
satisfying both $\mathscr{P}$ and $\Gamma$ is Maltsev definable.
In symbols, I might write ${\mathscr P}+\Gamma = \Sigma$
to mean that, restricted to varieties satisfying
the ground condition $\Gamma$, the
class of varieties satisfying the condition ${\mathscr P}$
is definable by the Maltsev condition $\Sigma$.
I then say that the class of varieties
satisfying $\mathscr{P}$ is {\it Maltsev definable
relative to $\Gamma$}.
In this article, the `ground' Maltsev condition
will always be `the existence of a Taylor term'.
It is known, through Corollaries~5.2 and 5.3 of
\cite{taylor}, that an idempotent variety has a
Taylor term if and only if it contains no algebra
with at least $2$ elements in which every operation
interprets as a projection operation.
It is easy to see that this means exactly that
`the existence of a Taylor term' is the weakest
nontrivial idempotent Maltsev condition.
It is known that `the existence of a Taylor term'
is expressible as a strong Maltsev condition, see \cite{olsak}.

We investigate relative Maltsev definability for
the ten commutator properties $\mathscr{P}$
from the following list.
To understand these statements completely, it is necessary
to know the definitions of
$\C C(x,y;z)$ (Definition~\ref{centralizer_def}),
of $[x,y]$ (Definition~\ref{commutator_def}), 
and of (relative) right or left annihilators
(Definition~\ref{annihilator_def}).
For intuition about these statements, it may help
to remember that for the variety of groups ``the commutator operation''
coincides with the usual commutator operation of group theory
($[M,N]=[M,N]_{\textrm{group}}$)
while for the variety of commutative rings ``the commutator operation''
coincides with ideal product ($[I,J]=I\cdot J$).
``The centralizer relation'', $\C C(x,y;z)$,
coincides with the relation $[x,y]\leq z$ in both cases.
For commutative rings,
``the relative (right or left) annihilator of $J$ modulo $I$''
is the colon ideal $(I:J)=\{r\in R\;|\;rJ\subseteq I\}$, while ``the annihilator
of $J$'' is special case $(0:J)$.

\begin{itemize}
\setlength{\itemindent}{-10pt}
\item $[x,y]=[y,x]$
(Commutativity of the commutator.) 
\item $[x+y,z]=[x,z]+[y,z]$ (Left distributivity of the commutator.) 
\item $[x,y+z]=[x,y]+[x,z]$ (Right distributivity of the commutator.) 
\item $[x,y]=[x,y']\Longrightarrow [x,y]=[x,y+y']$
  (Right semidistributivity of the commutator.) 
\item Given $x$, there exists a largest $y$ such that
$[x,y]=0$
(Right annihilators exist.)
\item Given $x, z$, there exists a largest $y$ such that
$\C C(x,y;z)$
  (Relative right annihilators exist.) We write $(z:x)_R$
  for the relative right annihilator of $x$ modulo $z$,
  when it exists.
\item $\C C(x,y;z)\Longleftrightarrow \C C(y,x;z)$
(Symmetry of the centralizer relation in its first two places.) 
\item $\C C(x,y;z)\Longleftrightarrow [x,y]\leq z$
(The centralizer relation is determined by the commutator.)
\item $\C C(x,y;z)\;\&\; (z\leq z')\Longrightarrow \C C(x,y;z')$
(Stability of the centralizer relation under lifting in its third place.)
\item $\C C(x,y;z)\;\&\; (z\leq z'\leq x\cap y)\Longrightarrow \C C(x,y;z')$
(Weak stability of the centralizer relation under lifting in its third place.)
\end{itemize}

The main results of this article may be summarized as follows.
First, I explain why no one of the ten commutator properties
listed above is Maltsev definable [Section~\ref{examples}].
Then I explain why the following are equivalent
for varieties $\mathcal V$ with a Taylor term:
\begin{itemize}
\setlength{\itemindent}{-10pt}  
\item $\mathcal V$ is congruence modular.
\item The commutator is left distributive throughout $\mathcal V$.
\item The commutator is right distributive throughout $\mathcal V$.
\item The centralizer relation is symmetric in its first two places throughout $\mathcal V$.
\item Relative right annihilators exist throughout $\mathcal V$.  
\item The centralizer relation is determined by the commutator.  
\item The centralizer relation is stable under lifting in its third place.
\end{itemize}
See Theorems~\ref{main2} and \ref{main3}.
Also, for varieties $\mathcal V$ with a Taylor term,
the following are equivalent:
\begin{itemize}
\setlength{\itemindent}{-10pt}  
\item $\mathcal V$ has a difference term.
\item The commutator is commutative throughout $\mathcal V$.
\item Right annihilators exist throughout $\mathcal V$.
\item The commutator is right semidistributive throughout $\mathcal V$.
\item The centralizer relation is weakly stable under lifting in its third place.  
\end{itemize}
See Theorems~\ref{main1}, \ref{main1.5}, \ref{main4}.

A specific Maltsev
condition defining the class of congruence modular varieties 
may be found in \cite[Section~2]{day}.
A specific Maltsev
condition defining the class of varieties with a difference term
may be found in \cite[Section~4]{kissterm}.
Thus, Theorems~\ref{main1}, \ref{main2}, \ref{main1.5}, 
\ref{main3}, and \ref{main4} establish the 
Maltsev definability of all ten commutator
properties relative to the existence of a Taylor term.

The proofs of relative Maltsev definability for the
ten commutator properties identified will be called
the ``primary'' results of this article, and will be identified
as such when we prove them.
All other results are considered ``secondary'',
although some secondary results are as interesting
as the primary results. For example,
some nontrivial commutator-theoretic
facts are proved in Section~\ref{facts}
whose proofs do not require the
existence of a Taylor term.
In addition to this, 
a commutator-theoretic characterization
of the class of varieties that have a weak difference term
is established
in Theorem~\ref{characterization_of_weak}.

For background, I direct the reader to Section~2.4 of \cite{shape}
for a discussion of Maltsev conditions and
Section~2.5 of \cite{shape} for a discussion
of the properties of the centralizer relation
$\C C(x,y;z)$.
The most important elements from this
source will be reproduced below when needed.
In particular, it will be necessary
to know the definitions of
$\C C(x,y;z)$ (Definition~\ref{centralizer_def}),
of $[x,y]$ (Definition~\ref{commutator_def}), 
of (relative) right or left annihilators
(Definition~\ref{annihilator_def}),
of a difference term (Definition~\ref{left_right}),
and of a Taylor term (see
the opening paragraph of Section~\ref{main}).
\bigskip

\section{The ten commutator
  properties are not Maltsev definable}\label{examples}

The variety $\mathcal V$ of sets has the properties that
$\C C(\alpha,\beta;\delta)$ and $[\alpha,\beta]=0$ hold
for any $\alpha,\beta,\delta\in \Con(\m a)$,
$\m a\in {\mathcal V}$.
This implies that each of the following are true
in the variety of sets:

\begin{itemize}
\item $[x,y]=[y,x]$ 
\item $[x+y,z]=[x,z]+[y,z]$ 
\item $[x,y+z]=[x,y]+[x,z]$ 
\item $[x,y]=[x,y']\Longrightarrow [x,y]=[x,y+y']$
\item Given $x$, there exists a largest $y$ such that
$[x,y]=0$
\item Given $x, z$, there exists a largest $y$ such that
$\C C(x,y;z)$
\item $\C C(x,y;z)\Longleftrightarrow \C C(y,x;z)$ 
\item $\C C(x,y;z)\Longleftrightarrow [x,y]\leq z$
\item $\C C(x,y;z)\;\&\; (z\leq z')\Longrightarrow \C C(x,y;z')$
\item $\C C(x,y;z)\;\&\; (z\leq z'\leq x\cap y)\Longrightarrow \C C(x,y;z')$  
\end{itemize}

\noindent
If one of these properties $\mathscr P$
were Maltsev definable,
then, since the variety of sets is interpretable in any variety,
every variety would satisfy $\mathscr P$.
To prove that no one of these properties
is Maltsev definable it suffices
to exhibit varieties where the properties fail.

All the properties fail in the variety of semigroups
$\mathcal V =
{\sf H}{\sf S}{\sf P}(\mathbb Z_2 \times\mathbb S_2)$
where $\mathbb Z_2$ is the $2$-element group considered
as a semigroup and $\mathbb S_2$ is the $2$-element semilattice.
This is a variety of commutative semigroups
satisfying $x^3\approx x$.
In this variety, the term $T(x,y,z) = xyz$
is a Taylor term for $\mathcal V$
(see \cite[Definition~2.15]{shape} or
the opening paragraph of Section~\ref{main} below).
One can conclude this by noting that
$T$ is idempotent in $\mathcal V$
(since $T(x,x,x)\approx x^3\approx x$)
and satisfies $i$-th place Taylor identities
in $\mathcal V$ for every $i$
(since $T(x,y,z)\approx xyz\approx zxy\approx T(z,x,y)$).

From the main results of this article,
the fact that $\mathcal V$ has a Taylor term implies
that, if $\mathcal V$ had one of the commutator
properties listed above, then
$\mathcal V$ would have a difference term.
Then, from Theorem~\ref{diff_char} below,
any pentagon in a congruence lattice of a member
of $\mathcal V$ would have a `neutral' critical interval.
This is not the case, since $\Con(\mathbb Z_2\times \mathbb S_2)$
is a pentagon and its critical interval is abelian.
The lattice $\Con(\mathbb Z_2\times \mathbb S_2)$
is indicated in Figure~\ref{fig1}
with some congruences identified using the notation
``partition : congruence'' or 
``congruence : partition''.
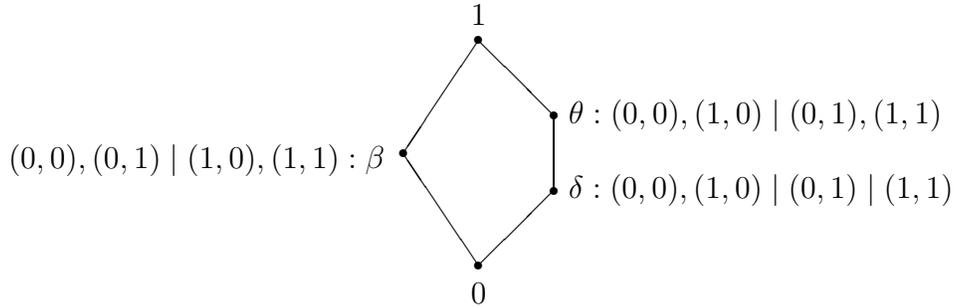
\begin{figure}[ht]
\begin{center}
\setlength{\unitlength}{1mm}
\begin{picture}(20,33)
\put(0,15){\circle*{1.2}}
\put(10,0){\circle*{1.2}}
\put(10,30){\circle*{1.2}}
\put(20,10){\circle*{1.2}}
\put(20,20){\circle*{1.2}}

\put(10,0){\line(-2,3){10}}
\put(10,0){\line(1,1){10}}
\put(10,30){\line(-2,-3){10}}
\put(10,30){\line(1,-1){10}}
\put(20,20){\line(0,-1){10}}

\put(-52.5,13){$(0,0), (0,1)\mid (1,0), (1,1) :\beta$}
\put(22,9){$\delta : (0,0), (1,0)\mid (0,1) \mid (1,1)$}
\put(22,19){$\theta: (0,0), (1,0)\mid (0,1), (1,1)$}
\put(9,32){$1$}
\put(9,-5){$0$}
\end{picture}
\medskip

\caption{\sc $\Con(\mathbb Z_2\times \mathbb S_2)\cong \m n_5$.}\label{fig1}
\end{center}
\end{figure}
\noindent
By hand computations, or by UACalc \cite{uacalc}, it can be shown that
$\C C(\theta,\theta;\delta)$. This means that the critical
interval of this copy of $\m n_5$ is abelian, hence
is \emph{not} neutral.

\section{Commutator theoretic results true for every variety}\label{facts}

In this section we prove some new facts about the commutator 
which we will need later in the paper.
They have been extracted from their rightful
places in the next section
and recorded here solely because the proofs
require no ground Maltsev condition among their hypotheses.

Our notation follows that of \cite{freese-mckenzie},
and we direct the reader to that source for fuller explanations.
For example, $\Con(\m a)$ is the congruence
lattice of $\m a$.
The meet (= intersection) and join 
of congruences $\alpha, \beta\in\Con(\m a)$
will be denoted $\alpha\cap \beta$ and $\alpha+\beta$.
We might write $u\stackrel{\alpha}{\equiv}v$
as an alternative to $(u,v)\in\alpha$.
When $\delta\leq \theta$, $I[\delta,\theta]$ denotes
the interval in $\Con(\m a)$ consisting of all
congruences between $\delta$ and $\theta$,
namely $I[\delta,\theta]=\{x\in\Con(\m a)\;|\;\delta\leq x\leq \theta\}$.
A five-element
sublattice of $\Con(\m a)$ is called a pentagon
if it is isomorphic to the lattice depicted in Figure~\ref{fig1}.
The critical interval of a pentagon is the interval that
corresponds to $I[\delta,\theta]$ in Figure~\ref{fig1}.
If $\alpha\in\Con(\m a)$, then
$\m a(\alpha)$ denotes the subalgebra of $\m a\times \m a$
whose universe is $\alpha$
(reference for notation: page 37 of \cite{freese-mckenzie}).
We use product notation for congruences of $\m a(\alpha)$,
so for $\beta,\gamma\in\Con(\m a)$ we let
$\beta_1 = \{((x,y),(z,w))\in A(\alpha)^2\;|\;(x,z)\in\beta\}$
  and
  $\gamma_2 = \{((x,y),(z,w))\in A(\alpha)^2\;|\;(y,w)\in\gamma\}$
  (reference: page 85 of \cite{freese-mckenzie}).\footnote{Observe
  that the definitions of the relations
  $\beta_1$ and $\gamma_2$ depend on the choice of $\alpha$.}
  Following \cite{freese-mckenzie}, we deviate from
  this convention by using $\eta_1$ and $\eta_2$
  in place of $0_1$ and $0_2$. E.g.,
$\eta_1 = \{((x,y),(z,w))\in A(\alpha)^2\;|\;x=z\}$.
We typically write $\beta_1\times \gamma_2$ for $\beta_1\cap \gamma_2$.
Given $\beta\in\Con(\m a)$, we let $\Delta_{\alpha,\beta}$ be the congruence
on $\m a(\alpha)$ generated by the $\beta$-diagonal relation
$
\{((x,x),(z,z))\in A(\alpha)^2\;|\;(x,z)\in\beta\}
$
(reference: page 37 of \cite[Definition~4.7]{freese-mckenzie}).
A fact we use when necessary is that
\begin{equation} \label{delta_gen}
  \Delta_{\alpha,\beta}\leq \beta_1\times \beta_2
\end{equation}  
always holds, since the generators of the congruence
$\Delta_{\alpha,\beta}$ lie in the congruence $\beta_1\times \beta_2$.

Next we define $S,T$-matrices and 
the centralizer relation.
The definitions are made for \emph{tolerances}
of an algebra $\m a$.
A tolerance on $\m a$ is a reflexive, symmetric, compatible
binary relation. (A \emph{congruence}
is a transitive tolerance.)

\begin{df}\label{matrices_def}
If $S$ and $T$ are tolerances on an algebra $\m a$, then an 
{\bf $S,T$-matrix}
is a $2\times 2$
matrix of elements of $\m a$ of the form 
\[
\left[\begin{array}{cc}
p&q\\
r&s
\end{array}\right]=
\left[\begin{array}{cc}
t(\wec{a},\wec{u})&t(\wec{a},\wec{v})\\
t(\wec{b},\wec{u})&t(\wec{b},\wec{v})
\end{array}\right]\]
where $t(\wec{x},\wec{y})$ is an $(m+n)$-ary term operation
of $\m a$, $\wec{a} \wrel{S} \wec{b}$, and
$\wec{u}\wrel{T}\wec{v}$.
The set of all $S,T$-matrices
of $\m a$ is denoted $M(S,T)$.
\end{df}

The symmetry of tolerances guarantees
that the set $M(S,T)$ is invariant under the operations
of interchanging rows or columns.

\begin{df}\label{centralizer_def}
  Let $S$ and $T$ be tolerances of an algebra
  $\m a$ and let $\delta$ be a congruence on $\m a$.
If $p\stackrel{\delta}{\equiv} q$ implies that $r\stackrel{\delta}{\equiv} s$
whenever
$$
\left[\begin{array}{cc}
p&q\\
r&s
\end{array}\right]\in M(S,T),
$$
then we say that 
{\bf $\C C(S,T;\delta)$ holds},
or {\bf $S$ centralizes $T$ modulo $\delta$}.
\end{df}

Many of the basic properties of the centralizer
relation are proved in Theorem~2.19 of \cite{shape}.
I copy the statement of that theorem here because its many
items will be referenced repeatedly throughout
this article.

\begin{thm}\label{basic_centrality}
Let $\m a$ be an algebra with tolerances 
$S, S', T, T'$
and congruences 
$\alpha, \alpha_i,\beta,\delta,\delta',\delta_j$.
The following are true.
\begin{enumerate}
\item[(1)] {\rm (Monotonicity in the first two variables)}
If $\C C(S,T;\delta)$ holds 
and $S'\subseteq S$,
$T'\subseteq T$, then $\C C(S',T';\delta)$ holds.
\item [(2)] 
$\C C(S,T;\delta)$ holds if and only if 
$\C C(\Cg a(S),T;\delta)$ holds.
\item [(3)] $\C C(S, T; \delta)$ holds if and only if 
$\C C(S, \delta\circ T\circ \delta; \delta)$ holds.
\item [(4)] If $T\cap \delta = T\cap\delta'$, then
$\C C(S,T;\delta)$ holds if and only if 
$\C C(S,T;\delta')$ holds.
\item [(5)] {\rm (Semidistributivity in the first 
variable)}
If $\C C(\alpha_i,T;\delta)$ holds
for all $i\in I$, then
$\C C(\bigvee_{i\in I}\alpha_i,T;\delta)$ holds.
\item[(6)] If $\C C(S,T;\delta_j)$ holds 
for all $j\in J$, then
$\C C(S,T;\bigwedge_{j\in J}\delta_j)$ holds.
\item [(7)] If $T\cap(S\circ (T\cap\delta)\circ S)
\subseteq\delta$, then $\C C(S,T;\delta)$ holds.
\item [(8)] 
If $\beta\cap(\alpha+(\beta\cap\delta))\leq\delta$,
then $\C C(\alpha,\beta;\delta)$ holds.
\item [(9)] Let $\m b$ be a subalgebra of $\m a$.
If $\C C(S, T; \delta)$ holds in $\m a$, 
then
$\C C(S|_{\m b}, T|_{\m b}; \delta|_{\m b})$ holds in $\m b$.
\item [(10)] 
If $\delta'\leq \delta$, then the relation
$\C C(S, T; \delta)$ holds in $\m a$ if and only if
$\C C(S/\delta', T/\delta'; \delta/\delta')$ 
holds in $\m a/\delta'$.
\end{enumerate}
\end{thm}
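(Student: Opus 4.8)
The plan is to reduce every item to one structural observation: since a tolerance is a compatible relation (a subuniverse of $\m a\times\m a$) that contains the diagonal, \emph{every row of an $S,T$-matrix lies in $T$ and every column lies in $S$}. Indeed, the matrix $\bigl[\begin{smallmatrix} t(\mathbf a,\mathbf u)& t(\mathbf a,\mathbf v)\\ t(\mathbf b,\mathbf u)& t(\mathbf b,\mathbf v)\end{smallmatrix}\bigr]$ is obtained by applying $t$ in $\m a\times\m a$ to the arguments $(a_1,a_1),\dots,(a_m,a_m),(u_1,v_1),\dots,(u_n,v_n)$, all of which lie in $T$, so $(p,q),(r,s)\in T$; applying $t$ in $\m a\times\m a$ instead to $(a_1,b_1),\dots,(a_m,b_m),(u_1,u_1),\dots,(u_n,u_n)\in S$ shows $(p,r),(q,s)\in S$. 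Combined with the invariance of $M(S,T)$ under interchanging rows or columns (noted after Definition~\ref{matrices_def}), this disposes of most items.

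Items (1) and (6) are then immediate: $M(S',T')\subseteq M(S,T)$ gives (1), and if the top row of a matrix lies in $\bigcap_j\delta_j$ then applying each $\C C(S,T;\delta_j)$ puts the bottom row into $\bigcap_j\delta_j$, giving (6). For (4), if the top row $(p,q)$ of an $S,T$-matrix lies in $\delta'$, then since $(p,q)\in T$ we have $(p,q)\in T\cap\delta'=T\cap\delta\subseteq\delta$, so $\C C(S,T;\delta)$ forces $(r,s)\in\delta$, and hence $(r,s)\in T\cap\delta=T\cap\delta'\subseteq\delta'$; the hypothesis is symmetric in $\delta$ and $\delta'$. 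For (7), if $(p,q)\in\delta$ then $(p,q)\in T\cap\delta$, and reading off the two columns gives $(r,s)\in S\circ(T\cap\delta)\circ S$ while also $(r,s)\in T$, so the hypothesis of (7) puts $(r,s)$ in $\delta$; item (8) is the case $S=\alpha$, $T=\beta$ of (7), using $\alpha\circ(\beta\cap\delta)\circ\alpha\subseteq\alpha+(\beta\cap\delta)$. Finally, for (9) a matrix of a subalgebra $\m b$ is also a matrix of $\m a$, with $\delta$-collapse detected identically, and for (10) a matrix of $\m a/\delta'$ is the image of a matrix of $\m a$ and conversely, with the hypothesis $\delta'\le\delta$ ensuring that $\delta$-collapse upstairs corresponds to $\delta/\delta'$-collapse downstairs.

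This leaves (2), (3), and (5), which call for short inductions. For (3), ``$\Leftarrow$'' is monotonicity since $T\subseteq\delta\circ T\circ\delta$, while for ``$\Rightarrow$'' one writes the $(\delta\circ T\circ\delta)$-related tuples as $\mathbf u\mathrel\delta\mathbf u'\mathrel T\mathbf v'\mathrel\delta\mathbf v$, observes that because $\delta$ is a congruence the four entries of the $S,T$-matrix built from $\mathbf u'$ and $\mathbf v'$ are $\delta$-close to those of the given matrix, and transfers the conclusion. For (5) and the forward implication of (2) (the reverse implication of (2) being an instance of (1)), the point is that $\Cg a(S)$ (resp.\ $\bigvee_i\alpha_i$) is the union of the relational composites $S\circ\cdots\circ S$ (resp.\ $\alpha_{i_1}\circ\cdots\circ\alpha_{i_k}$): one refines a chain witnessing $(\mathbf a,\mathbf b)\in\bigvee_i\alpha_i$ into $\mathbf a=\mathbf c_0,\mathbf c_1,\dots,\mathbf c_k=\mathbf b$ with $(\mathbf c_{j-1},\mathbf c_j)\in\alpha_{i_j}$ for each $j$ (padding with reflexive steps so the index sequence $i_1,\dots,i_k$ serves every coordinate of the tuples), stacks the matrices $\bigl[\begin{smallmatrix} t(\mathbf c_{j-1},\mathbf u)& t(\mathbf c_{j-1},\mathbf v)\\ t(\mathbf c_j,\mathbf u)& t(\mathbf c_j,\mathbf v)\end{smallmatrix}\bigr]\in M(\alpha_{i_j},T)$, and propagates membership in $\delta$ down the first column one matrix at a time. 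I expect this stacking argument --- specifically the bookkeeping needed to make the refinement uniform across coordinates --- to be the only genuine work; everything else follows quickly once the rows-in-$T$ / columns-in-$S$ observation is in hand.
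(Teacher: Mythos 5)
The paper does not prove Theorem~\ref{basic_centrality}; it cites it verbatim from Theorem~2.19 of \cite{shape} (``I copy the statement of that theorem here because its many items will be referenced repeatedly''). So there is no in-text proof to compare against, and your job was to supply the missing argument.

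Your proposal does that correctly, and along the standard lines. The rows-in-$T$ / columns-in-$S$ observation is exactly the right hinge: it makes (1), (4), (6), (7), (8), and (9) one-liners, and your reductions there are all sound --- in particular, in (7) you implicitly use the symmetry of $S$ when you turn the column memberships $(p,r),(q,s)\in S$ into the chain $r\mathrel{S}p\mathrel{(T\cap\delta)}q\mathrel{S}s$, which is fine since $S$ is a tolerance. For (3), the trick of replacing $\mathbf u,\mathbf v$ by the ``inner'' $T$-related tuples $\mathbf u',\mathbf v'$ and using that $\delta$ is a congruence to transfer $\delta$-collapse between the two matrices is the standard argument and it works. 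For (2) and (5), the chain-stacking induction is right, and you correctly flag the one piece of genuine bookkeeping: because different coordinates of $(\mathbf a,\mathbf b)$ may need chains of different lengths (and, in (5), chains through different $\alpha_i$'s), you must pad with reflexive steps to get a single sequence $\mathbf c_0,\dots,\mathbf c_k$ whose consecutive terms are uniformly $\alpha_{i_j}$-related (respectively $S$-related). With that done, propagating $\delta$-membership down the first column, one stacked matrix at a time, closes the argument. For (10) your phrasing ``a matrix of $\m a/\delta'$ is the image of a matrix of $\m a$'' glosses slightly over the need to \emph{choose} $S$- and $T$-related preimages of the downstairs tuples (being $(S/\delta')$-related downstairs does not say that arbitrary lifts are $S$-related upstairs), but the idea is clearly present and the fix is immediate; the rest of (10) --- that $\delta'\leq\delta$ makes $\delta$ exactly the preimage of $\delta/\delta'$ --- is used correctly in both directions. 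In short: correct, and in line with how this theorem is proved in the cited source.
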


The commutator operation is defined in terms of the centralizer
relation.

\begin{df}\label{commutator_def}
Let $S,T$ be tolerances on
an algebra $\m a$.
The commutator $[S,T]$ equals the least congruence $\delta$ on $\m a$
for which $\C C(S,T;\delta)$ holds.
\end{df}

According to Definition~\ref{commutator_def}, $[S,T]=0$ holds if
and only if $\C C(S,T;0)$ holds.
By Theorem~\ref{basic_centrality}~(5),
if $T$ is a tolerance on some algebra, 
then the join $\alpha$ of all congruences $\alpha_i$
satisfying $\C C(\alpha_i,T;0)$
satisfies $\C C(\alpha,T;0)$.
Using Theorem~\ref{basic_centrality}~(2)
we see that this join $\alpha$ is a congruence
and it is the largest congruence $x$ such that
$\C C(x,T;0)$ or equivalently the largest
$x$ such that $[x,T]=0$.
We denote this largest $x$ by $(0:T)$
and call it the annihilator of $T$.
If we want to emphasize that the annihilator $x=(0:T)$ appears
in the left variable of the commutator
in the equation $[x,T]=0$ we will add
a subscript $L$ to write $x=(0:T)_L$ 
and say that $(0:T)_L$ the left annihilator of $T$.
For the same reasons, given a tolerance
$T$ and a congruence $\delta\in\Con(\m a)$
there exists a largest tolerance $\alpha$ such that
$\C C(\alpha,T;\delta)$ which we denote $(\delta:T)$ or
$(\delta:T)_L$.
We call $(\delta:T)_L$ the \underline{relative} left annihilator
of $T$ \underline{modulo $\delta$}.
We record
the notation we have just introduced
in Definition~\ref{annihilator_def}.
Although the definitions from \cite{shape}
of the centralizer relation and the commutator operation
involve tolerance relations (reflexive, symmetric, compatible
binary relations) rather than congruence relations
(transitive tolerances), in this paper we henceforth
concentrate on the centralizer, commutator, and annihilators
of congruences only.

\begin{df}\label{annihilator_def}
Let $\m a$ be an algebra and let $\delta, \beta\in\Con(\m a)$
be congruences on $\m a$.
\begin{enumerate}
\item 
The largest congruence $\alpha\in\Con(\m a)$
satisfying $\C C(\alpha,\beta;0)$ is called 
the \emph{left annihilator of $\beta$} and it is
denoted $(0:\beta)_L$.
If there is a  largest congruence $\alpha\in\Con(\m a)$
satisfying $\C C(\beta,\alpha;0)$ it is called 
the \emph{right annihilator of $\beta$} and it is
denoted $(0:\beta)_R$.
\item  The \emph{relative left annihilator of $\beta$ modulo $\delta$},
  denoted $(\delta:\beta)_L$,
  is the largest congruence $\alpha\in\Con(\m a)$
  satisfying $\C C(\alpha,\beta;\delta)$.
If there is a  largest congruence $\alpha\in\Con(\m a)$
satisfying $\C C(\beta,\alpha;\delta)$ it is called 
the \emph{relative right annihilator of $\beta$ modulo $\delta$}, and it is
denoted $(\delta:\beta)_R$.
\end{enumerate}
\end{df}

\bigskip

The first new result in this section
shows that if a variety contains
an algebra
whose congruence lattice contains
a certain kind of
pentagon with an abelian critical interval,
then the variety contains an algebra
with
a pentagon satisfying other (usually stronger)
abelianness conditions.

\begin{figure}[ht]
\begin{center}
\setlength{\unitlength}{1mm}
\begin{picture}(20,33)
\put(0,15){\circle*{1.2}}
\put(10,0){\circle*{1.2}}
\put(10,30){\circle*{1.2}}
\put(20,10){\circle*{1.2}}
\put(20,20){\circle*{1.2}}

\put(10,0){\line(-2,3){10}}
\put(10,0){\line(1,1){10}}
\put(10,30){\line(-2,-3){10}}
\put(10,30){\line(1,-1){10}}
\put(20,20){\line(0,-1){10}}

\put(-4.5,13){$\beta$}
\put(22,9){$\delta$}
\put(22,19){$\theta$}
\put(9,32){$\alpha$}
\put(9,-5){$0$}
\end{picture}
\bigskip

\caption{\sc $\textrm{Con}(\m A)$ or $\textrm{Con}(\m B)$.}\label{fig2}
\end{center}
\end{figure}

\begin{thm} \label{better_pentagons}
{\rm  (Better pentagons)}
  Let $\mathcal V$ be an arbitrary
  variety and assume that $\mathcal V$ contains an algebra $\m a$
  with congruences $\beta, \theta,\delta$
  generating a pentagon, as shown in Figure~\ref{fig2}.
  Assume that $\C C(\theta,\theta;\delta)$ holds
  and $\C C(\beta,\theta;\delta)$ fails.\footnote{The assumption that
 ``$\C C(\beta,\theta;\delta)$ fails'' will always hold if
$\mathcal V$ has a Taylor term -- see Theorem~\ref{memoir_pentagon}.}  
There exists an algebra $\m b\in \mathcal V$
with congruences ordered as in Figure~\ref{fig2}
and satisfying $\C C(\alpha,\alpha;\beta)$
and $\C C(\theta,\theta;0)$.
\end{thm}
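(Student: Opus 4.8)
The plan is to manufacture $\m b$ from $\m a$ by the ``algebra of pairs'' construction followed by a carefully chosen quotient, using the congruence $\Delta_{\theta,\theta}$ as the vehicle that carries the abelianness of the critical interval into the new congruence lattice. I would first pass to $\m c:=\m a(\theta)$, the subalgebra of $\m a\times\m a$ with universe $\theta$, and record its relevant congruences: the projection kernels $\eta_1,\eta_2$ (with $\eta_1\cap\eta_2=0$), the congruences $\beta_1,\delta_1,\theta_1$ and $\beta_2,\delta_2,\theta_2$, and $\Delta:=\Delta_{\theta,\theta}$. The analogue of \eqref{delta_gen} gives $\Delta\le\theta_1\times\theta_2$, and inspecting generators shows $\Delta\vee\eta_1=\Delta\vee\eta_2=\theta_1=\theta_2$ and $\beta_1\cap\theta_1=\eta_1$ (the last because $\beta\cap\theta=0$ in $\m a$). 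The translation step is to express the hypotheses inside $\m c$: by the standard description of the centralizer relation in terms of $\Delta$-congruences (cf.\ \cite{freese-mckenzie}), ``$\C C(\theta,\theta;\delta)$'' becomes the containment $\Delta\cap\eta_1\le\delta_2$ (equivalently $\Delta\cap\eta_2\le\delta_1$), while ``$\C C(\beta,\theta;\delta)$ fails'' becomes the failure of the corresponding $\Delta_{\beta,\theta}$-containment; the latter is the one and only place the second hypothesis will enter.

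Next I would pin down the five congruences of the target pentagon. The algebra $\m c$ itself need not contain the right pentagon --- when $\C C(\theta,\theta;\delta)$ is ``very'' true the obvious five congruences of $\m c$ collapse to a diamond rather than a pentagon --- so I expect $\m b$ to be a quotient $\m c/\rho$ by a congruence $\rho$ assembled from $\Delta$, the $\eta_i$ and the $\delta_i$ (or, failing that, a suitable subalgebra of $\m c$), arranged so that the image of $\Delta$ plays the role of the critical congruence $\theta$ in Figure~\ref{fig2}, the image of $\eta_1$ (or of $\beta_1$) plays the role of $\beta$, a suitable congruence below $\Delta/\rho$ plays the role of $\delta$, and the bottom node is $0_{\m b}$. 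The substantive content of this step is to verify that these five images really do generate a pentagon: that no two of them coincide and, above all, that the relevant meet remains strictly below the relevant join. This is precisely where the failure of $\C C(\beta,\theta;\delta)$ gets used: it is exactly what prevents the would-be pentagon from flattening.

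It then remains to verify the two commutator properties for the pentagon. For $\C C(\alpha,\alpha;\beta)$ I would feed the containment $\Delta\cap\eta_1\le\delta_2$ into the $\Delta$-congruence description of the centralizer and transport it across the quotient map $\m c\to\m b$ via Theorem~\ref{basic_centrality}(10); informally, the containment says that $\Delta$ is ``close enough to the diagonal'' that the top congruence of $\Con(\m b)$ centralizes itself modulo $\beta$. Once $\C C(\alpha,\alpha;\beta)$ holds, the other property $\C C(\theta,\theta;0)$ is essentially free: in any pentagon ordered as in Figure~\ref{fig2} one has $\theta\le\alpha$ and $\theta\cap\beta=0$, so Theorem~\ref{basic_centrality}(1) yields $\C C(\theta,\theta;\beta)$, and then Theorem~\ref{basic_centrality}(4), applied with $\theta\cap\beta=\theta\cap 0$, upgrades this to $\C C(\theta,\theta;0)$.

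I expect the middle step to be the main obstacle: identifying the precise quotient (or subalgebra) so that the five distinguished congruences form an honest pentagon rather than a smaller modular configuration, and isolating exactly which inequality the failure of $\C C(\beta,\theta;\delta)$ must be invoked to rule out. By contrast, the $\Delta$-congruence bookkeeping in the translation and verification steps should be routine once the correct algebra is in hand.
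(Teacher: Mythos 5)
There is a genuine gap: the heart of the theorem --- exhibiting the five specific congruences and verifying that they form a non-degenerate pentagon with $0$ at the bottom --- is left unresolved, and you say so yourself (``I expect the middle step to be the main obstacle: identifying the precise quotient \dots''). The easy framing (``pass to an algebra of pairs, quotient appropriately, use the failure of $\C C(\beta,\theta;\delta)$ to rule out collapse'') is indeed the right shape of argument, but the entire content of the theorem lives in that unspecified middle step, so as written this is a plan, not a proof.

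A second, more technical concern is the choice of base algebra. You work in $\m c=\m a(\theta)$ with $\Delta=\Delta_{\theta,\theta}$. The paper works in $\m a(\beta)$ with $\Delta_{\beta,\theta}$ (and an auxiliary $\Delta_{\beta,\gamma}$ for a $\gamma$ chosen mid-proof), and this choice of base appears to be load-bearing. For example, to separate $\delta_1$ from $\delta_1\cap\Delta_{\beta,\theta}$ the paper picks a pair $(u,v)\in\beta-\theta$, which is an element of $\m a(\beta)$ but not of $\m a(\theta)$; and the failing $\beta,\theta$-matrix from the hypothesis produces the witness $((r,p),(s,q))\in\Delta_{\beta,\theta}\cap\delta_2\cap\theta_1\setminus\delta_1$, whose underlying pairs $(r,p),(s,q)$ lie in $\beta$, not $\theta$. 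If you insist on $\m a(\theta)$ you would instead be handed a pair in $\Delta_{\theta,\beta}\cap\beta_1\cap\beta_2$, and it is not clear that this lets you separate the congruences needed for a pentagon. So before the middle step can be filled in, you would need to check that $\m a(\theta)$ even supports the construction, or else switch to $\m a(\beta)$.

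Your closing observation is correct and is a nice shortcut that the paper does not state in quite this form: once $\C C(\alpha,\alpha;\beta)$ is in hand, $\theta\le\alpha$ and Theorem~\ref{basic_centrality}(1) give $\C C(\theta,\theta;\beta)$, and $\theta\cap\beta=0=\theta\cap 0$ with Theorem~\ref{basic_centrality}(4) gives $\C C(\theta,\theta;0)$, so the second centrality is automatic. But that is the routine endgame; the gap is in the construction that precedes it.
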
  

\begin{proof}
  Let $\m a$ have congruences $\beta, \theta, \delta$
  with the properties described.
  We will find a pentagon of the desired type
  in the congruence lattice of the algebra $\m a(\beta)$.

  The desired pentagon will be the one depicted in
  Figure~\ref{fig3}.
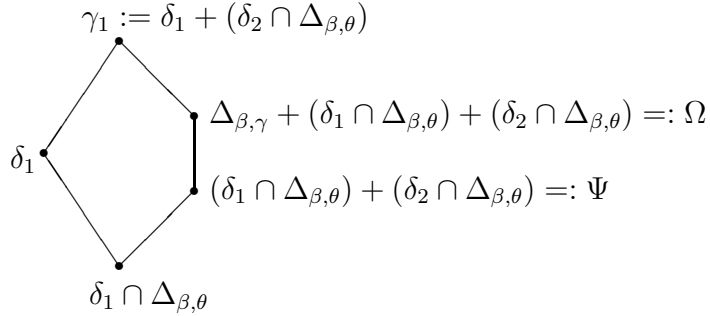
\begin{figure}[ht]
\begin{center}
\setlength{\unitlength}{1mm}

\begin{picture}(20,38)
\put(-20,0){%
\put(0,15){\circle*{1.2}}
\put(10,0){\circle*{1.2}}
\put(10,30){\circle*{1.2}}
\put(20,10){\circle*{1.2}}
\put(20,20){\circle*{1.2}}

\put(10,0){\line(-2,3){10}}
\put(10,0){\line(1,1){10}}
\put(10,30){\line(-2,-3){10}}
\put(10,30){\line(1,-1){10}}
\put(20,20){\line(0,-1){10}}

\put(-4.5,13){$\delta_1$}
\put(22,9){$(\delta_1\cap \Delta_{\beta,\theta})+(\delta_2\cap\Delta_{\beta,\theta})=:\Psi$}
\put(22,19){$\Delta_{\beta,\gamma}+(\delta_1\cap \Delta_{\beta,\theta})+(\delta_2\cap\Delta_{\beta,\theta})=:\Omega$}
\put(5,32){$\gamma_1:=\delta_1+(\delta_2\cap\Delta_{\beta,\theta})$}
\put(6,-5){$\delta_1\cap\Delta_{\beta,\theta}$}
}
\end{picture}
\bigskip

\caption{\sc A sublattice of $\textrm{Con}(\m A(\beta))$.}\label{fig3}
\end{center}
\end{figure}
The congruence $\gamma$ that appears in this
figure will be defined in the course of the proof.

To establish that the congruences
indicated form a pentagon with the required properties
we will use the fact that $\C C(\beta,\theta;\delta)$
fails. According to Definition~\ref{centralizer_def},
this assumption yields 
a $\beta,\theta$-matrix
\[
\begin{bmatrix}p&q\\r&s
\end{bmatrix}=
\begin{bmatrix}
t(\wec{a},\wec{c})&t(\wec{a},\wec{d})\\
t(\wec{b},\wec{c})&t(\wec{b},\wec{d})
\end{bmatrix}
\]
where $(a_i,b_i)\in\beta$, $(c_j,d_j)\in\theta$,
$(p,q)\in\delta$, and $(r,s)\in\theta-\delta$.
This implies that
$((r,p), (s,q))\in \Delta_{\beta,\theta}\cap \delta_2$,
$((r,p), (s,q))\in \Delta_{\beta,\theta}\cap \theta_1$,
but 
$((r,p), (s,q))\notin \delta_1$.

The comparabilities indicated in the chain
on the left side
in Figure~\ref{fig3}, namely
\begin{equation} \label{chain1}
\delta_1\cap\Delta_{\beta,\theta}\;\leq\; \delta_1\;\leq\; \delta_1+(\delta_2\cap\Delta_{\beta,\theta}),
\end{equation}
are obvious:
the middle congruence
$\delta_1$ is a meetand on the left of \eqref{chain1}
and a joinand
on the right of \eqref{chain1}.
To show that the middlemost and the rightmost elements of
chain \eqref{chain1} are distinct, observe that 
$((r,p), (s,q))$ lies in the
rightmost congruence of \eqref{chain1}
but not in the middlemost.
To show that the middlemost and leftmost elements of
chain \eqref{chain1} are distinct,
choose $(u,v)\in \beta-\theta$.
This is possible since $\beta\not\leq\theta$ (see
Figure~\ref{fig2}).
Both pairs $(u,u), (u,v)$ belong to $\m a(\beta)$
and $((u,u), (u,v))\in\delta_1$.
We have $\Delta_{\beta,\theta}\leq \theta_1\times \theta_2$
by \eqref{delta_gen}, 
so $\delta_1\cap \Delta_{\beta,\theta}\leq \delta_1\times \theta_2$.
However the pair 
$((u,u), (u,v))$ does not belong to
$\delta_1\times \theta_2$, since $(u,v)\notin\theta$.
This shows that
$((u,u), (u,v))$ is contained in the middlemost
element of the chain in \eqref{chain1}, but not the leftmost.

Let us use 
``$\gamma_1$'' to denote 
the top congruence
$\delta_1+\delta_2\cap\Delta_{\beta,\theta}$
in Figure~\ref{fig3}.
Here, note that since $\delta_1+\delta_2\cap\Delta_{\beta,\theta}$
is strictly above $\delta_1$, it
is indeed of the form $\gamma_1$ for some $\gamma\in\Con(\m a)$
satisfying $\gamma > \delta$. Moreover,
since $\gamma_1=\delta_1+(\delta_2\cap\Delta_{\beta,\theta})
\stackrel{\eqref{delta_gen}}{\leq}
  \delta_1+(\delta_2\cap (\theta_1\times \theta_2))=
  \delta_1+(\theta_1\times \delta_2)=\theta_1$ we
have that
\begin{equation} \label{gamma_location}
\delta < \gamma \leq \theta
\end{equation}
in $\Con(\m a)$.
Note also that 
\begin{equation} \label{gamma_1_location}
\Delta_{\beta,\gamma}\leq \gamma_1\times\gamma_2\leq \gamma_1
\end{equation}
in $\Con(\m a(\beta))$.

We have a chain on the right side of
Figure~\ref{fig3}, namely
$$
\begin{array}{rll}
\delta_1\cap\Delta_{\beta,\theta}&\leq 
(\delta_1\cap \Delta_{\beta,\theta})+(\delta_2\cap\Delta_{\beta,\theta})&
(\delta_2\cap\Delta_{\beta,\theta})\\
&\leq
\Delta_{\beta,\gamma}
+(\delta_1\cap \Delta_{\beta,\theta})+(\delta_2\cap\Delta_{\beta,\theta})\hphantom{AAA}&\Delta_{\beta,\gamma}\\
&\leq
\delta_1+\delta_2\cap\Delta_{\beta,\theta}=\gamma_1& \delta_1.
\end{array}
$$
One sees that, on each of these lines, the congruence
immediately to the right
of the ``$\leq$'' is obtained from the
preceding congruence in the chain by joining
the additional congruence indicated on the same line at the far right.
The claim just made involves three assertions, the first two
of which are formally true. For the third claim,
which is the claim that we obtain $\gamma_1$ when we join
$\delta_1$ to $\Omega=\Delta_{\beta,\gamma}
+(\delta_1\cap \Delta_{\beta,\theta})+(\delta_2\cap\Delta_{\beta,\theta})$,
we use
(\ref{gamma_1_location}) in the last step of the following computation.

$$
\begin{array}{rl}
\delta_1+\Omega&=  
\delta_1+(\Delta_{\beta,\gamma}
+(\delta_1\cap \Delta_{\beta,\theta})+(\delta_2\cap\Delta_{\beta,\theta}))\\
&= \Delta_{\beta,\gamma}
+(\delta_1+(\delta_1\cap \Delta_{\beta,\theta}))+(\delta_2\cap\Delta_{\beta,\theta}))\\
&=
 \Delta_{\beta,\gamma}
+(\delta_1+\delta_2\cap\Delta_{\beta,\theta})\\
&=
 \Delta_{\beta,\gamma}
+\gamma_1\\
&=\gamma_1.\\
\end{array}
$$

Next, we argue that the comparability 
\begin{equation} \label{chain2}
\Psi:=(\delta_1\cap \Delta_{\beta,\theta})+(\delta_2\cap\Delta_{\beta,\theta})
  \leq \Delta_{\beta,\gamma}+(\delta_1\cap \Delta_{\beta,\theta})+(\delta_2\cap\Delta_{\beta,\theta}) =:\Omega,
\end{equation}
is strict.

\begin{clm}\label{strict}
$\Psi<\Omega$.
\end{clm}

\noindent
{\it Proof of Claim~\ref{strict}.}
If $(u,v)\in\gamma-\delta$, then 
the pair $((u,u),(v,v))\in\Delta_{\beta,\gamma}$
will belong to $\Omega$,
since $\Delta_{\beta,\gamma}$ is a summand of $\Omega$.
We shall argue that $((u,u),(v,v))\notin\Psi$.
Our path will be to show that
the $\Psi$-class of $(u,u)$ is contained
in the $\delta_1\times \delta_2$-class of $(u,u)$.
This will suffice, since we have $(u,v)\notin\delta$
and therefore 
$((u,u),(v,v))\notin\delta_1\times \delta_2$,
so we will be able to derive that
$((u,u),(v,v))\notin\Psi$.

Since Figure~\ref{fig2} is a pentagon
in which $\beta\cap (\theta+(\beta\cap\delta))\leq \delta$ holds,
Theorem~\ref{basic_centrality}~(8) guarantees
that the relation $\C C(\theta,\beta;\delta)$ holds.
This property can
be restated in this way:
if $D$ is the set of pairs
$(x,y)\in \m a(\beta)$ that satisfy $(x,y)\in\delta$,
then the subset $D \subseteq \m a(\beta)$
is a union of $\Delta_{\beta,\theta}$-classes.
In other words, 
$(e,f)\in \delta$
and
$(e,f)\stackrel{\Delta_{\beta,\theta}}{\equiv} (g,h)$ 
jointly imply
$(g,h)\in \delta$.
We can apply this information to the intersection congruence
$\delta_1\cap \Delta_{\beta,\theta}$
to derive that if
\begin{itemize}
\item $(e,f)\stackrel{\Delta_{\beta,\theta}}{\equiv} (g,h)$,
\item $(e,f)\in\delta$, and we also have
\item $(e,g)\in\delta$, 
\end{itemize}
then
$(g,h), (f,h)\in\delta$.
(Here $(f,h)\in\delta$ follows from
$f\stackrel{\delta}{\equiv}e\stackrel{\delta}{\equiv}g\stackrel{\delta}{\equiv}h$.)
In conclusion,  if
$(e,f)\in\delta$ and
$((e,f),(g,h)) \in\delta_1\cap \Delta_{\beta,\theta}$,
then necessarily
$(g,h)\in\delta$ and
$((e,f),(g,h)) \in\delta_2\cap \Delta_{\beta,\theta}$.
This establishes that,
when $(e,f)\in\delta$,
the $\delta_1\cap \Delta_{\beta,\theta}$-class
of $(e,f)$ agrees with the
$\delta_2\cap \Delta_{\beta,\theta}$-class of $(e,f)$,
and hence agrees with the $\Psi$-class of $(e,f)$
($\Psi$ is the join of
$\delta_1\cap \Delta_{\beta,\theta}$ and
$\delta_2\cap \Delta_{\beta,\theta}$).
Therefore, when $(e,f)\in\delta$, the $\Psi$-class of $(e,f)$
is contained in the $\delta_1\times\delta_2$-class
of $(e,f)$.
By applying this reasoning to $(e,f)=(u,u)$
we see that
$((u,u),(v,v))\notin\Psi$, since
the $\delta_1\times\delta_2$-class of $(u,u)$
does not contain $(v,v)$.
\cqed
\bigskip

What remains to do to establish
that our congruences form a pentagon is to show that 
(i) $\delta_1+\Psi = \gamma_1$ and
(ii) $\delta_1\cap \Omega=\delta_1\cap\Delta_{\beta,\theta}$.
For the first of these we calculate that
\[
\begin{array}{rl}
  \delta_1+\Psi &= (\delta_1+(\delta_1\cap \Delta_{\beta,\theta}))+(\delta_2\cap\Delta_{\beta,\theta})\\
&= \delta_1+(\delta_2\cap\Delta_{\beta,\theta})\\  
  &=\gamma_1.
  \end{array}
\]
For~(ii), we use the fact $\gamma\leq \theta$ from
\eqref{gamma_location}
to derive that
$\Delta_{\beta,\gamma}\leq \Delta_{\beta,\theta}$.
Therefore all summands in
\[
\Omega = \Delta_{\beta,\gamma}+(\delta_1\cap \Delta_{\beta,\theta})+(\delta_2\cap\Delta_{\beta,\theta})
\]
lie below $\Delta_{\beta,\theta}$. Since one of the summands is
$\delta_1\cap \Delta_{\beta,\theta}$, we obtain 
\[
\delta_1\cap \Delta_{\beta,\theta}\leq \Omega\leq \Delta_{\beta,\theta}.
\]
If we meet this chain throughout with $\delta_1$ we obtain
\[
\delta_1\cap \Delta_{\beta,\theta}\leq \delta_1\cap \Omega\leq \delta_1\cap \Delta_{\beta,\theta},
\]
or $\delta_1\cap \Omega= \delta_1\cap \Delta_{\beta,\theta}$,
which is what (ii) asserts.

We have established the pentagon shape,
so what is left is to establish that the
asserted centralities hold.
In \eqref{gamma_location}
we showed above that $\delta < \gamma\leq \theta$
in $\Con(\m a)$.
Since $\C C(\theta,\theta;\delta)$ holds in
$\Con(\m a)$, we get
$\C C(\theta_1,\theta_1;\delta_1)$ in $\Con(\m a(\beta))$
by
Theorem~\ref{basic_centrality}~(10)
and the Correspondence Theorem.
We then get
$\C C(\gamma_1,\gamma_1;\delta_1)$
by monotonicity
(Theorem~\ref{basic_centrality}~(1)).
This shows that the interval $I[\delta_1,\gamma_1]$
between $\delta_1$ and the top of the pentagon, $\gamma_1$, is abelian.
Using this, we can derive that the interval
$I[\delta_1\cap\Delta_{\beta,\theta},\Omega]$
between
$\Omega = \Delta_{\beta,\gamma}+(\delta_1\cap \Delta_{\beta,\theta})+(\delta_2\cap\Delta_{\beta,\theta})$
and the bottom of the pentagon is also abelian, as follows:
By the facts that $\C C(\gamma_1,\gamma_1;\delta_1)$
and $\Omega\leq \gamma_1$, we can get
$\C C(\Omega,\Omega;\delta_1)$.
We always have 
$\C C(\Omega,\Omega;\Omega)$
according to
Theorem~\ref{basic_centrality}~(8).
Then, by
Theorem~\ref{basic_centrality}~(6),
we get $\C C(\Omega,\Omega;\delta_1\cap\Omega)$,
which is the claim that the interval between
$\Omega$ and the bottom of the pentagon is abelian.
This completes the proof of the theorem
up to relabeling the congruences in Figure~\ref{fig3}.
(In particular, since the bottom element of
Figure~\ref{fig3} is labeled $0$, we should factor
our algebra and take $\m b = \m a(\beta)/(\delta_1\cap \Delta_{\beta,\theta})$.)
\end{proof}

\begin{lm} \label{asymmetry}
Assume that $\m a$ is an algebra whose
commutator operation is not commutative.
Some quotient $\m b$ of $\m a$ will have congruences
$\alpha,\beta\in\Con(\m b)$
such that $[\beta,\alpha]=0<[\alpha,\beta]$.
\end{lm}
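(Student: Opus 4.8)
The plan is to obtain $\m b$ as a single quotient of $\m a$, namely $\m b := \m a/\delta$ where $\delta$ is the commutator $[\beta,\alpha]$ for a well-chosen witnessing pair $\alpha,\beta$, and then to read off both halves of the conclusion, $[\beta/\delta,\alpha/\delta]=0$ and $[\alpha/\delta,\beta/\delta]>0$, from the way the centralizer relation transfers between $\m a$ and $\m a/\delta$ (Theorem~\ref{basic_centrality}~(10)) together with the definition of the commutator as a least congruence (Definition~\ref{commutator_def}).

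First I would unwind the hypothesis: since the commutator of $\m a$ is not commutative there are $\alpha,\beta\in\Con(\m a)$ with $[\alpha,\beta]\neq[\beta,\alpha]$, and after possibly interchanging the names of $\alpha$ and $\beta$ I may assume $[\alpha,\beta]\not\leq[\beta,\alpha]$. Set $\delta:=[\beta,\alpha]$. Next I would check $\delta\leq\alpha\cap\beta$: applying Theorem~\ref{basic_centrality}~(8) once with third argument $\alpha$ and once with third argument $\beta$ gives $\C C(\beta,\alpha;\alpha)$ and $\C C(\beta,\alpha;\beta)$, hence $\delta=[\beta,\alpha]\leq\alpha$ and $\delta\leq\beta$ by Definition~\ref{commutator_def}. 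Consequently $\alpha/\delta$ and $\beta/\delta$ are genuine congruences of $\m b=\m a/\delta$. Now $\C C(\beta,\alpha;\delta)$ holds in $\m a$ (again by Definition~\ref{commutator_def}, as $\delta=[\beta,\alpha]$), so Theorem~\ref{basic_centrality}~(10) with $\delta'=\delta$ gives $\C C(\beta/\delta,\alpha/\delta;0)$ in $\m b$, i.e.\ $[\beta/\delta,\alpha/\delta]=0$. For the other half I would argue by contradiction: if $[\alpha/\delta,\beta/\delta]=0$, i.e.\ $\C C(\alpha/\delta,\beta/\delta;0)$ holds in $\m a/\delta$, then reading Theorem~\ref{basic_centrality}~(10) in the reverse direction yields $\C C(\alpha,\beta;\delta)$ in $\m a$, whence $[\alpha,\beta]\leq\delta=[\beta,\alpha]$ by Definition~\ref{commutator_def}, contradicting the choice of $\alpha,\beta$. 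Hence $[\alpha/\delta,\beta/\delta]>0$, and $\m b$ together with the congruences $\alpha/\delta$ and $\beta/\delta$ (taking the roles of $\alpha$ and $\beta$ in the statement) is as required.

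There is no real obstacle here; the proof is short. The one point that needs a moment's attention is the preliminary observation $\delta\leq\alpha\cap\beta$, which is exactly what makes $\alpha/\delta$ and $\beta/\delta$ meaningful and lets the quotient correspondence of Theorem~\ref{basic_centrality}~(10) be applied with $\delta'=\delta$; apart from that, one only has to keep straight the two directions of the biconditional in that item.
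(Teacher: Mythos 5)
Your proposal is correct and takes essentially the same approach as the paper: set $\delta=[\beta,\alpha]$, note $\delta\leq\alpha\cap\beta$, pass to the quotient $\m a/\delta$, and read off both halves of the conclusion from the centralizer-quotient correspondence (Theorem~\ref{basic_centrality}~(10)). The only cosmetic difference is that the paper packages this via the ``relative commutator'' notation $[\cdot,\cdot]_{\varepsilon}$, which it introduces here because it is reused in later proofs, while you argue directly.
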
  

\begin{proof}
For this proof (and later proofs)
we will adopt ``relative commutator'' notation first
introduced above \cite[Theorem~4.22]{order-theoretic}.
This notation is useful for discussing
the relationship between the commutator operation
in $\m a$ and the commutator operations on quotients of $\m a$.
Define 
\[
[\alpha,\beta]_{\varepsilon}:=
\bigcap \{\gamma\;|\; (\gamma\geq \varepsilon)\;\textrm{and}\;\C C(\alpha,\beta;\gamma)\}.
\]
It is easy to see from Theorem~\ref{basic_centrality}~(10)
that this notation has the property that
if $\varepsilon\leq \alpha, \beta$, then
$[\alpha/\varepsilon,\beta/\varepsilon]=
[\alpha,\beta]_{\varepsilon}/\varepsilon$,
so the ordinary (= unsubscripted) commutator operation $[-,-]$
on $\Con(\m a/\varepsilon)$
is reflected by the operation
$[-,-]_{\varepsilon}$ on the interval
$I[\varepsilon,1]$ of $\Con(\m a)$.

If $\m a\in \mathcal V$ has noncommutative
commutator, then it has congruences
$\gamma,\delta\in \Con(\m a)$ such that
$[\gamma,\delta]\not\leq [\delta,\gamma]$.
Set $\varepsilon=[\delta,\gamma]$.
This is a congruence which lies
below both $\gamma$ and $\delta$.
The fact that $[\gamma,\delta]\not\leq [\delta,\gamma]=\varepsilon$
implies that
$\C C(\gamma,\delta;\varepsilon)$ fails, hence
$[\gamma,\delta]_{\varepsilon}\neq \varepsilon=[\delta,\gamma]$.
But $[\gamma,\delta]_{\varepsilon}\geq \varepsilon$
from the definition of the relative commutator notation.
Hence we have 
$[\delta,\gamma]_{\varepsilon} = [\delta,\gamma] < [\gamma,\delta]_{\varepsilon}$.
This means that the algebra $\m a/\varepsilon$
has congruences
$\delta/\varepsilon, \gamma/\varepsilon$ satisfying
$[\delta/\varepsilon,\gamma/\varepsilon]
= 0 < [\gamma/\varepsilon,\delta/\varepsilon]$.
By changing notation to work modulo $\varepsilon$
we have a quotient of $\m a$ with congruences
$\alpha = \gamma/\varepsilon, \beta = \delta/\varepsilon$
satisfying $[\beta,\alpha]=0<[\alpha,\beta]$.
\end{proof}

We will use Lemma~\ref{asymmetry}
in the next result where we connect
left and right
distributivity of the commutator
with commutativity of the commutator.

\begin{thm} \label{distributive_thm}
Let $\mathcal V$ be an arbitrary variety.
\begin{enumerate}
\item If the commutator is left distributive throughout
  $\mathcal V$,
  \[(\forall x, y, z)\;\;[x+y,z]=[x,z]+[y,z],\]
  then it is also commutative throughout $\mathcal V$
  \[(\forall x, y)\;\;[x,y]=[y,x].\]
\item If the commutator is right distributive throughout $\mathcal V$,
  \[(\forall x, y, z)\;\;[x,y+z]=[x,y]+[x,z],\]
  then the commutator
  satisfies the following ``partial commutativity'' on comparable pairs
  of congruences.
 \[(\forall x, y)\;\;(y\leq x) \Rightarrow [x,y]\leq [y,x].\]
\end{enumerate}
\end{thm}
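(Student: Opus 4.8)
The plan is to prove both statements by contradiction, starting from a common normalization based on Lemma~\ref{asymmetry}. Suppose first that the commutator is left distributive throughout $\mathcal V$ but not commutative; then some algebra in $\mathcal V$ has a noncommutative commutator, so Lemma~\ref{asymmetry} produces $\m b\in\mathcal V$ with congruences $\alpha,\beta$ satisfying $[\beta,\alpha]=0<[\alpha,\beta]$, and left distributivity persists in $\m b$ and in every algebra built from $\m b$ inside $\mathcal V$. For part~(2), suppose right distributivity holds throughout $\mathcal V$ but some $\m b'\in\mathcal V$ has congruences $\beta\le\alpha$ with $[\alpha,\beta]\not\le[\beta,\alpha]$; running the argument in the proof of Lemma~\ref{asymmetry} with $\varepsilon:=[\beta,\alpha]$ --- here $\varepsilon\le\beta\le\alpha$ --- produces the quotient $\m b:=\m b'/\varepsilon\in\mathcal V$, with congruences $\beta\le\alpha$ (names kept) satisfying $[\beta,\alpha]=0<[\alpha,\beta]$ and right distributivity still in force. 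In either case I would write $\mu:=[\alpha,\beta]>0$ and record that $\mu\le\alpha\cap\beta$, that $[\mu,\alpha]\le[\beta,\alpha]=0$ by monotonicity of the commutator in its first variable (as $\mu\le\beta$), and hence that $[\mu,\mu]\le[\mu,\alpha]=0$; so $\mu$ is an abelian congruence and $\C C(\mu,\alpha;0)$ holds. It remains, in both parts, to contradict $\mu>0$ by deriving $[\alpha,\beta]=0$.

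For part~(1) I would pass to $\m b(\beta)\in\mathcal V$ and spend left distributivity there, using that $\eta_1\vee\eta_2=\beta_1=\beta_2$ and $\eta_1\cap\eta_2=0$ on $\m b(\beta)$. A coordinatewise inspection of $S,T$-matrices (operations act componentwise on $\m b(\beta)\le\m b\times\m b$) shows that each of the crossed centralities $\C C(\eta_2,\alpha_1;0)$ and $\C C(\eta_1,\alpha_2;0)$ on $\m b(\beta)$ is equivalent to $\C C(\beta,\alpha;0)$ on $\m b$, which holds; hence $[\eta_2,\alpha_1]=[\eta_1,\alpha_2]=0$, and then $[\eta_1,\alpha_1\cap\alpha_2]=[\eta_2,\alpha_1\cap\alpha_2]=0$ by monotonicity in the second variable. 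Left distributivity then gives $[\beta_1,\alpha_1\cap\alpha_2]=[\eta_1\vee\eta_2,\alpha_1\cap\alpha_2]=0$, and since $\Delta_{\beta,\alpha}\le\alpha_1\cap\alpha_2$ (the reasoning behind \eqref{delta_gen}, now applied to the $\alpha$-diagonal of $\m b(\beta)$), also $\C C(\beta_1,\Delta_{\beta,\alpha};0)$ holds on $\m b(\beta)$. The last step is to convert this sharpened centrality into $\C C(\alpha,\beta;0)$ on $\m b$: an $\alpha,\beta$-matrix $\left[\begin{smallmatrix}p&q\\r&s\end{smallmatrix}\right]$ of $\m b$ has $p\mathrel\beta q$ and $r\mathrel\beta s$ and produces a pair $\bigl((p,q),(r,s)\bigr)\in\Delta_{\beta,\alpha}$ of elements of $\m b(\beta)$, so $\m b(\beta)$ equipped with $\beta_1$ and $\Delta_{\beta,\alpha}$ is exactly where the reversed commutator $[\alpha,\beta]$ is detected. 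Carrying out this conversion --- the point at which left-additivity is really consumed --- is the step I expect to be the main obstacle, since it demands a precise account of how a witness to the failure of $\C C(\alpha,\beta;0)$ is coded inside $\m b(\beta)$ by $\Delta_{\beta,\alpha}$, $\eta_1$, $\eta_2$ and their joins.

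For part~(2) the comparability $\beta\le\alpha$ is exploited at the level of individual matrices: any $\alpha,\beta$-matrix, given by a term $t$ with $\alpha$-related rows and $\beta$-related --- hence also $\alpha$-related --- columns, transposes, via $t'(\wec x,\wec y):=t(\wec y,\wec x)$, to a $\beta,\alpha$-matrix built from the same four entries rearranged. On the lattice side, right distributivity reduces the target $[\alpha,\beta]=0$ to showing $[\alpha,\theta]=0$ for every principal congruence $\theta\le\beta$; and since then $[\theta,\alpha]\le[\beta,\alpha]=0$ with $\theta$ abelian, the hypothesis to work from becomes: $\theta=\Cg b(c,d)$ is an abelian congruence, $\theta\le\alpha$, $\C C(\theta,\alpha;0)$ holds, and one wants $\C C(\alpha,\theta;0)$. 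With $\theta$ principal, $\C C(\alpha,\theta;0)$ unwinds into a statement about polynomial operations of $\m b$ in a single $\theta$-variable, and I would finish by feeding $\C C(\theta,\alpha;0)$ through the matrix transposition above. The main obstacle here is the interface between that transposition --- a purely local move relating, on its own, only the centralities attached to a single matrix --- and the global join-identity supplied by right distributivity; making the reduction to principal $\theta$ interact cleanly with the transposition is the technical heart, and is also where the argument should genuinely require $\beta\le\alpha$ (right distributivity alone not sufficing, without a Taylor term, to force full commutativity).
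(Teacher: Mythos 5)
Your normalization via Lemma~\ref{asymmetry} matches the paper's setup, but the construction that follows diverges from the paper's, and in part~(1) it has a flaw that is more serious than the gap you flag. You pass to $\m b(\beta)$ and try to derive the contradiction $[\alpha,\beta]=0$; the paper instead passes to $\m a(\alpha)$ and directly exhibits a failure of distributivity by computing commutators restricted to the diagonal subuniverse $D\le\m a(\alpha)$.

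The problem in your part~(1) is that the point where you spend left distributivity is vacuous. Your reductions up to $\C C(\eta_1,\alpha_1\cap\alpha_2;0)$ and $\C C(\eta_2,\alpha_1\cap\alpha_2;0)$ on $\m b(\beta)$ are fine, but from there $\C C(\eta_1+\eta_2,\alpha_1\cap\alpha_2;0)$ already follows by Theorem~\ref{basic_centrality}~(5) (semidistributivity of the centralizer in its first variable), which holds in every variety. So $\C C(\beta_1,\Delta_{\beta,\alpha};0)$ is a consequence of the normalization $[\beta,\alpha]=0$ alone and carries no trace of the left-distributivity hypothesis. This makes the conversion you flag as ``the main obstacle'' not merely hard but impossible: if $\C C(\beta_1,\Delta_{\beta,\alpha};0)$ on $\m b(\beta)$ implied $\C C(\alpha,\beta;0)$ on $\m b$, then $\C C(\beta,\alpha;0)\Rightarrow\C C(\alpha,\beta;0)$ would hold in every variety, forcing the commutator always to be commutative, which is false.

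Part~(2) has a related difficulty. The transposition $t'(\wec x,\wec y):=t(\wec y,\wec x)$ sends an $\alpha,\beta$-matrix $\left[\begin{smallmatrix}p&q\\r&s\end{smallmatrix}\right]$ to the $\beta,\alpha$-matrix $\left[\begin{smallmatrix}p&r\\q&s\end{smallmatrix}\right]$, so $\C C(\beta,\alpha;0)$ only yields ``$p=r\Rightarrow q=s$,'' which is a column condition, not the row implication ``$p=q\Rightarrow r=s$'' required for $\C C(\alpha,\theta;0)$. Also, binary right distributivity does not by itself control infinite joins, so the reduction to principal $\theta\le\beta$ needs a separate argument. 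The paper avoids all of this by working in $\m a(\alpha)$: for part~(1) it shows $D$ is a union of singleton $\bigl([\eta_1,\Delta_{\alpha,\beta}]+[\eta_2,\Delta_{\alpha,\beta}]\bigr)$-classes (using $[\beta,\alpha]=0$) but not of singleton $[\eta_1+\eta_2,\Delta_{\alpha,\beta}]$-classes (using $[\alpha,\beta]>0$ via restriction to $\m d\cong\m a$); for part~(2) it uses $\beta\le\alpha$ to build an explicit $\eta_1,\beta_2$-matrix of $\m a(\alpha)$ whose top row is constant but whose bottom row exits $D$, giving the failure of right distributivity directly. If you want to keep your general shape, the workable move is to switch from $\m b(\beta)$ to $\m b(\alpha)$ and aim to exhibit a violated instance of distributivity there, rather than trying to pull the distributivity hypothesis back down to $\m b$.
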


\begin{proof}
We start by proving the contrapositive form of Item~(1),
so assume that the commutator
fails to be commutative throughout $\mathcal V$.
There must be some $\m a\in \mathcal V$ that has congruences
$\alpha,\beta\in \Con(\m a)$ such that
$[\alpha,\beta]\not\leq [\beta,\alpha]$.
By Lemma~\ref{asymmetry} we may assume that
$[\beta,\alpha]=0<[\alpha,\beta]$.

Recall that
$\eta_1=0_1=\{((w,x),(y,z))\in A(\alpha)^2\mid w=y\}$ and 
$\eta_2=0_2 =\{((w,x),(y,z))\in A(\alpha)^2\mid x=z\}$
are the restrictions of the coordinate
projection kernels of $\m a^2$ to the subalgebra
$\m a(\alpha)$.

Let $\delta\colon \m a\to \m a(\alpha)\colon x\mapsto (x,x)$
be the diagonal embedding.
We will write $D$ for the set-theoretic image $\delta(A)$
and $\m d$ for the algebra-theoretic image $\delta(\m a)$
(the
diagonal subuniverse of $\m a(\alpha)$).
For a congruence $\theta\in\Con(\m a)$,
we write $\delta(\theta)$ for
$\{((x,x),(y,y))\in A(\alpha)^2\;|\;(x,y)\in\theta\}$,
which is a congruence on $\m d$.

\begin{clm} \label{left_add1}
The diagonal subuniverse $D\leq \m a(\alpha)$
is a union of singleton 
$([\eta_1,\Delta_{\alpha,\beta}]+[\eta_2,\Delta_{\alpha,\beta}])$-classes.
\end{clm}

\noindent
{\it Proof of Claim~\ref{left_add1}.}
Since $[\beta,\alpha]=0$, the set
$D$ is a union of $\Delta_{\alpha,\beta}$-classes. No
two elements of $D$ are related by $\eta_1$, so 
every element of $D$ is a singleton
$(\eta_1\cap \Delta_{\alpha,\beta})$-class.
Since $[\eta_1,\Delta_{\alpha,\beta}]$ is contained in
$\eta_1\cap \Delta_{\alpha,\beta}$, every element of
$D$ is a singleton $[\eta_1, \Delta_{\alpha,\beta}]$-class.
Similarly every element of
$D$ is a singleton $[\eta_2, \Delta_{\alpha,\beta}]$-class.
Therefore every element of
$D$ is a singleton
$([\eta_1, \Delta_{\alpha,\beta}]+[\eta_2, \Delta_{\alpha,\beta}])$-class.
\cqed
\bigskip

\begin{clm} \label{left_add2}
The diagonal subuniverse $D\leq \m a(\alpha)$
is not a union of singleton 
$([\eta_1+\eta_2,\Delta_{\alpha,\beta}])$-classes.
\end{clm}

\noindent
{\it Proof of Claim~\ref{left_add2}.}
We will show that the restriction of the congruence
$[\eta_1+\eta_2,\Delta_{\alpha,\beta}]$
to $D$ is not the equality relation, and this will prove
that $D$ is not a union of singleton 
$([\eta_1+\eta_2,\Delta_{\alpha,\beta}])$-classes.
For this, notice that
\[
[\eta_1+\eta_2,\Delta_{\alpha,\beta}]|_{\m d}\geq
[(\eta_1+\eta_2)|_{\m d},\Delta_{\alpha,\beta}|_{\m d}]
= [\delta(\alpha),\delta(\beta)]=\delta([\alpha,\beta]) > 0.  
\]
The leftmost inequality is derived from Theorem~\ref{basic_centrality}~(9).
\cqed
\bigskip

Claims \ref{left_add1} and \ref{left_add2}
show that $[\eta_1+\eta_2,\Delta_{\alpha,\beta}]\neq 
[\eta_1,\Delta_{\alpha,\beta}]+[\eta_2,\Delta_{\alpha,\beta}]$,
so the commutator is not
left distributive on $\m a(\alpha)$.
\bigskip

Next we argue the contrapositive form of Item~(2)
of the theorem. Assume that
there is some $\m a\in \mathcal V$ that has congruences
$\alpha,\beta\in \Con(\m a)$ such that (i) $\beta\leq \alpha$
but (ii) $[\alpha,\beta]\not\leq [\beta,\alpha]$.
Consulting the proof of Lemma~\ref{asymmetry},
we see that we may refine these assumptions to
(i) $\beta\leq \alpha$ and
(ii) $[\beta,\alpha]=0<[\alpha,\beta]$.

\begin{clm}
\label{right_add1}
The diagonal subuniverse $D\leq \m a(\alpha)$
is a union of singleton 
$([\eta_1,\eta_2]+[\eta_1,\Delta_{\alpha,\beta}])$-classes.
\end{clm}

\noindent
{\it Proof of Claim~\ref{right_add1}.}
We have $[\eta_1,\eta_2]\leq \eta_1\cap \eta_2 = 0$,
so $[\eta_1,\eta_2]$ is the equality relation on $\m a(\alpha)$
and all $[\eta_1,\eta_2]$-classes are singletons.
Hence the subuniverse $D$ consists of singleton
$[\eta_1,\eta_2]$-classes.
We may copy the proof of Claim~\ref{left_add1} to establish
that $D$ is a union of singleton 
$[\eta_1,\Delta_{\alpha,\beta}]$-classes.
(The situation here is the same as the one there, except
here we have the extra property that $\beta\leq\alpha$.)
It follows that $D$ is a union of singleton 
$([\eta_1,\eta_2]+[\eta_1,\Delta_{\alpha,\beta}])$-classes.
\cqed
\bigskip

\begin{clm}
\label{right_add2}
$D$ is not a union of singleton 
$[\eta_1,\eta_2+\Delta_{\alpha,\beta}]$-classes.
\end{clm}

\noindent
{\it Proof of Claim~\ref{right_add2}.}
For this proof, note that $\eta_2+\Delta_{\alpha,\beta}=\beta_2$.
    
We started the proof by arranging that $[\alpha,\beta]>0$.    
This means that there is an $\alpha,\beta$-matrix
\[
\begin{bmatrix}
t(\wec{a},\wec{u}) &   t(\wec{a},\wec{v}) \\
t(\wec{b},\wec{u}) &   t(\wec{b},\wec{v}) 
\end{bmatrix}=
\begin{bmatrix}
p &  q \\
r &  s
\end{bmatrix}, \;\;\;
\wec{a}\wrel{\alpha}\wec{b},\;\; \wec{u}\wrel{\beta}\wec{v},
\]
with $p=q$ but $r\neq s$.
Consider the $\eta_1, \beta_2$-matrix of $\m a(\alpha)$
\begin{equation}
  \tag{M}\label{MM}
\begin{bmatrix}
t\left((\wec{b},\wec{a}), (\wec{u}, \wec{u})\right)&
t\left((\wec{b},\wec{a}), (\wec{u}, \wec{v})\right)\\
t\left((\wec{b},\wec{b}), (\wec{u}, \wec{u})\right)&
t\left((\wec{b},\wec{b}), (\wec{u}, \wec{v})\right)
\end{bmatrix}=
\begin{bmatrix}
(r, p) &  (r, q) \\
(r, r) &  (r, s).
\end{bmatrix}
\end{equation}
The fact that this truly is an $\eta_1,\beta_2$-matrix
of $\m a(\alpha)$
follows from our assumption that $\beta\leq \alpha$
(and this is the only place in the argument where
this assumption is needed).
Namely, to know that \eqref{MM}
is an $\eta_1,\beta_2$-matrix
of $\m a(\alpha)$
we need to know that $\m a(\alpha)$
contains all pairs of the form
$(b_i,a_i)$, 
$(u_i,u_i)$,
$(b_i,b_i)$, and 
$(u_i,v_i)$.
It is easy to see that $\m a(\alpha)$
contains all pairs of these types except possibly
those of the type
$(u_i,v_i)$.
Such pairs lie in $\beta$, so if $\beta\leq \alpha$
they will also lie in $A(\alpha)=\alpha$.

We have $(r, p)=(r, q)$,
so $\left((r, r),(r, s)\right)$
belongs to $[\eta_1,\beta_2]$.
Since $(r,r) \in D$ and
$(r,s)\notin D$,
the $[\eta_1,\beta_2]$-class of
$(r,r)\in D$ is not a singleton,
hence $D$ is not a union of
singleton $[\eta_1,\beta_2]$-classes.
\cqed
\bigskip

Claims \ref{right_add1} and \ref{right_add2}
show that $[\eta_1,\eta_2+\Delta_{\alpha,\beta}]\neq 
[\eta_1,\eta_2]+[\eta_1,\Delta_{\alpha,\beta}]$,
so the commutator is not
right distributive on $\m a(\alpha)$.
\end{proof}

\section{Main results}\label{main}

Now we prove results which seem to require a ground
Maltsev condition. The most important theorems
of this section will be proved under the assumption
of ``existence of a Taylor term'', \cite[Definition~2.15]{shape}.
A Taylor term for a variety $\mathcal V$
is a term $T(x_1,\ldots,x_{n})$ such that
$\mathcal V$ satisfies the
identity $T(x,\ldots,x)\approx x$
and, for each $i$ between $1$ and $n$, $\mathcal V$ satisfies
some identity of the form $T(\wec{w})\approx T(\wec{z})$
with $w_i \neq z_i$.
Any identity of the form
$T(\wec{w})\approx T(\wec{z})$
with
$w_i \neq z_i$
is called an ``$i$-th place Taylor identity'' of $T$.

Some of the results of this section
will be proved under the stronger
assumptions  ``existence of a difference term'' or
 ``existence of a weak difference term'',
Definition~\ref{left_right}.
The class of varieties with a difference term is 
definable by a Maltsev condition.
The same is true for the class
of varieties with a weak difference term.
The Mal\-tsev conditions
were identified in principle in
\cite{kearnes-szendrei}
in Theorem~4.8 and the paragraph following
the proof of that theorem.
We define
weak and ordinary difference terms next.

\begin{df} \label{left_right}
  Let $\mathcal V$ be a variety.
  A ternary $\mathcal V$-term $t(x,y,z)$ shall be called
\begin{enumerate}
\item  a \emph{right Maltsev term} for $\mathcal V$ if 
${\mathcal V}\models t(x,x,y)\approx y$.
\item  a \emph{left Maltsev term} for $\mathcal V$ if 
${\mathcal V}\models t(x,y,y)\approx x$.
\item  a \emph{Maltsev term} for $\mathcal V$ if it is
  both a right and left Maltsev term.
\item  a \emph{right difference term} for $\mathcal V$ if,
  for any $\m b\in {\mathcal V}$,
  $t^{\m b}(a,a,b) = b$ holds
  whenever the pair $(a,b)$ is contained in
  an abelian
  congruence.
\item  a \emph{left difference term} for $\mathcal V$ if,
  for any $\m b\in {\mathcal V}$,
  $t^{\m b}(a,b,b) = a$ holds
  whenever the pair $(a,b)$ is contained in
  an abelian
  congruence.
\item  a \emph{weak difference term} for $\mathcal V$ if 
  it is both a right and left difference term.
\item  a \emph{difference term} for $\mathcal V$
  if 
it is a right Maltsev term and a left difference term.
\end{enumerate}
\end{df}

This left/right terminology is not standard, but it
is introduced here because the new concept
``right difference term''
will play a role in the proof of
Theorem~\ref{characterization_of_weak}
(e.g., in Claim~\ref{delta_char}).

As we noted in the Introduction,
there is a weakest nontrivial idempotent
Maltsev condition. The class of varieties
defined by this condition is the
the class of varieties with a Taylor term.
As noted before Definition~\ref{left_right},
the classes of varieties with (i) a weak difference term
or (ii) an ordinary difference term 
are also definable by idempotent Maltsev conditions.
Ordinary difference terms are formally stronger
than weak difference terms, which 
are stronger than Taylor terms.
These differences in strength are strict.
The algebra $\m i$ of Example~\ref{simple_ex}
generates a variety that has a Taylor term,
but does not have a weak difference term.
(The justification for this claim is given in that example.)
The semigroup $\mathbb Z_2\times \mathbb S_2$
described in Section~\ref{examples}
generates a variety with a weak difference
term,
but with no difference term.
(The justification for this claim is given in that section.)

Throughout this section, the assumption that
the variety under consideration
has a Taylor term will be used to
invoke the following theorem, which
expresses 
limits on the behavior of the centralizer relation.

\begin{thm} \label{memoir_pentagon}
  \textrm{\rm (\cite[Theorem 4.16(2)(i)]{shape})}
  Let $\mathcal V$ be a variety that has a Taylor term
  and let $\m a$ be a member of $\mathcal V$.
There is no pentagon sublattice of $\Con(\m a)$,
labeled as in Figure~\ref{fig4}, such that $\C C(\beta,\theta;\delta)$ holds.
\begin{figure}[ht]
\begin{center}
\setlength{\unitlength}{1mm}
\begin{picture}(20,33)
\put(0,15){\circle*{1.2}}
\put(10,0){\circle*{1.2}}
\put(10,30){\circle*{1.2}}
\put(20,10){\circle*{1.2}}
\put(20,20){\circle*{1.2}}

\put(10,0){\line(-2,3){10}}
\put(10,0){\line(1,1){10}}
\put(10,30){\line(-2,-3){10}}
\put(10,30){\line(1,-1){10}}
\put(20,20){\line(0,-1){10}}

\put(-4.5,13){$\beta$}
\put(22,9){$\delta$}
\put(22,19){$\theta$}
\put(9,32){$\alpha$}
\end{picture}
\bigskip

\caption{\sc (Theorem~\ref{memoir_pentagon})
  A pentagon in $\textrm{Con}(\m A)$ will
  not satisfy $\C C(\beta,\theta;\delta)$.}\label{fig4}
\end{center}
\end{figure}
\end{thm}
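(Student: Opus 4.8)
We note that this theorem is imported from \cite[Theorem~4.16(2)(i)]{shape}, and here we only sketch how one might argue it. The plan is to prove the contrapositive: if some $\m a\in\mathcal V$ carries a pentagon as in Figure~\ref{fig4} with $\C C(\beta,\theta;\delta)$, then $\mathcal V$ has no Taylor term. By the characterization of Taylor varieties recalled in the Introduction (from \cite{taylor}), it suffices to produce in $\mathcal V$ a two-element algebra all of whose basic operations are projections; indeed, a Taylor term $T$ would interpret in such an algebra as some projection $\pi_i$, and then an $i$-th place Taylor identity $T(\wec w)\approx T(\wec z)$ with $w_i\neq z_i$ would force two distinct variables to be equal, a contradiction.

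First I would normalize the centrality hypothesis. Since $\delta\leq\theta$ gives $\theta\cap(\delta+(\theta\cap\delta))=\theta\cap\delta=\delta$, Theorem~\ref{basic_centrality}(8) yields $\C C(\delta,\theta;\delta)$; combining this with the hypothesis $\C C(\beta,\theta;\delta)$ through semidistributivity in the first variable (Theorem~\ref{basic_centrality}(5)) and the pentagon relation $\beta+\delta=\alpha$, we obtain $\C C(\alpha,\theta;\delta)$, i.e.\ $[\alpha,\theta]\leq\delta$. After this step the configuration reads: a pentagon with $\beta\cap\theta=0$, $\delta<\theta$, $\beta+\theta=\alpha$, in which the critical interval $I[\delta,\theta]$ is not merely abelian ($\C C(\theta,\theta;\delta)$) but is centralized modulo $\delta$ by the top $\alpha$. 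It is precisely this strengthened form that is violated by the semigroup example of Section~\ref{examples}, whose pentagon has abelian but not $\alpha$-centralized critical interval.

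The heart of the argument is to convert ``pentagon $+$ $\C C(\alpha,\theta;\delta)$'' into such a projection algebra. One route is through tame congruence theory: a cover pair $\mu\prec\nu$ inside $I[\delta,\theta]$ is abelian, hence of type $\mathbf 1$ or $\mathbf 2$; type $\mathbf 1$ is excluded by the presence of a Taylor term, so the critical interval is of type $\mathbf 2$ throughout, and one shows that a type-$\mathbf 2$ prime quotient which sits inside a pentagon and is centralized by the pentagon's top forces, after localizing to a minimal set, an algebra whose induced operations are all projections. A more hands-on route, parallel to Claims~\ref{left_add1}--\ref{right_add2} above, is to pass to $\m a(\alpha)\leq\m a^2$, read $[\alpha,\theta]\leq\delta$ as saying that an appropriate union of $\delta$-blocks of the diagonal is a union of $\Delta_{\alpha,\theta}$-classes, and then exploit $\beta\cap\theta=0$ together with $\beta+\theta=\alpha$ to pin down term operations on a two-element section of a quotient of a subalgebra of a finite power of $\m a$. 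Either way, I expect this last step to be the main obstacle: translating the purely lattice- and commutator-theoretic data into concrete $S,T$-matrices and term operations that witness the failure of the Taylor Maltsev condition is the technical core of \cite[Chapter~4]{shape}, carried out there via a detailed analysis of minimal sets together with repeated appeals to the many parts of Theorem~\ref{basic_centrality}. (As a byproduct, once the theorem is in hand the footnote to Theorem~\ref{better_pentagons} is immediate: in a Taylor variety a pentagon with abelian critical interval must automatically fail $\C C(\beta,\theta;\delta)$.)
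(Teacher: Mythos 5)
The paper does not prove this theorem; it is imported verbatim from \cite[Theorem~4.16(2)(i)]{shape}, and you correctly flag that up front. So there is no ``paper's own proof'' to compare against. What the paper \emph{does} supply is the normalization step that you reproduce in your second paragraph: deriving $\C C(\delta,\theta;\delta)$ from Theorem~\ref{basic_centrality}(8), joining with the hypothesis via Theorem~\ref{basic_centrality}(5) to get $\C C(\beta+\delta,\theta;\delta)=\C C(\alpha,\theta;\delta)$, and using monotonicity (Theorem~\ref{basic_centrality}(1)) for the converse direction. This appears word-for-word in the discussion between Theorems~\ref{memoir_pentagon} and~\ref{memoir_pentagon_2}, so your normalization exactly matches the paper's own observation and is correct. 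Beyond that, you honestly defer to \cite[Chapter~4]{shape} for the technical core, which is the appropriate stance given the paper itself does the same.

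Two small cautions about the sketch that follows the normalization. First, you write the configuration as having $\beta\cap\theta=0$, but Figure~\ref{fig4} does not label the bottom of the pentagon as $0$; the hypothesis only gives a pentagon $\{\alpha,\beta,\theta,\delta,\beta\cap\theta\}$, so before you can assume $\beta\cap\theta=0$ you must pass to the quotient $\m a/(\beta\cap\theta)$ (using Theorem~\ref{basic_centrality}(10) to transport the centrality). Second, the tame-congruence-theoretic route you propose (abelian prime quotients are type~$\mathbf 1$ or~$\mathbf 2$, type~$\mathbf 1$ is ruled out by a Taylor term, then localize to a minimal set) is only available for locally finite varieties, whereas Theorem~\ref{memoir_pentagon} is stated for arbitrary Taylor varieties. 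The argument in \cite{shape} is not a TCT argument; it runs through the $S,T$-matrix and $\Delta_{\alpha,\beta}$ machinery directly, closer in spirit to your second proposed route, precisely so that it applies without local finiteness. If one only cared about the locally finite case (which covers the applications to finite algebras discussed in the Outro), your TCT sketch would be a legitimate and arguably shorter alternative route.
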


We restate this theorem in a positive
and formally stronger way.

\begin{thm} \label{memoir_pentagon_2}
  Let $\mathcal V$ be a variety that has a Taylor term
  and let $\m a$ be a member of $\mathcal V$.
Given any pentagon sublattice of $\Con(\m a)$,
labeled as in Figure~\ref{fig5},
$[\alpha,x]_{\delta} = x$ holds for every
$x\in I[\delta,\theta]$.
(Equivalently, if $\delta \leq y < x\leq \theta$, then
$\C C(\alpha,x;y)$ fails.) \hfill $\Box$
\begin{figure}[ht]
\begin{center}
\setlength{\unitlength}{1mm}
\begin{picture}(20,33)
\put(0,15){\circle*{1.2}}
\put(10,0){\circle*{1.2}}
\put(10,30){\circle*{1.2}}
\put(20,10){\circle*{1.2}}
\put(20,15){\circle*{1.2}}
\put(20,20){\circle*{1.2}}

\put(10,0){\line(-2,3){10}}
\put(10,0){\line(1,1){10}}
\put(10,30){\line(-2,-3){10}}
\put(10,30){\line(1,-1){10}}
\put(20,20){\line(0,-1){10}}

\put(-4.5,13){$\beta$}
\put(22,9){$\delta$}
\put(22,14){$x$}
\put(22,19){$\theta$}
\put(9,32){$\alpha$}
\end{picture}
\bigskip

\caption{\sc If $\delta\leq x\leq \theta$, then
$[\alpha,x]_{\delta} = x$.}\label{fig5}
\end{center}
\end{figure}
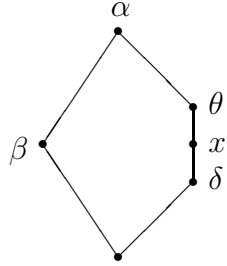
\end{thm}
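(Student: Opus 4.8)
The plan is to deduce this directly from Theorem~\ref{memoir_pentagon} by combining monotonicity of the centralizer relation with the elementary fact that a pentagon remains a pentagon when its critical interval is replaced by a subinterval. I would first prove the parenthetical (``equivalent'') form and then read off the statement about $[\alpha,x]_{\delta}$.

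For the parenthetical form, suppose toward a contradiction that $\delta\leq y<x\leq\theta$ and that $\C C(\alpha,x;y)$ holds. Since $\beta\leq\alpha$ in the pentagon of Figure~\ref{fig5}, Theorem~\ref{basic_centrality}(1) gives $\C C(\beta,x;y)$. Now I would verify that $\{0,\beta,y,x,\alpha\}$, where $0$ denotes the bottom of the given pentagon, is itself a pentagon: from $\beta\cap\theta=0$ and $\beta+\delta=\alpha$ in the original pentagon together with $\delta\leq y<x\leq\theta$, monotonicity of $\cap$ and $+$ yields $\beta\cap y=\beta\cap x=0$ and $\beta+y=\beta+x=\alpha$, while $0<\delta\leq y<x\leq\theta<\alpha$ (the outer inequalities strict because the pentagon has five distinct elements) makes $\{0,\beta,y,x,\alpha\}$ a five-element sublattice in which $\beta$ is incomparable to both $y$ and $x$ (indeed $\beta\cap y=0<y$ and $\beta+y=\alpha>\beta$, similarly for $x$); hence it is isomorphic to $\m n_5$, labeled as in Figure~\ref{fig4} with $\delta,\theta$ replaced by $y,x$. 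Since $\C C(\beta,x;y)$ holds on it, this contradicts Theorem~\ref{memoir_pentagon}. Thus $\C C(\alpha,x;y)$ fails whenever $\delta\leq y<x\leq\theta$.

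To obtain the stated form, fix $x\in I[\delta,\theta]$. By definition of the relative commutator, $[\alpha,x]_{\delta}\geq\delta$. By Theorem~\ref{basic_centrality}(8) the relation $\C C(\alpha,x;x)$ holds, and $x\geq\delta$, so $[\alpha,x]_{\delta}\leq x$; and by Theorem~\ref{basic_centrality}(6), $\C C(\alpha,x;[\alpha,x]_{\delta})$ holds. Were $[\alpha,x]_{\delta}$ strictly below $x$, then $y:=[\alpha,x]_{\delta}$ would satisfy $\delta\leq y<x\leq\theta$ together with $\C C(\alpha,x;y)$, contradicting the previous paragraph; hence $[\alpha,x]_{\delta}=x$.

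I do not expect a genuine obstacle here: the argument is essentially a repackaging of Theorem~\ref{memoir_pentagon} through monotonicity. The one point deserving a moment's care is confirming that $\{0,\beta,y,x,\alpha\}$ really is a pentagon, and in particular that the seemingly degenerate boundary choices $y=\delta$ or $x=\theta$ do not collapse the sublattice; they do not, since $\delta$ and $\theta$ are already distinct from $0$, from $\alpha$, and from $\beta$, and $y<x$ is strict, so the five elements remain distinct and the meet/join computations are unaffected.
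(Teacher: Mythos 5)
Your proposal is correct and follows essentially the same route as the paper's: replace the critical interval $I[\delta,\theta]$ by the subinterval $I[y,x]$ to get another pentagon, apply Theorem~\ref{memoir_pentagon} to that pentagon, and use monotonicity (Theorem~\ref{basic_centrality}(1)) to pass between $\C C(\beta,x;y)$ and $\C C(\alpha,x;y)$. The paper's discussion following Theorem~\ref{memoir_pentagon_2} compresses the verification that $\{\beta\cap y,\beta,y,x,\alpha\}$ is a pentagon and uses the same facts (parts (6) and (8) of Theorem~\ref{basic_centrality}) to conclude $[\alpha,x]_{\delta}=x$; you have simply spelled out the details.
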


There are two observations to make to see that
Theorem~\ref{memoir_pentagon}
and Theorem~\ref{memoir_pentagon_2} have the same content.
The first observation is:
(i) $\C C(\beta,\theta;\delta)$ holds in the pentagon of
Figure~\ref{fig4}  if and only if
(ii) $\C C(\alpha,\theta;\delta)$ holds in that pentagon.
One obtains (ii)$\Rightarrow$(i) by monotonicity
of the centralizer in its first place
(Theorem~\ref{basic_centrality}~(1)).
One obtains (i)$\Rightarrow$(ii) by
assuming $\C C(\beta,\theta;\delta)$,
deriving $\C C(\delta,\theta;\delta)$ from 
Theorem~\ref{basic_centrality}~(8), then
deriving $\C C(\beta+\delta,\theta;\delta)$
from 
Theorem~\ref{basic_centrality}~(5).
Since $\alpha = \beta+\delta$, we get that (ii) holds.

The second observation is that if $\delta\leq y <x\leq \theta$, then
$\{\alpha, \beta, x, y, \beta\cap y\}$ is another pentagon
in $\Con(\m a)$. Applying
Theorem~\ref{memoir_pentagon} to this new
pentagon we get that $\C C(\beta,x;y)$ fails.
Using the idea of the preceding paragraph, this
is equivalent to the assertion that 
$\C C(\alpha,x;y)$ fails whenever
$\delta\leq y <x\leq \theta$.
In particular, since $\delta\leq [\alpha,x]_{\delta} \leq x\leq \theta$
and $\C C(\alpha,x;[\alpha,x]_{\delta})$ holds,
we cannot have $[\alpha,x]_{\delta} < x$.
The alternative is that
$[\alpha,x]_{\delta} = x$ whenever $\delta\leq x\leq \theta$.
On the other hand, if we have
$[\alpha,x]_{\delta} = x$ for $x=\theta$, then
we recover the conclusion of
Theorem~\ref{memoir_pentagon}.

The first main theorem of this section
gives a commutator-theoretic characterization
of varieties with a weak difference term
(Theorem~\ref{characterization_of_weak}).
Using this characterization, we shall deduce that
any variety with a Taylor term and
commutative commutator must have a weak
difference term
(Theorem~\ref{commutative_implies_weak}).
The proofs of these two results were developed  
from an analysis
of a simple example, which we describe first.
The inclusion of this example is meant to help guide
the reader through the lengthy proof of
Theorem~\ref{characterization_of_weak}.

\begin{example} \label{simple_ex}
Let $\mathbb R$ be the real line considered as
a $1$-dimensional real vector space.
Let $\mathbb R^{\circ}$ be the reduct of $\mathbb R$
to the idempotent linear operations
of the form $f_r(x,y)=rx+(1-r)y$, $0<r<1$.
Let $\m i$ be the subalgebra of $\mathbb R^{\circ}$
whose universe is the unit interval $I=[0,1]$.
Thus, $\m i = \langle [0,1]; \{f_r(x,y)\;|\;0<r<1\}\rangle$
is a subalgebra of a reduct of an abelian algebra $\mathbb R$,
which makes $\m i$ 
an abelian algebra.
From Definition~\ref{left_right},
the concepts of ``Maltsev term'',
``difference term'', and
``weak difference term'' all coincide for abelian
algebras.
The fact that $I$ is closed under all of the
$f_r$-operations and is not closed
under the unique Maltsev operation
$x-y+z$ of $\mathbb R$ shows that neither $\mathbb R^{\circ}$
nor $\m i$ have Maltsev operations, and
therefore neither $\mathbb R^{\circ}$
nor $\m i$
has a weak difference term.
${\mathcal V}(\m i)$
does have a Taylor term, namely
$T(x,y) = f_{\frac{1}{2}}(x,y)=\frac{1}{2}x+\frac{1}{2}y$.
This is a Taylor operation for ${\mathcal V}(\m i)$,
since $T(x,x)\approx x$ and $T(x,y)\approx T(y,x)$
hold in $\m i$, and the latter is both a first-place
and a second-place Taylor identity for $T(x,y)$ in
${\mathcal V}(\m i)$.

The algebra $\m i$ is free in ${\mathcal V}(\m i)$
over the $2$-element generating set $\{0,1\}$.
We identify some congruences
in $\Con(\m i\times \m i)$ and indicate the sublattice
they generate in Figure~\ref{fig6}.
\begin{figure}[ht]
\begin{center}
\setlength{\unitlength}{1mm}
\begin{picture}(20,35)
\put(0,15){\circle*{1.2}}
\put(15,0){\circle*{1.2}}
\put(15,30){\circle*{1.2}}
\put(30,15){\circle*{1.2}}
\put(22.5,7.5){\circle*{1.2}}

\put(15,0){\line(-1,1){15}}
\put(15,0){\line(1,1){15}}
\put(15,30){\line(-1,-1){15}}
\put(15,30){\line(1,-1){15}}

\put(14,-5){$0$}
\put(14,32){$1$}

\put(-4.5,14){$\eta_1$}
\put(32,14){$\eta_2$}
\put(24,6){$\delta=\cg(((0,0),(1,0)))$}
\end{picture}
\bigskip

\caption{\sc A sublattice of $\Con(\m i\times \m i)$.}\label{fig6}
\end{center}
\end{figure}
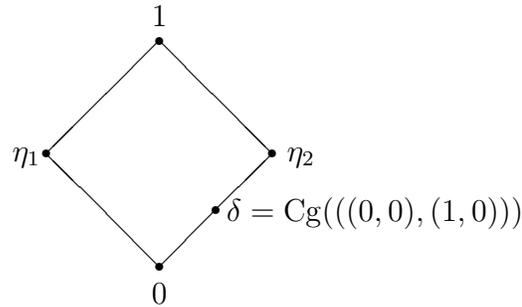

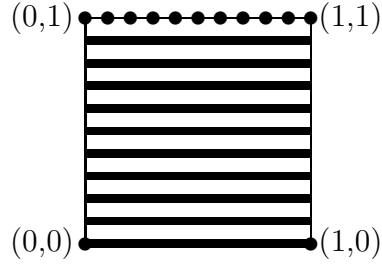
\begin{figure}[ht]
\begin{center}
\setlength{\unitlength}{1mm}
\begin{picture}(20,32)
\thinlines
\put(0,0){\line(1,0){30}}
\put(0,0){\line(0,1){30}}
\put(30,30){\line(-1,0){30}}
\put(30,30){\line(0,-1){30}}

\thicklines
\linethickness{1mm}
\put(0,27){\line(1,0){30}}
\put(0,24){\line(1,0){30}}
\put(0,21){\line(1,0){30}}
\put(0,18){\line(1,0){30}}
\put(0,15){\line(1,0){30}}
\put(0,12){\line(1,0){30}}
\put(0,9){\line(1,0){30}}
\put(0,6){\line(1,0){30}}
\put(0,3){\line(1,0){30}}
\put(0,0){\line(1,0){30}}

\put(0,30){\circle*{1.6}}
\put(3,30){\circle*{1.6}}
\put(6,30){\circle*{1.6}}
\put(9,30){\circle*{1.6}}
\put(12,30){\circle*{1.6}}
\put(15,30){\circle*{1.6}}
\put(18,30){\circle*{1.6}}
\put(21,30){\circle*{1.6}}
\put(24,30){\circle*{1.6}}
\put(27,30){\circle*{1.6}}
\put(30,30){\circle*{1.6}}

\put(0,0){\circle*{1.6}}
\put(30,00){\circle*{1.6}}

\put(-10,-1){(0,0)}
\put(-10,29){(0,1)}
\put(31,-1){(1,0)}
\put(31,29){(1,1)}
\end{picture}
\bigskip

\caption{\sc The partition of $\m i\times \m i$ induced by $\delta=\cg(((0,0),(1,0)))$.}\label{fig7}
\end{center}
\end{figure}

The projection kernels are the congruences
$\eta_1=\cg(((0,0),(0,1)),((1,0),(1,1)))$
and
$\eta_2=\cg(((0,0),(1,0)),((0,1),(1,1)))$.
The congruence $\eta_1$ partitions the ``square''
$\m i\times \m i$
into congruence classes that are ``vertical lines''
and $\eta_2$ partitions 
$\m i\times \m i$
into congruence classes that are ``horizontal lines''.
The interesting congruence is $\delta=\cg(((0,0),(1,0)))$.
The partition of
$\m i\times \m i$ it yields is depicted in Figure~\ref{fig7}.
In the partition depicted in Figure~\ref{fig7},
all congruence classes of $\delta$ agree with those
of $\eta_2$ except the class that is the ``top line'',
$X:=\m i\times \{1\}$.
The top line $X$ is a single 
$\eta_2$-class, and hence a union
of $\delta$-classes, but 
$\delta$ restricts to be the equality relation
on the top line $X$ 
while $\eta_2$ restricts to be the total relation.

This unusual structure for $\delta$
can be exploited the following way.
The operation
$T(x,y) = f_{\frac{1}{2}}(x,y)=\frac{1}{2}x+\frac{1}{2}y$
may be used to create a $1,\eta_2$-matrix

\medskip

$$
\begin{bmatrix}
T\left((1,0), (0,1)\right)&
T\left((1,0), (1,1)\right)\\
T\left((1,1), (0,1)\right)&
T\left((1,1), (1,1)\right)\\
\end{bmatrix}=
\begin{bmatrix}
(.5,.5) &  (1,.5)\\
(.5, 1) &  (1,1) 
\end{bmatrix}
$$

\medskip

\noindent
where the elements on the top row are $\delta$-related
while the elements on the bottom row are not.
This matrix witnesses that $\neg \C C(1,\eta_2;\delta)$.
In the quotient $(\m i\times \m i)/\delta$
we must have $[\overline{1},\overline{\eta}_2]>0$
where $\overline{\eta}_2:=\eta_2/\delta$
and $\overline{1}:=1/\delta$.
It is possible to argue that 
$[\overline{\eta}_2,\overline{1}]=0$,
and therefore that
$[\overline{\eta}_2,\overline{1}]\neq [\overline{1},\overline{\eta}_2]$.
Although this example is special,
the location of noncommutativity in $\mathcal V$
is general as we shall see in the proofs
of the next two results.
\end{example}

\begin{thm} \label{characterization_of_weak}
The following are equivalent for a variety $\mathcal V$.
\begin{enumerate}
\item $\mathcal V$ has a weak difference term.
\item Whenever $\m a\in \mathcal V$ and $\alpha\in\Con(\m a)$
  is abelian, the interval $I[0,\alpha]$ consists of permuting
  equivalence relations.
\item Whenever $\m a\in \mathcal V$ and $\alpha\in\Con(\m a)$
  is abelian, the interval $I[0,\alpha]$ is modular.
\item Whenever $\m a\in \mathcal V$ and $\alpha\in\Con(\m a)$,
  there is no pentagon labeled as in
  Figure~\ref{fig8} with $[\alpha,\alpha]=0$.
(No ``spanning pentagon'' in $I[0,\alpha]$ if $\alpha$ is abelian.)  
\item Whenever $\m a\in \mathcal V$ and $\alpha\in\Con(\m a)$,
there is no pentagon labeled as in
  Figure~\ref{fig8} where $[\alpha,\alpha]=0$ 
  and $\C C(\theta,\alpha;\delta)$.
\begin{figure}[ht]
\begin{center}
\setlength{\unitlength}{1mm}
\begin{picture}(20,33)
\put(0,15){\circle*{1.2}}
\put(10,0){\circle*{1.2}}
\put(10,30){\circle*{1.2}}
\put(20,10){\circle*{1.2}}
\put(20,20){\circle*{1.2}}

\put(10,0){\line(-2,3){10}}
\put(10,0){\line(1,1){10}}
\put(10,30){\line(-2,-3){10}}
\put(10,30){\line(1,-1){10}}
\put(20,20){\line(0,-1){10}}

\put(-4.5,13){$\beta$}
\put(22,9){$\delta$}
\put(22,19){$\theta$}
\put(9,32){$\alpha$}
\put(9,-5){$0$}
\end{picture}
\bigskip

\caption{\sc Forbidden sublattice if both $[\alpha,\alpha]=0$ and
  $\C C(\theta,\alpha;\delta)$ hold.}\label{fig8}
\end{center}
\end{figure}
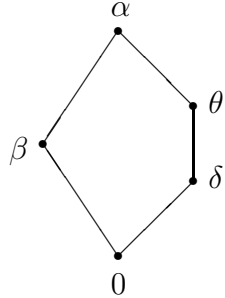
\end{enumerate}  
\end{thm}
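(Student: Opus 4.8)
The plan is to establish the cycle of implications $(1)\Rightarrow(2)\Rightarrow(3)\Rightarrow(4)\Rightarrow(5)\Rightarrow(1)$, with nearly all of the difficulty concentrated in the final step.

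For $(1)\Rightarrow(2)$ I would argue directly from the definition. Fix a weak difference term $d$ for $\mathcal V$, an algebra $\m a\in\mathcal V$, an abelian congruence $\alpha\in\Con(\m a)$, and $\theta,\psi\in I[0,\alpha]$. Given $(x,z)\in\theta\circ\psi$, pick $y$ with $x\stackrel{\theta}{\equiv}y\stackrel{\psi}{\equiv}z$ and put $y':=d(x,y,z)$. Since $(x,y)\in\theta\le\alpha$ and $(y,z)\in\psi\le\alpha$ both lie in the abelian congruence $\alpha$, the definition of a weak difference term (Definition~\ref{left_right}) gives $d(x,y,y)=x$ and $d(y,y,z)=z$; compatibility of $\psi$ and $\theta$ then yields $y'=d(x,y,z)\stackrel{\psi}{\equiv}d(x,y,y)=x$ and $y'=d(x,y,z)\stackrel{\theta}{\equiv}d(y,y,z)=z$. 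Hence $x\stackrel{\psi}{\equiv}y'\stackrel{\theta}{\equiv}z$, so $\theta\circ\psi\subseteq\psi\circ\theta$, and by symmetry $I[0,\alpha]$ consists of permuting equivalence relations.

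The three middle implications are brief. For $(2)\Rightarrow(3)$ I would cite the classical fact that a lattice of pairwise permuting equivalence relations is modular, applied to $I[0,\alpha]$. For $(3)\Rightarrow(4)$: a pentagon labeled as in Figure~\ref{fig8} with $[\alpha,\alpha]=0$ is a copy of $\m n_5$ lying inside the interval $I[0,\alpha]$ of the abelian congruence $\alpha$, contradicting the modularity of that interval asserted by~(3). For $(4)\Rightarrow(5)$ there is nothing to prove, since in fact every pentagon as in Figure~\ref{fig8} with $[\alpha,\alpha]=0$ automatically satisfies $\C C(\theta,\alpha;\delta)$: from $\C C(\alpha,\alpha;0)$ and $\theta\le\alpha$ we obtain $\C C(\theta,\alpha;0)$ by monotonicity (Theorem~\ref{basic_centrality}~(1)), hence $\C C(\theta,\alpha;\delta)$; thus conditions (4) and (5) forbid exactly the same pentagons.

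The substance of the theorem is $(5)\Rightarrow(1)$, which I would prove by contraposition and which I expect to be the main obstacle. Assume $\mathcal V$ has no weak difference term. Using the Maltsev-condition description of varieties with a weak difference term (\cite{kearnes-szendrei}, Theorem~4.8 and the following paragraph), the failure is witnessed in a finitely generated free algebra of $\mathcal V$, and unwinding the failing identities produces an algebra $\m a\in\mathcal V$ with an abelian congruence $\alpha$ whose behaviour below is not ``affine'' — concretely, with a pair of congruences in $I[0,\alpha]$ that do not permute. The delicate part is to refine this into the precise configuration of Figure~\ref{fig8}, and here I would imitate Example~\ref{simple_ex}: passing to $\m a(\alpha)$, the projection kernels $\eta_1,\eta_2$ together with a suitable principal congruence $\delta$ — the analogue of $\cg(((0,0),(1,0)))$ in the example, whose strict inclusion $\delta<\eta_2$ reflects exactly the non-affineness of $\alpha$ — should assemble into a pentagon; the abelianness of $\alpha$ transports to $\m a(\alpha)$ and supplies the centrality $\C C(\theta,\alpha;\delta)$ as in the previous paragraph, and Theorem~\ref{better_pentagons} can be brought in to normalize the bottom of the pentagon to $0$ if needed. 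The crux, and the hardest point, is producing that strict inclusion $\delta<\eta_2$ from the failure of the weak-difference-term identities: turning ``no single finite chain of difference-term-like terms works throughout $\mathcal V$'' into one honest pentagon in one congruence lattice.
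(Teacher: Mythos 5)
The easy implications $(1)\Rightarrow(2)\Rightarrow(3)\Rightarrow(4)\Rightarrow(5)$ are essentially right and match the paper, but your justification for $(4)\Rightarrow(5)$ contains a concrete error that then leaks into the hard direction. You assert that in any Figure~\ref{fig8} pentagon with $[\alpha,\alpha]=0$ the condition $\C C(\theta,\alpha;\delta)$ is automatic, arguing ``from $\C C(\alpha,\alpha;0)$ and $\theta\le\alpha$ we obtain $\C C(\theta,\alpha;0)$ by monotonicity, hence $\C C(\theta,\alpha;\delta)$.'' The ``hence'' is the step that fails: passing from $\C C(\theta,\alpha;0)$ to $\C C(\theta,\alpha;\delta)$ is precisely \emph{stability of the centralizer under lifting in the third place}, one of the ten commutator properties this paper is about, and it is far from automatic. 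Theorem~\ref{basic_centrality} gives downward closure in $\delta$ (item (6)), not upward; Theorem~\ref{main4} shows that even the \emph{weak} lifting property (lifting within $I[0,\alpha\cap\beta]$) is equivalent, given a Taylor term, to the existence of a difference term. If your automaticity claim were correct, the paper would not need the enlargement of $\delta$ to $\varepsilon$ and the separate Claim~\ref{cent1}(ii) establishing $\C C(\eta_2,\eta_1+\eta_2;\varepsilon)$. This error is harmless for $(4)\Rightarrow(5)$ itself, which is the trivial observation that $(5)$ forbids fewer configurations, but it means your claim that (4) and (5) are equivalent statements is unproven, and more importantly it undermines the way you propose to obtain the centrality in $(5)\Rightarrow(1)$.

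The genuine gap is $(5)\Rightarrow(1)$, which you yourself flag as ``the main obstacle'' and ``the hardest point.'' You correctly identify the strategy (contraposition; imitate Example~\ref{simple_ex}; build a pentagon in an algebra of pairs using the projection kernels and a principal congruence $\delta$), but you do not carry it out, and the two places you indicate as hard are exactly where the paper's proof does real work. First, you need to turn ``no weak difference term'' into the strict inclusion $\delta<\eta_2$; the paper does this by working in a specific algebra $\m a\le\overline{\m f}\times\overline{\m f}$ built from the free $2$-generated algebra modulo $[\theta,\theta]$, proving a term-theoretic characterization of $\delta|_X$ (Claim~\ref{delta_char}) in terms of right difference terms, and then showing in Claim~\ref{delta_neq_eta_2} that $((\overline{x},\overline{y}),(\overline{y},\overline{y}))\notin\delta$ \emph{because} a witnessing term would be a weak difference term. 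None of this is in your sketch. Second, you rely on the false automaticity claim to get $\C C(\theta,\alpha;\delta)$; the paper instead replaces $\delta$ by the larger congruence $\varepsilon$ (the largest congruence below $\eta_2$ agreeing with $\delta$ on the top edge $X$) and proves $\C C(\eta_2,\eta_1+\eta_2;\varepsilon)$ directly in Claim~\ref{cent1}, a nontrivial argument using both Claim~\ref{delta_char} and Claim~\ref{varepsilon_char}. Your mention of Theorem~\ref{better_pentagons} is also a misdirection here: the paper does not use it in this proof, and the pentagon's base is automatically $0=\eta_1\cap\eta_2$ so no normalization is needed. In short, the outline points in the right direction, but the substance of $(5)\Rightarrow(1)$ is missing, and the one concrete shortcut you propose for the centrality is incorrect.
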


\begin{proof}

[$(1)\Rightarrow(2)$]
Assume that
$t(x,y,z)$ is a weak difference term for $\mathcal V$,
$\m a\in \mathcal V$, and $\alpha\in\Con(\m a)$
is abelian.
If $\sigma,\tau\in I[0,\alpha]$ and
$a\stackrel{\sigma}{\equiv} b \stackrel{\tau}{\equiv} c$,
then
$a=t^{\m a}(a,b,b)\stackrel{\tau}{\equiv}
t^{\m a}(a,b,c) \stackrel{\sigma}{\equiv} t^{\m a}(b,b,c)=c$.
This is all that is needed to verify that
$\sigma\circ\tau\subseteq \tau\circ\sigma\;\;(\subseteq \sigma\circ\tau)$.

[$(2)\Rightarrow(3)$]
Every lattice of permuting equivalence relations is modular.

[$(3)\Rightarrow(4)$]
A lattice is modular if and only if it has no pentagon sublattice.
Notice that Item~(3) asserts that when $\alpha$
is abelian, then $I[0,\alpha]$ contains no
pentagon sublattice. Item~(4) asserts something
slightly more: when $\alpha$
is abelian, then $I[0,\alpha]$ contains no
``spanning pentagon'' sublattice, by which we mean
a pentagon whose bottom is $0$
and whose top is $\alpha$.

[$(4)\Rightarrow(5)$]
Item~(5) is identical to Item~(4) except the
pentagons in Item~(5) are more restricted.
In Item~(5) we assert no abelian interval contains
a spanning pentagon
\emph{which also satisfies $\C C(\theta,\alpha;\delta)$}.

[$(5)\Rightarrow(1)$]
This implication is the only nontrivial claim
of the theorem.
We prove it in the contrapositive form:
if $\mathcal V$ does not have a weak
difference term, then it will
contain an algebra with a pentagon
like the one described in Item~(5).

 Let
  \begin{itemize}
\item $\m f=\m f_{\mathcal V}(x,y)$ be the free $\mathcal V$-algebra
  over the set $\{x,y\}$.
\item   $\theta =\Cg {\m f}(x,y)$.
\item $\overline{\m f} = \m f/[\theta,\theta]$.
\item $\overline{x} = x/[\theta,\theta], \overline{y} = y/[\theta,\theta]$.
\item $\overline{\theta} = \theta/[\theta,\theta]=\Cg {\overline{\m f}}(\overline{x},\overline{y})$.
  \end{itemize}

It follows from properties
of the commutator (Theorem~\ref{basic_centrality}~(10))
that $[\overline{\theta},\overline{\theta}]=0$.
  
\begin{clm} \label{free_abelian}
  {\rm ($\overline{\theta}$ is the ``free principal abelian
    congruence'' in $\mathcal V$)}
Suppose that $\m b\in {\mathcal V}$ and $\beta\in\Con(\m b)$
satisfies $[\beta,\beta]=0$. For any $(a,b)\in \beta$
there is a unique homomorphism
$\overline{\varphi}\colon \overline{\m f}\to \m b$
satisfying $\overline{x}\mapsto a, \overline{y}\mapsto b$.  
\end{clm}

\emph{Proof of Claim~\ref{free_abelian}.}
Given  $(a, b)\in\beta$,
let $\varphi\colon \m f=\m f_{\mathcal V}(x,y)\to \m b$
be the homomorphism determined
by $x\mapsto a, y\mapsto b$. Since $\C C(\beta,\beta;0)$
holds in $\m b$,
$\C C(\varphi^{-1}(\beta),\varphi^{-1}(\beta);\ker(\varphi))$
holds in $\overline{\m f}$. Since $(x,y)\in \varphi^{-1}(\beta)$,
we have $\C C(\theta,\theta;\ker(\varphi))$ for
$\theta = \cg(x,y)$ by monotonicity.
Therefore $[\theta,\theta]\leq \ker(\varphi)$ holds.
We may factor $\varphi$ modulo 
$[\theta,\theta]$ as
\[\varphi\colon \m f\to \m f/[\theta,\theta]=
  \overline{\m f}\stackrel{\overline{\varphi}}{\to} \m b\]
  where the first map is the natural map.
  This yields the desired map
  $\overline{\varphi}\colon \overline{\m f}\to \m b\colon
  \overline{x}\mapsto a, \overline{y}\mapsto b$.
  The uniqueness is a consequence of the fact that
  $\overline{\m f}$ is generated by
  $\{\overline{x},\overline{y}\}$,
  so any homomorphism with domain $\overline{\m f}$
  is uniquely determined
  by its values on this set.
\cqed
\bigskip

Let $\m a$ be the subalgebra of
$\overline{\m f}\times \overline{\m f}$
that is generated by the set
\[
G=\{(\overline{x},\overline{x}), (\overline{y},\overline{x}), (\overline{x},\overline{y}), (\overline{y},\overline{y})\}.
\]
Since $\overline{\m f}$ is generated by $\{\overline{x}, \overline{y}\}$,
the universe of $\m a$ is the
reflexive, symmetric, compatible,
binary relation (or ``tolerance'')
generated by the pair $(\overline{x},\overline{y})$
on the algebra $\overline{\m f}$.
Since $G\subseteq \overline{\theta}$ we have
$A\subseteq \overline{\theta}$, so in fact
$\m a\leq \overline{\m f}(\overline{\theta})\leq \overline{\m f}\times
\overline{\m f}$.
This is enough to draw some conclusions about $\m a$.
The most tautological conclusion that follows from
$A\subseteq \overline{\theta}$
is that if $(\overline{u},\overline{v})\in A$,
then $(\overline{u},\overline{v})\in \overline{\theta}$,
so any pair in $\m a$ generates an abelian congruence
in $\overline{\m f}$.
This fact will be used without reminders.
A less obvious conclusion is that,
since $\overline{\theta}$ is an abelian congruence
that relates $\overline{x}$ to $\overline{y}$
in $\overline{\m f}$, the congruence
$\overline{\theta}_1\times \overline{\theta}_2$
is an abelian congruence on
$\overline{\m f}\times\overline{\m f}$ whose
restriction to $\m a$ is an abelian congruence
that
relates any two elements of $G$.

Let $\eta_1$ and $\eta_2$ be the coordinate projection
kernels of $\overline{\m f}\times \overline{\m f}$
restricted to $\m a$.
Following Example~\ref{simple_ex}, let $\delta =
\Cg {\m a}(((\overline{x},\overline{x}),(\overline{y},\overline{x})))$.
Let $X = (\overline{y},\overline{y})/\eta_2$ be the 
$\eta_2$-class of $(\overline{y},\overline{y})$.
The set $X$ plays the role of the ``top edge''
in Example~\ref{simple_ex}.
Elements of $X$ have the form $(\overline{P},\overline{y})$
for certain elements $\overline{P}\in \overline{\m f}$
satisfying 
$\overline{P}\stackrel{\overline{\theta}}{\equiv}
\overline{y}\stackrel{\overline{\theta}}{\equiv}\overline{x}$
in $\overline{\m f}$.

\begin{clm} \label{delta_char}
{\rm  (Characterization of $\delta|_X$)}
  Two pairs $(\overline{P},\overline{y})$ and $(\overline{Q},\overline{y})$
  lying in $X$
  are $\delta$-related if and only if there is a ternary
  $\mathcal V$-term $t_{\overline{P},\overline{Q}}(x,y,z)$ such that
\begin{enumerate}
\item[($\dagger$)]
  $t_{\overline{P},\overline{Q}}$ is a right difference
  term for $\mathcal V$ (cf.\ Definition~\ref{left_right}), and
\item[($\ddagger$)]  $t_{\overline{P},\overline{Q}}^{\overline{\m f}}(\overline{y},\overline{x},\overline{P})=\overline{Q}$.
\end{enumerate}
\end{clm}

\emph{Proof of Claim~\ref{delta_char}.}
Recall that $\delta$ is the principal
congruence on $\m a$ that is generated by
the pair
$((\overline{x},\overline{x}),(\overline{y},\overline{x}))$.
Since $X$ is an $\eta_2$-class and $\delta\leq \eta_2$,
if $(\overline{P},\overline{y})$ and $(\overline{Q},\overline{y})$
  lying in $X$
  are $\delta$-related, then they are connected
  by a Maltsev chain that lies entirely inside $X$.
  A link of such a Maltsev chain has the form
  $(f((\overline{x},\overline{x})),f((\overline{y},\overline{x})))$
or  $(f((\overline{y},\overline{x})),f((\overline{x},\overline{x})))$  
  for some polynomial $f\in \textrm{Pol}_1(\m a)$.
  Since $\m a$ is generated by
  the set $G=\{(\overline{x},\overline{x}), (\overline{y},\overline{x}), (\overline{x},\overline{y}), (\overline{y},\overline{y})\}$,
  we may assume that the parameters of the polynomial $f$
  lie in $G$, and hence we may write
  \[
  f((z,w))=s^{\m a}((z,w),(\overline{x},\overline{x}), (\overline{y},\overline{x}), (\overline{x},\overline{y}), (\overline{y},\overline{y}))
  \]
  for some $5$-ary $\mathcal V$-term $s$. If
$(f((\overline{x},\overline{x})),f((\overline{y},\overline{x})))=  
((\overline{R},\overline{y}),(\overline{S},\overline{y}))$
is a link in a Maltsev chain in $X$, then there
must exist such a $5$-ary term $s$ such that
\[
\begin{array}{rl}
  s^{\m a}((\overline{x},\overline{x}),(\overline{x},\overline{x}), (\overline{y},\overline{x}), (\overline{x},\overline{y}), (\overline{y},\overline{y})) &= (\overline{R},\overline{y})\\
s^{\m a}((\overline{y},\overline{x}),(\overline{x},\overline{x}), (\overline{y},\overline{x}), (\overline{x},\overline{y}), (\overline{y},\overline{y})) &= 
(\overline{S},\overline{y}),
\end{array}
\]
which simplifies to the coordinate equations
\begin{equation}\label{coordinatewise}
\begin{array}{rl}
s^{\overline{\m f}}(\overline{x},\overline{x},\overline{y},\overline{x},\overline{y}) &= \overline{R}\\
s^{\overline{\m f}}(\overline{y},\overline{x},\overline{y},\overline{x},\overline{y}) &= \overline{S}\\
s^{\overline{\m f}}(\overline{x},\overline{x},\overline{x},\overline{y},\overline{y}) &= \overline{y}.
\end{array}
\end{equation}
Let $t_{\overline{R},\overline{S}}(x,y,z) = s(x,y,y,z,z)$. 

\begin{subclm} \label{deltacharsubclaim1}
  {\rm (The $(\dagger)$-part of ``only if''
    when $((\overline{R},\overline{y}),(\overline{S},\overline{y}))$
    is a Maltsev link)}  \;\;\;

\begin{center}    
  $t_{\overline{R},\overline{S}}$ is a right difference
  term for $\mathcal V$. 
\end{center}
  \end{subclm}

\emph{Proof of Subclaim~\ref{deltacharsubclaim1}.}
We derive from the definition of $t_{\overline{R},\overline{S}}$
and the third equation in
\eqref{coordinatewise} that
$t^{\overline{\m f}}_{\overline{R},\overline{S}}(\overline{x},\overline{x},\overline{y})=
s^{\overline{\m f}}(\overline{x},\overline{x},\overline{x},\overline{y},\overline{y}) =\overline{y}$.
The claim then follows from the fact that
$\overline{\theta} = \Cg {\overline{\m f}}(\overline{x},\overline{y})$
is the ``free principal abelian congruence'' in $\mathcal V$.
Specifically, given any $\m b\in {\mathcal V}$ and
any pair $(a,b)$ from an abelian congruence
of $\m b$, Claim~\ref{free_abelian} guarantees
a unique homomorphism
$\overline{\varphi}\colon \overline{\m f}\to \m b$
satisfying $\overline{x}\mapsto a, \overline{y}\mapsto b$.
Applying this homomorphism to the equality
$t^{\overline{\m f}}_{\overline{R},\overline{S}}(\overline{x},\overline{x},\overline{y})=\overline{y}$ yields 
$t^{\m b}_{\overline{R},\overline{S}}(a,a,b)=b$.
This is all that is required to prove that 
$t_{\overline{R},\overline{S}}$ is right difference term for $\mathcal V$.
(Cf.\ Definition~\ref{left_right}~(4).)
  \cqed
  \bigskip

\begin{subclm} \label{deltacharsubclaim2}
  {\rm (The $(\ddagger)$-part of ``only if''
    when $((\overline{R},\overline{y}),(\overline{S},\overline{y}))$
    is a Maltsev link)}  \;\;\;

\begin{center}  
  $t_{\overline{R},\overline{S}}^{\overline{\m f}}(\overline{y},\overline{x},\overline{R})=\overline{S}$.
\end{center}  
\end{subclm}

\emph{Proof of Subclaim~\ref{deltacharsubclaim2}.}
The following matrix is a $\overline{\theta},\overline{\theta}$-matrix
in $\overline{\m f}$:
\begin{equation}\label{tricky_matrix}
\begin{bmatrix}
  s^{\overline{\m f}}(\overline{x},\overline{x},\overline{y},\overline{x},\overline{y})&
s^{\overline{\m f}}(\overline{x},\overline{x},\overline{x},\overline{R},\overline{R})\\
  s^{\overline{\m f}}(\overline{y},\overline{x},\overline{y},\overline{x},\overline{y})&
s^{\overline{\m f}}(\overline{y},\overline{x},\overline{x},\overline{R},\overline{R})
\end{bmatrix}=
\begin{bmatrix}
\overline{R} &  t_{\overline{R},\overline{S}}^{\overline{\m f}}(\overline{x},\overline{x},\overline{R})\\
\overline{S} &  t_{\overline{R},\overline{S}}^{\overline{\m f}}(\overline{y},\overline{x},\overline{R})
\end{bmatrix}
=
\begin{bmatrix}
\overline{R} &  \overline{R} \\
\overline{S} &  t_{\overline{R},\overline{S}}^{\overline{\m f}}(\overline{y},\overline{x},\overline{R})
\end{bmatrix}.
\end{equation}
The justifications for the claims
that these matrices are equal and that the
leftmost is an 
$\overline{\theta},\overline{\theta}$-matrix
follow from the facts that 
(i) $(\overline{x},\overline{y}), (\overline{y},\overline{y}),
(\overline{R},\overline{y}), (\overline{S},\overline{y})$
belong to $X\subseteq A$, hence
$\overline{x},\overline{y},\overline{R},\overline{S}$
belong to the same $\overline{\theta}$-class,
(ii) equations \eqref{coordinatewise} hold,
(iii) 
$t_{\overline{R},\overline{S}}(x,y,z):=s(x,y,y,z,z)$, and
(iv) 
$t_{\overline{R},\overline{S}}$ is a right difference
term for $\mathcal V$
(Subclaim~\ref{deltacharsubclaim1}).
Item (i) is enough to show that the leftmost
matrix is a $\overline{\theta},\overline{\theta}$-matrix,
Items (ii) and (iii) are enough to show that the leftmost
matrix reduces to the middlemost,
and Item (iv) is enough to show that the middlemost
matrix reduces to the rightmost one.

Since $\overline{\theta}$ is abelian and the top
row of the matrix in \eqref{tricky_matrix} is constant,
the bottom row must also be constant.
\cqed
\bigskip

\begin{subclm} \label{only_if_delta_char}
The ``only if'' part of Claim~\ref{delta_char} holds.
\end{subclm}

\emph{Proof of Subclaim~\ref{only_if_delta_char}.}
Subclaims~\ref{deltacharsubclaim1} and \ref{deltacharsubclaim2}
prove the ``only if'' part of
Claim~\ref{delta_char}
    when $((\overline{R},\overline{y}),(\overline{S},\overline{y}))$
    is a Maltsev link.
Maltsev links are Maltsev chains of length $1$.
Here we prove that the ``only if'' part holds
for Maltsev chains of any length. For this,
let $\Omega$ be the relation on $X$ consisting of all pairs
$((\overline{P},\overline{y}), (\overline{Q},\overline{y}))$
which satisfy ($\dagger$) and
($\ddagger$) of Claim~\ref{delta_char}.
$\Omega$ will contain all pairs
$((\overline{R},\overline{y}), (\overline{S},\overline{y}))$
that are Maltsev links, so to prove this subclaim
it suffices to prove
that $\Omega$ is an equivalence relation on $X$.
\bigskip

($\Omega$ is {\bf reflexive})
Given
$((\overline{P},\overline{y}), (\overline{P},\overline{y}))$,
the third projection term $t_{\overline{P},\overline{P}}(x,y,z) := z$
witnesses membership in $\Omega$.
To see this, note that both ($\dagger$) and
($\ddagger$) are trivial when
$\overline{P}=\overline{Q}$ and $t_{\overline{P},\overline{P}}(x,y,z)=z$:

\begin{enumerate}
\item[($\dagger$)]
  $t_{\overline{P},\overline{P}}(x,y,z):=z$
  is a right difference term for $\mathcal V$.
  (It is even right Maltsev, which is formally stronger.)
\item[($\ddagger$)]
$t_{\overline{P},\overline{P}}^{\overline{\m f}}(\overline{y},\overline{x},\overline{P})=\overline{P}$.
\end{enumerate}
\bigskip

($\Omega$ is {\bf symmetric})
Assume that the ternary term
$s_{\overline{P},\overline{Q}}(x,y,z)$ witnesses membership in $\Omega$ for the pair
$((\overline{P},\overline{y}), (\overline{Q},\overline{y}))$.
We argue that the ternary term $t_{\overline{P},\overline{Q}}(x,y,z):=s_{\overline{P},\overline{Q}}(y,x,z)$ obtained by swapping the first two variables in $s_{\overline{P},\overline{Q}}(x,y,z)$
witnesses membership in $\Omega$ for
$((\overline{Q},\overline{y}), (\overline{P},\overline{y}))$:
\begin{enumerate}
\item[($\dagger$)]
$t_{\overline{P},\overline{Q}}$ is a right difference term for $\mathcal V$.
\end{enumerate}
\medskip

Reason:
Choose $(a,b)$ generating an abelian
congruence in some $\m b\in {\mathcal V}$.
$t_{\overline{P},\overline{Q}}^{\m b}(a,a,b)=s_{\overline{P},\overline{Q}}^{\m b}(a,a,b)=b$.
\medskip

\begin{enumerate}
\item[($\ddagger$)]
  $t_{\overline{P},\overline{Q}}^{\overline{\m f}}(\overline{y},\overline{x},\overline{Q})=\overline{P}$.
\end{enumerate}
  
\medskip

Reason:
We know from ($\dagger$) for $s_{\overline{P},\overline{Q}}$
and from the fact that 
$s_{\overline{P},\overline{Q}}(x,y,z)$ witnesses membership in $\Omega$ for 
$((\overline{P},\overline{y}), (\overline{Q},\overline{y}))$
that
\[s_{\overline{P},\overline{Q}}^{\overline{\m f}}(\overline{y},\overline{x},\overline{P})
=\overline{Q}=
s_{\overline{P},\overline{Q}}^{\overline{\m f}}(\overline{x},\overline{x},\overline{Q}).
\]
The following is a $\overline{\theta},\overline{\theta}$-matrix in
$\overline{\m f}$.
\begin{equation}\label{tricky_matrix2}
\begin{bmatrix}
s_{\overline{P},\overline{Q}}^{\overline{\m f}}(\overline{y},\overline{x},\overline{P})&
s_{\overline{P},\overline{Q}}^{\overline{\m f}}(\overline{x},\overline{x},\overline{Q})\\
s_{\overline{P},\overline{Q}}^{\overline{\m f}}(\overline{y},\overline{y},\overline{P})&
s_{\overline{P},\overline{Q}}^{\overline{\m f}}(\overline{x},\overline{y},\overline{Q})
\end{bmatrix}=
\begin{bmatrix}
\overline{Q}&
\overline{Q}\\
\overline{P}&
s_{\overline{P},\overline{Q}}^{\overline{\m f}}(\overline{x},\overline{y},\overline{Q})
\end{bmatrix}=
\begin{bmatrix}
\overline{Q}&
\overline{Q}\\
\overline{P}&
t_{\overline{P},\overline{Q}}^{\overline{\m f}}(\overline{y},\overline{x},\overline{Q})
\end{bmatrix}.
\end{equation}
Since $\overline{\theta}$ is abelian and this matrix
is constant on the first row we must have
$t_{\overline{P},\overline{Q}}^{\overline{\m f}}(\overline{y},\overline{x},\overline{Q})=\overline{P}$, which is the statement to be proved.
\bigskip

($\Omega$ is {\bf transitive})
Assume that the ternary term
$r_{\overline{P},\overline{Q}}(x,y,z)$ witnesses membership in $\Omega$ for the pair
$((\overline{P},\overline{y}), (\overline{Q},\overline{y}))$
and that $s_{\overline{Q},\overline{W}}(x,y,z)$ witnesses membership in $\Omega$ for the pair
$((\overline{Q},\overline{y}), (\overline{W},\overline{y}))$.
We shall argue that the ternary term $t_{\overline{P},\overline{W}}(x,y,z):=s_{\overline{Q},\overline{W}}(x,y,r_{\overline{P},\overline{Q}}(x,y,z))$
  witnesses that
  $((\overline{P},\overline{y}), (\overline{W},\overline{y}))$
    belongs to $\Omega$.
\begin{enumerate}
\item[($\dagger$)]
  $t_{\overline{P},\overline{W}}$
  is a right difference term for $\mathcal V$. 
\end{enumerate}
\medskip

Reason:
Choose $(a,b)$ generating an abelian
congruence in some $\m b\in {\mathcal V}$.
We have
$t_{\overline{P},\overline{W}}^{\m b}(a,a,b)=
s_{\overline{Q},\overline{W}}^{\m b}(a,a,
r_{\overline{P},\overline{Q}}^{\m b}(a,a,b))=
s_{\overline{Q},\overline{W}}^{\m b}(a,a,b)=b$.
\medskip

\begin{enumerate}
\item[($\ddagger$)]  $t^{\overline{\m f}}(\overline{y},\overline{x},\overline{P})=\overline{W}$.
\end{enumerate}
\medskip

Reason:
$t_{\overline{P},\overline{W}}^{\overline{\m f}}(\overline{y},\overline{x},\overline{P})=
s_{\overline{Q},\overline{W}}^{\overline{\m f}}(\overline{y},\overline{x},
r_{\overline{P},\overline{Q}}^{\overline{\m f}}(\overline{y},\overline{x},\overline{P}))=
s_{\overline{Q},\overline{W}}^{\overline{\m f}}(\overline{y},\overline{x},\overline{Q})=
\overline{W}$.
\cqed
\bigskip

\begin{subclm}\label{if}
{\rm (``if'' statement in Claim~\ref{delta_char})}  
Assume that
$(\overline{P},\overline{y})$ and $(\overline{Q},\overline{y})$
lie in $X$ and there is a ternary
  $\mathcal V$-term $t(x,y,z)$ such that
\begin{enumerate}
\item[($\dagger$)]
  $t_{\overline{P},\overline{Q}}$ is a right difference
  term for $\mathcal V$, and
\item[($\ddagger$)]  $t_{\overline{P},\overline{Q}}^{\overline{\m f}}(\overline{y},\overline{x},\overline{P})=\overline{Q}$.
\end{enumerate}
  Then $(\overline{P},\overline{y})$ and $(\overline{Q},\overline{y})$
  are $\delta$-related.
\end{subclm}

\emph{Proof of Subclaim~\ref{if}.}
Since $t_{\overline{P},\overline{Q}}$ is a right difference term
and $\overline{x}, \overline{y}, \overline{P}, \overline{Q}$
belong to a single class of the abelian
congruence $\overline{\theta}$ we have both
$t_{\overline{P},\overline{Q}}^{\overline{\m f}}(\overline{x},\overline{x},\overline{P})=\overline{P}$
and
$t_{\overline{P},\overline{Q}}^{\overline{\m f}}(\overline{x},\overline{x},\overline{y})=\overline{y}$.
Hence, working with pairs in $\m a$,
\[
\begin{array}{rl}
  (\overline{P},\overline{y})&=t_{\overline{P},\overline{Q}}^{\m a}(\underline{(\overline{x},\overline{x})},(\overline{x},\overline{x}),(\overline{P},\overline{y}))\\
  \vphantom{|}&\\
  &\stackrel{\delta}{\equiv}
  t_{\overline{P},\overline{Q}}^{\m a}(\underline{(\overline{y},\overline{x})},(\overline{x},\overline{x}),(\overline{P},\overline{y}))\\
  \vphantom{|}&\\  
&= (\overline{Q},\overline{y}).
\end{array}
\]
In moving from the first line to the second we have underlined
the only change, indicating where we replaced
$(\overline{x},\overline{x})$ with the $\delta$-related
pair $(\overline{y},\overline{x})$. In moving from
the second line to the third we made coordinatewise use
of ($\dagger$) and ($\ddagger$) for $t_{\overline{P},\overline{Q}}$.
\cqed
\bigskip

This completes the proof of Claim~\ref{delta_char}.
\cqed
\bigskip

\begin{clm} \label{delta_neq_eta_2}
$((\overline{x},\overline{y}),(\overline{y},\overline{y}))\notin\delta.$
\end{clm}

\emph{Proof of Claim~\ref{delta_neq_eta_2}.}
Assume instead
that $((\overline{x},\overline{y}),(\overline{y},\overline{y}))\in\delta.$
Then, for $\overline{P}=\overline{x}$ and $\overline{Q}=\overline{y}$,
we have
$((\overline{P},\overline{y}),(\overline{Q},\overline{y}))\in\delta|_X$.
Claim~\ref{delta_char} guarantees the existence of
a ternary term $t_{\overline{x},\overline{y}}(x,y,z)$ such that
($\dagger$) $t_{\overline{x},\overline{y}}$ is a right difference term for $\mathcal V$
and ($\ddagger$)
\begin{equation}\label{1-sided-diff}
t_{\overline{x},\overline{y}}(\overline{y},\overline{x},\overline{x})=\overline{y}.
\end{equation}
In Claim~\ref{free_abelian} 
we showed that 
$\overline{x}$ and $\overline{y}$
are generators of the ``free principal abelian congruence''
in $\mathcal V$.
We used this information in Subclaim~\ref{deltacharsubclaim1}
to prove that the equality
$t_{\overline{R},\overline{S}}(\overline{x},\overline{x},\overline{y})=\overline{y}$
in $\overline{\m f}$
suffices to prove that
$t_{\overline{R},\overline{S}}$ is a right difference term for $\mathcal V$.
The same argument can be applied here to prove that
\eqref{1-sided-diff}
suffices to prove that
$t_{\overline{x},\overline{y}}$ is a left difference term for $\mathcal V$.
But now $t_{\overline{x},\overline{y}}$ is both a right
and a left difference term, which
contradicts our initial assumption that 
$\mathcal V$ has no weak difference term (= left and right difference term).
\cqed
\bigskip

At present we have no understanding of how $\delta$
behaves off of the set $X$. To deal with this,
we enlarge $\delta$ to
the largest congruence below $\eta_2$
that ``behaves like $\delta$ on $X$''.
Since $X$ is an $\eta_2$-class, all
congruences $\gamma$ satisfying
$\gamma\leq \eta_2$ have the property
that $X$ is a union of $\gamma$-classes.
This implies that the set of those $\gamma$
satisfying
\begin{enumerate}
  \item[(i)] $\gamma\leq \eta_2$
    and
  \item[(ii)] $\gamma|_X\subseteq \delta|_X$
\end{enumerate}
contains $\delta$ and is closed under complete join.
Let $\varepsilon\in \Con(\m a)$ be the
join of all congruences
satisfying (i) and (ii).
Since $\delta$ is a joinand,
we get that $\delta\leq \varepsilon\leq\eta_2$ and
$\varepsilon|_X = \delta|_X$.
Since $((\overline{x},\overline{y}),(\overline{y},\overline{y}))\in\eta_2$
and, by Claim~\ref{delta_neq_eta_2}, 
$((\overline{x},\overline{y}),(\overline{y},\overline{y}))\notin\delta|_X$, we get that $\eta_2|_X\neq \delta|_X=\varepsilon|_X$, and therefore
$\delta\leq \varepsilon < \eta_2$.
This is enough to imply that $\{\eta_1, \eta_2, \varepsilon\}$ generates
a pentagon in $\Con(\m a)$ that is labeled like the one in
Figure~\ref{fig9}.
In this figure, it might happen that $\delta=\varepsilon$,
but no other pair of differently-labeled
congruences in the figure could be equal.

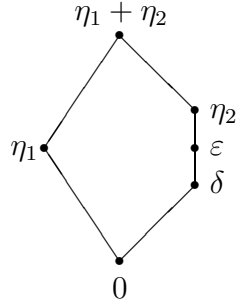
\begin{figure}[ht]
\begin{center}
\setlength{\unitlength}{1mm}
\begin{picture}(20,33)
\put(0,15){\circle*{1.2}}
\put(10,0){\circle*{1.2}}
\put(10,30){\circle*{1.2}}
\put(20,10){\circle*{1.2}}
\put(20,15){\circle*{1.2}}
\put(20,20){\circle*{1.2}}

\put(10,0){\line(-2,3){10}}
\put(10,0){\line(1,1){10}}
\put(10,30){\line(-2,-3){10}}
\put(10,30){\line(1,-1){10}}
\put(20,20){\line(0,-1){10}}

\put(-4.5,14){$\eta_1$}
\put(22,9){$\delta$}
\put(22,14){$\varepsilon$}
\put(22,19){$\eta_2$}
\put(4,32){$\eta_1+\eta_2$}
\put(9,-5){$0$}
\end{picture}
\bigskip

\caption{\sc $\delta\leq \varepsilon < \eta_2,\;\; \delta|_X=\varepsilon|_X$.}\label{fig9}
\end{center}
\end{figure}

\begin{clm} \label{varepsilon_char}
{\rm  (Characterization of $\varepsilon$)}
\[
\varepsilon = \{(a,b)\in\eta_2\;|\; \forall f\in\textrm{\rm Pol}_1(\m a)\;\big(f(a), f(b))\in X\Rightarrow (f(a),f(b))\in\delta\big)\}.
\]
\end{clm}

\emph{Proof of Claim~\ref{varepsilon_char}.}
It is not difficult to see that the relation on the right hand
side of the equality symbol is (i) an equivalence relation contained
in $\eta_2$ (ii) that is closed under the application of unary
polynomials and (iii) whose restriction to $X$ is contained in $\delta$.
It is also clear that the relation on the right hand
side of the equality symbol contains
all other relations with these three properties,
hence it is the largest congruence $\gamma\leq \eta_2$
satisfying $\gamma|_X\subseteq \delta|_X$.
This is enough to conclude that
the relation on the right hand
side of the equality symbol is $\varepsilon$.
\cqed
\bigskip

\begin{clm} \label{cent1}
{\rm   (i)} $\eta_1+\eta_2$ is abelian and {\rm (ii)}
  $\C C(\eta_2,\eta_1+\eta_2;\varepsilon)$ holds.
\end{clm}

\emph{Proof of Claim~\ref{cent1}.}
For (i), we already noted in the paragraph
preceding Claim~\ref{delta_char},
$\overline{\theta}_1\times \overline{\theta}_2$ is an abelian
congruence of $\m a$. Since $\m a\leq \overline{\m f}(\overline{\theta})$
we have
$\eta_1, \eta_2\leq \overline{\theta}_1\times \overline{\theta}_2$.
From this we derive that
$\eta_1+\eta_2\leq \overline{\theta}_1\times \overline{\theta}_2$
and then that $\eta_1+\eta_2$ is an abelian congruence
of $\m a$.

For (ii), 
we must show that, given an $\eta_2,(\eta_1+\eta_2)$-matrix
\begin{equation}\label{varepsilon_matrix}
\begin{bmatrix}
t(\wec{a},\wec{u}) &   t(\wec{a},\wec{v}) \\
t(\wec{b},\wec{u}) &   t(\wec{b},\wec{v}) 
\end{bmatrix}=
\begin{bmatrix}
p &  q \\
r &  s
\end{bmatrix}, \;\;\;
\end{equation}
if $p\stackrel{\varepsilon}{\equiv} q$, then
$r\stackrel{\varepsilon}{\equiv} s$.
We shall assume that $p\stackrel{\varepsilon}{\equiv} q$ and
$r\not\stackrel{\varepsilon}{\equiv} s$ and argue
to a contradiction.

We do have $r\stackrel{\eta_2}{\equiv} p
\stackrel{\varepsilon}{\equiv} q\stackrel{\eta_2}{\equiv} s$.
Since $\varepsilon\leq\eta_2$, all elements $p, q, r, s$
are $\eta_2$-related. If indeed 
$r\not\stackrel{\varepsilon}{\equiv} s$,
then by Claim~\ref{varepsilon_char}
there is a unary polynomial $f(x)$
of $\m a$ such that $f(r), f(s)\in X$ and 
$f(r)\not\stackrel{\delta}{\equiv} f(s)$.
We may apply $f$ to the matrix
in \eqref{varepsilon_matrix} to obtain another
$\eta_2,(\eta_1+\eta_2)$-matrix
\begin{equation}\label{varepsilon_matrix_2}
\begin{bmatrix}
f(p) &  f(q) \\
f(r) &  f(s)
\end{bmatrix}. \;\;\;
\end{equation}
The new matrix has the same properties
as the old one, except now we also have
all entries in $X = (\overline{y},\overline{y})/\eta_2$. Let us write
$(\overline{P},\overline{y}), (\overline{Q},\overline{y}), (\overline{R},\overline{y}), (\overline{S},\overline{y})$
for $f(p), f(q), f(r), f(s)$.

By Claim~\ref{delta_char}, the fact that
$(\overline{P},\overline{y})
\stackrel{\delta|_X}{\equiv}(\overline{Q},\overline{y})$ holds
yields a right difference term
$t_{\overline{P},\overline{Q}}$
such that
$
t_{\overline{P},\overline{Q}}(\overline{y},\overline{x},\overline{P})=\overline{Q}.
$
We must have 
$
t_{\overline{P},\overline{Q}}(\overline{y},\overline{x},\overline{R})\neq\overline{S},
$
or the same right difference term
would yield
$(\overline{R},\overline{y})\stackrel{\delta|_X}{\equiv}(\overline{S},\overline{y})$,
which is false.

Now consider the $\eta_2,(\eta_1+\eta_2)$-matrix
\begin{equation}\label{varepsilon_matrix_3}
t_{\overline{P},\overline{Q}}\left(
\begin{bmatrix}
(\overline{y},\overline{y}) &  (\overline{x},\overline{y}) \\
(\overline{y},\overline{y}) &  (\overline{x},\overline{y}) \\
\end{bmatrix},
\begin{bmatrix}
(\overline{x},\overline{y}) &  (\overline{x},\overline{y}) \\
(\overline{x},\overline{y}) &  (\overline{x},\overline{y}) \\
\end{bmatrix},
\begin{bmatrix}
(\overline{P},\overline{y}) &  (\overline{Q},\overline{y}) \\
(\overline{R},\overline{y}) &  (\overline{S},\overline{y})
\end{bmatrix}
\right)
=
\begin{bmatrix}
(\overline{Q},\overline{y}) & (\overline{Q},\overline{y}) \\
(t_{\overline{P},\overline{Q}}(\overline{y},\overline{x},\overline{R}),\overline{y})
  & (\overline{S},\overline{y})
\end{bmatrix}.
\end{equation}
The rightmost matrix witnesses that $\C C(\eta_2,\eta_1+\eta_2;0)$ fails,
since the top row is constant while
the bottom row is not, since
$
t_{\overline{P},\overline{Q}}(\overline{y},\overline{x},\overline{R})\neq\overline{S}.
$
But the failure of 
$\C C(\eta_2,\eta_1+\eta_2;0)$ contradicts
$\C C(\eta_1+\eta_2,\eta_1+\eta_2;0)$, which we established in part (i)
of this claim. This completes the proof of (ii).
\cqed
\bigskip

We have constructed the desired pentagon, so to
complete the proof of $\neg (1)\;\Rightarrow\; \neg(5)$
of Theorem~\ref{characterization_of_weak} we just have to
explain how to relabel the elements of the pentagon.
Relabel each congruence in the sequence
$(\eta_1+\eta_2,\eta_1,\eta_2,\varepsilon,0)$
of the pentagon of in Figure~\ref{fig9}
with the corresponding label in 
$(\alpha,\beta,\theta,\delta,0)$
to obtain the pentagon in Figure~\ref{fig8}.
From Claim~\ref{cent1} we have that
$\alpha$ is abelian and $\C C(\theta,\alpha;\delta)$,
as desired.
\end{proof}

The next result represents a half-step toward proving
that a variety with a Taylor term and commutative commutator
has a difference term.
It turns out that we only need to assume that
commutativity of the commutator on comparable
pairs of congruences to prove this result.

\begin{thm} \label{commutative_implies_weak}
  If $\mathcal V$ has a Taylor term and,
  whenever $x \leq y$ in $\Con(\m a)$ for
  $\m a\in {\mathcal V}$,
  the equation $[x,y]=[y,x]$
  is satisfied, then
  $\mathcal V$ must have a weak difference term.
\end{thm}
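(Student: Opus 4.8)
The plan is to prove the contrapositive, leaning on the characterization of varieties with a weak difference term that has just been established. So I would assume that $\mathcal V$ has a Taylor term but \emph{no} weak difference term, and then exhibit an algebra in $\mathcal V$ on which the commutator is not commutative on some comparable pair of congruences — contradicting the hypothesis. The assumed failure of a weak difference term, fed into the implication $\neg(1)\Rightarrow\neg(5)$ of Theorem~\ref{characterization_of_weak}, yields an algebra $\m a\in\mathcal V$ whose congruence lattice contains a pentagon labeled as in Figure~\ref{fig8} — top $\alpha$, incomparable $\beta$, critical chain $\delta<\theta$ below $\alpha$, bottom $0$ — such that $[\alpha,\alpha]=0$ and $\C C(\theta,\alpha;\delta)$ holds. (Interestingly, only the pentagon shape and the centrality $\C C(\theta,\alpha;\delta)$ will matter; the abelianness of $\alpha$ is not needed.)

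Next I would pass to the quotient $\m b:=\m a/\delta\in\mathcal V$ and compare the two images $[\theta/\delta,\alpha/\delta]$ and $[\alpha/\delta,\theta/\delta]$ of the commutator, noting that $\theta/\delta\leq\alpha/\delta$. On one hand, since $\C C(\theta,\alpha;\delta)$ holds, $\delta$ itself is one of the congruences $\gamma\geq\delta$ with $\C C(\theta,\alpha;\gamma)$, so the relative commutator $[\theta,\alpha]_\delta$ equals $\delta$; combining this with the identity $[\mu/\delta,\nu/\delta]=[\mu,\nu]_\delta/\delta$ (valid because $\delta\leq\theta,\alpha$; this is the identity recorded in the proof of Lemma~\ref{asymmetry}) gives $[\theta/\delta,\alpha/\delta]=0$ in $\m b$. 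On the other hand, the pentagon together with the Taylor term puts me in position to invoke Theorem~\ref{memoir_pentagon_2} with $x=\theta\in I[\delta,\theta]$, which gives $[\alpha,\theta]_\delta=\theta$; the same quotient identity then yields $[\alpha/\delta,\theta/\delta]=\theta/\delta$, and this is nonzero because $\delta<\theta$ strictly in the pentagon. Thus $\m b\in\mathcal V$ carries the comparable pair $\theta/\delta\leq\alpha/\delta$ with $[\theta/\delta,\alpha/\delta]=0<\theta/\delta=[\alpha/\delta,\theta/\delta]$, contradicting the hypothesis that $[x,y]=[y,x]$ holds for all comparable $x\leq y$. Hence $\mathcal V$ must have a weak difference term.

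The one point that needs care — and the reason I would work inside $\m a/\delta$ rather than directly in $\m a$ — is that commutativity of the commutator on the comparable pair $\theta\leq\alpha$ only tells me $[\alpha,\theta]=[\theta,\alpha]\leq\delta$, and this does \emph{not} immediately give $\C C(\alpha,\theta;\delta)$: the centralizer relation is a priori neither symmetric in its first two arguments nor obviously monotone in its third. Reflecting everything through the quotient $\m a/\delta$, by means of the relative-commutator identity and Theorem~\ref{basic_centrality}~(10), converts the hypothesis ``$\C C(\theta,\alpha;\delta)$ holds'' into the clean equality ``$[\theta/\delta,\alpha/\delta]=0$'', after which the clash with Theorem~\ref{memoir_pentagon_2} is immediate. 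I would also double-check that the pentagon produced by Theorem~\ref{characterization_of_weak} is oriented with $\alpha$ on top and critical chain $\delta<\theta$, so that Theorem~\ref{memoir_pentagon_2} applies verbatim — which it is, since the pentagon of Figure~\ref{fig8} has exactly the shape required in Figure~\ref{fig5}.
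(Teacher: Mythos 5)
Your proposal is correct and follows essentially the same route as the paper: contrapositive via Theorem~\ref{characterization_of_weak}, passage to $\m a/\delta$, Theorem~\ref{basic_centrality}~(10) to get $[\theta/\delta,\alpha/\delta]=0$ from $\C C(\theta,\alpha;\delta)$, and the Taylor-term pentagon restriction to show $[\alpha/\delta,\theta/\delta]\neq 0$. The only cosmetic difference is that you invoke Theorem~\ref{memoir_pentagon_2} directly to compute $[\alpha,\theta]_\delta=\theta$, whereas the paper argues by contradiction through Theorem~\ref{memoir_pentagon} plus monotonicity — but the paper itself observes these two formulations are equivalent, so this is not a genuinely different argument. (Your observation that abelianness of $\alpha$ is not used is accurate and matches the paper's proof.)
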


\begin{proof}
We shall assume that $\mathcal V$ has a Taylor term and
does not have a weak difference term and argue
that $\mathcal V$ contains an algebra
with a noncommutative commutator.
Our construction produces an algebra in $\mathcal V$
which has a pair of comparable
congruences $x\leq y$ such that
$[x,y]\neq [y,x]$.

Since we have assumed that $\mathcal V$ does not have
a weak difference term, Theorem~\ref{characterization_of_weak}
guarantees that there is some
algebra $\m a\in {\mathcal V}$ whose congruence
lattice contains a pentagon
\begin{figure}[ht]
\begin{center}
\setlength{\unitlength}{1mm}
\begin{picture}(20,33)
\put(0,15){\circle*{1.2}}
\put(10,0){\circle*{1.2}}
\put(10,30){\circle*{1.2}}
\put(20,10){\circle*{1.2}}
\put(20,20){\circle*{1.2}}

\put(10,0){\line(-2,3){10}}
\put(10,0){\line(1,1){10}}
\put(10,30){\line(-2,-3){10}}
\put(10,30){\line(1,-1){10}}
\put(20,20){\line(0,-1){10}}

\put(-4.5,13){$\beta$}
\put(22,9){$\delta$}
\put(22,19){$\theta$}
\put(9,32){$\alpha$}
\put(9,-5){$0$}
\end{picture}
\bigskip

\caption{\sc Both $[\alpha,\alpha]=0$ and
  $\C C(\theta,\alpha;\delta)$ hold.}\label{fig10}
\end{center}
\end{figure}
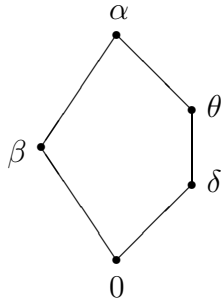
\noindent
where $\alpha$ is abelian and
$\C C(\theta,\alpha;\delta)$ holds.
We claim that the algebra $\m a/\delta\in{\mathcal V}$
has noncommutative commutator.
Specifically, we claim that
\begin{equation}\label{noncom}
[\,\theta/\delta, \alpha/\delta\,]\;=\;0\;\neq \;
[\,\alpha/\delta,\theta/\delta\,].
\end{equation}

To see this, first observe that since
$\C C(\theta,\alpha;\delta)$ holds we have
$[\,\theta/\delta,\alpha/\delta\,]=0$ from
Theorem~\ref{basic_centrality}~(10),
and this is the equality in \eqref{noncom}.

It remains to prove the inequality
$[\,\alpha/\delta,\theta/\delta\,]\neq 0$ from \eqref{noncom}.
If instead we had equality, then again by
Theorem~\ref{basic_centrality}~(10) we would have that
$\C C(\alpha,\theta;\delta)$ holds.
By monotonicity (Theorem~\ref{basic_centrality}~(1)),
we would have
$\C C(\beta,\theta;\delta)$ holds.
This contradicts Theorem~\ref{memoir_pentagon}.
Thus, for $x=\theta/\delta$ and $y=\alpha/\delta$
we have $x\leq y$ and $[x,y]\neq [y,x]$.
\end{proof}

Next we begin a sequence of results to strengthen the conclusion
of Theorem~\ref{commutative_implies_weak}
from ``weak difference term'' to ``difference term''.
The argument is completed in Theorem~\ref{main1} below.

\begin{thm}\label{stable}
Assume that $\mathcal V$ has a weak difference term.
If $\m a$ in $\mathcal V$ has congruences satisfying
\begin{enumerate}
\item $\alpha\geq \theta\geq \delta$, and
\item $[\alpha,\theta]=0$, then
\end{enumerate}
$\C C(\theta,\alpha;\delta)$ holds.
\end{thm}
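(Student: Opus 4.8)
The plan is to reduce the statement to a single application of the term condition for $[\alpha,\theta]=0$. Since $[\alpha,\theta]=0$ we have $\C C(\alpha,\theta;0)$, and hence $\C C(\theta,\theta;0)$ by monotonicity (Theorem~\ref{basic_centrality}~(1)); thus $\theta$ is abelian. Fix a weak difference term $d$ for $\mathcal V$ (Definition~\ref{left_right}). Because $\theta$ is abelian, on each $\theta$-class $B$ of any member of $\mathcal V$ the operation $d$ is an affine difference operation: $d(x,x,y)=y$ and $d(x,y,y)=x$ for $x,y\in B$, and the usual abelian-group identities hold inside $B$; in particular $q=d(s,r,p)$ forces $r=d(s,q,p)$ when $p,q,r,s\in B$. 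Now to verify $\C C(\theta,\alpha;\delta)$, take any $\theta,\alpha$-matrix
\[
\begin{bmatrix}p&q\\r&s\end{bmatrix}=\begin{bmatrix}t(\wec a,\wec u)&t(\wec a,\wec v)\\t(\wec b,\wec u)&t(\wec b,\wec v)\end{bmatrix},\qquad \wec a\wrel{\theta}\wec b,\ \ \wec u\wrel{\alpha}\wec v,\ \ (p,q)\in\delta,
\]
and aim to prove $(r,s)\in\delta$. First note that all four entries lie in one $\theta$-class $B$: from $\wec a\wrel{\theta}\wec b$ we get $p\stackrel{\theta}{\equiv}r$ and $q\stackrel{\theta}{\equiv}s$, while $(p,q)\in\delta\le\theta$ gives $p\stackrel{\theta}{\equiv}q$; transitivity of $\theta$ then collects $p,q,r,s$ into a common class $B$ and also yields $(r,s)\in\theta$. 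So every Mal'cev simplification of $d$ among $p,q,r,s$ used below is legitimate, even though $\alpha$ itself need not be abelian.

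The heart of the argument is the following use of $\C C(\alpha,\theta;0)$. Let $h$ be the term operation
\[
h(\wec x,\wec y):=d\bigl(t(\wec y,\wec x),\,t(\wec y,\wec u),\,t(\wec a,\wec u)\bigr)
\]
(a composite of $d$ and $t$; the occurrences of $\wec u$ and $\wec a$ are held fixed), and form the matrix whose rows use $\wec x\in\{\wec u,\wec v\}$ and whose columns use $\wec y\in\{\wec a,\wec b\}$. Since $\wec u\wrel{\alpha}\wec v$ distinguishes the rows and $\wec a\wrel{\theta}\wec b$ distinguishes the columns, this lies in $M(\alpha,\theta)$:
\[
\begin{bmatrix}h(\wec u,\wec a)&h(\wec u,\wec b)\\ h(\wec v,\wec a)&h(\wec v,\wec b)\end{bmatrix}
=\begin{bmatrix}d(p,p,p)&d(r,r,p)\\ d(q,p,p)&d(s,r,p)\end{bmatrix}
=\begin{bmatrix}p&p\\ q&d(s,r,p)\end{bmatrix},
\]
where $d(r,r,p)=p$ (as $(r,p)\in\theta$ is abelian) and $d(q,p,p)=q$ (as $(q,p)\in\theta$ is abelian). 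Its top row is constant, so $\C C(\alpha,\theta;0)$ forces the bottom row to be constant, i.e.\ $q=d(s,r,p)$. By the group arithmetic inside $B$ this rearranges to $r=d(s,q,p)$. Finally, the unary polynomial $\pi(x):=d(s,x,p)$ of $\m a$ sends $p\mapsto d(s,p,p)=s$ and $q\mapsto d(s,q,p)=r$, so applying $\pi$ to the pair $(p,q)\in\delta$ gives $(s,r)\in\delta$, hence $(r,s)\in\delta$, as required.

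I expect the main obstacle to be choosing $h$ correctly: one needs a term built from $t$ and $d$ whose $M(\alpha,\theta)$-matrix has a \emph{constant top row} — so that the hypothesis $\C C(\alpha,\theta;0)$ provided by $[\alpha,\theta]=0$ actually has content — while its bottom row records the equation $q=d(s,r,p)$ that abelianness of $\theta$ turns into $(r,s)\in\delta$. Naïve attempts, such as feeding $t$ itself (or the ``displacement'' $d(t(\wec b,\wec x),t(\wec a,\wec x),z)$) into an $\alpha,\theta$-matrix, produce matrices whose top rows are not constant, and then $\C C(\alpha,\theta;0)$ says nothing useful. A secondary point requiring care is that $\alpha$ is not assumed abelian, so the Mal'cev laws for $d$ may be invoked only inside a single $\theta$-class — which is exactly the role of the hypothesis $(p,q)\in\delta\le\theta$, since it is what forces $p,q,r,s$ into one such class $B$.
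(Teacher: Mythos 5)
Your proof is correct and is essentially the paper's: both arguments build an auxiliary $\alpha,\theta$-matrix by composing the weak difference term $d$ with the original term $t$, arrange for one row to collapse to a constant using the Maltsev behaviour of $d$ on the common $\theta$-class of $p,q,r,s$, and then invoke $[\alpha,\theta]=0$ to force the other row constant. The only material difference is where you anchor: you fix $p$ in the third slot of $d$, which yields $q=d(s,r,p)$ and therefore needs a rearrangement to $r=d(s,q,p)$ before the polynomial $\pi(x)=d(s,x,p)$ can carry $(p,q)\in\delta$ to $(s,r)\in\delta$, whereas the paper anchors at $r$, obtains $d(q,p,r)=s$, and closes immediately via $s=d(q,p,r)\stackrel{\delta}{\equiv}d(q,q,r)=r$ with no rearrangement. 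Your rearrangement is valid, but the blanket phrase ``the usual abelian-group identities hold inside $B$'' is doing more work than the two cited facts ($d$ Maltsev on abelian blocks, $\theta$ abelian) manifestly license; a crisp justification is a direct $\theta,\theta$-term-condition computation, for instance applying the term $d\bigl(s,d(s,x,y),y\bigr)$ at $x\in\{r,s\}$ and $y\in\{p,r\}$ to get the $\theta,\theta$-matrix $\bigl[\begin{smallmatrix}d(s,q,p)&r\\ s&s\end{smallmatrix}\bigr]$, whose constant bottom row forces $d(s,q,p)=r$. With that one lemma pinned down, the rest of your argument matches the paper's step for step.
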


\begin{proof}
Let $d(x,y,z)$ be some fixed weak difference term for $\mathcal V$.

This is a proof by contradiction, so
assume that the hypotheses hold
and that the conclusion
$\C C(\theta,\alpha;\delta)$ fails.
This failure is witnessed by 
a $\theta,\alpha$-matrix
\[
\begin{bmatrix}p&q\\r&s
\end{bmatrix}=
\begin{bmatrix}
t(\wec{a},\wec{c})&t(\wec{a},\wec{d})\\
t(\wec{b},\wec{c})&t(\wec{b},\wec{d})
\end{bmatrix}
\]
where $(a_i,b_i)\in\theta$, $(c_j,d_j)\in\alpha$,
$(p,q)\in\delta$,
and $(r,s)\notin\delta$.
Since $(r,p)\in\theta$,
$(q,s)\in\theta$, and $(p,q)\in \delta\leq \theta$ we have
$r\stackrel{\theta}{\equiv} p 
\stackrel{\delta}{\equiv} q
\stackrel{\theta}{\equiv} s$, so
$p, q, r$ and $s$ are all $\theta$-related.
From the hypotheses 
(1) $\alpha\geq \theta$, and
(2) $[\alpha,\theta]=0$, we derive that
$[\theta,\theta]=0$ by monotonicity.
This implies that $d(x,y,z)$
acts like a Maltsev operation
on the $\theta$-class containing $p,q,r,s$.
Let $t'(\wec{x},\wec{y})
= d(t(\wec{x},\wec{y}),t(\wec{x},\wec{c}),t(\wec{b},\wec{c}))$.
We have an $\alpha,\theta$-matrix
\[
\begin{bmatrix}
  t'(\wec{a},\wec{c})&t'(\wec{b},\wec{c})\\
  t'(\wec{a},\wec{d})&t'(\wec{b},\wec{d})
\end{bmatrix}
=
\begin{bmatrix}
  d(p,p,r)&d(r,r,r)\\
  d(q,p,r)&d(s,r,r)
\end{bmatrix}
=
\begin{bmatrix}
  r&r\\
  d(q,p,r)&s
\end{bmatrix}.
\]
In moving from the middlemost matrix
to the rightmost matrix we use the fact that
$d$ acts like a Maltsev operation on the
$\theta$-class containing $p,q,r,s$.
Since this matrix is an $\alpha,\theta$-matrix,
the top row is constant, and
$[\alpha,\theta]=0$, we derive that
the
bottom row is constant,
i.e. $d(q,p,r)=s$. This proves
that $s = d(q,\underline{p},r)\stackrel{\delta}{\equiv} d(q,\underline{q},r) = r$,
which contradicts our earlier assumptions that
$(p,q)\in\delta$ and $(r,s)\notin\delta$.
\end{proof}

\begin{thm}
\label{diff_char}
  \textrm{\rm (\cite[Theorem 3.3(a)$\Leftrightarrow$(b)]{diff})}
If $\mathcal V$ is a variety, then the following conditions are equivalent.
\begin{enumerate}
\item[(a)] $\mathcal V$ has a difference term.
\item[(b)]
  For each $\m a\in {\mathcal V}$ the solvability relation
    is a congruence on $\Con(\m a)$ which is preserved by homomorphisms.
 Furthermore, whenever the pentagon $\m N_5$ is a sublattice of
 $\Con(\m a)$, then the critical interval is neutral.
\end{enumerate}
\end{thm}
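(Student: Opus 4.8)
The plan is to establish $(a)\Rightarrow(b)$ and $(b)\Rightarrow(a)$ separately, and the forward direction itself splits into two independent tasks. That for each $\m a\in\mathcal V$ the solvability relation is a congruence on $\Con(\m a)$ which is preserved by homomorphisms follows from known results on varieties with a weak (hence in particular an ordinary) difference term: the difference term detects abelianness locally, from which one checks that $\solv$ respects meets and joins and is carried along surjections. The substantive part of $(a)\Rightarrow(b)$ is the neutrality of pentagon critical intervals, and it is exactly here --- unlike in the weak-difference-term results of Theorem~\ref{characterization_of_weak} --- that the \emph{identity} $d(x,x,y)\approx y$, not merely its restriction to abelian congruences, is used.

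For that claim I argue by contradiction. Suppose $\m N_5\le\Con(\m a)$ has critical interval $I[\delta,\theta]$ that is not neutral, so $\C C(\nu,\nu;\mu)$ holds for some $\delta\le\mu<\nu\le\theta$. Replacing the critical endpoints by $\mu$ and $\nu$ yields a second pentagon with the same side congruence $\beta$ and the same top, but now with an abelian critical interval; factoring $\m a$ by the bottom congruence of that pentagon, I may assume it is labelled exactly as in Figure~\ref{fig2} with $\C C(\theta,\theta;\delta)$ holding. Since $\mathcal V$ has a Taylor term (it has a difference term), $\C C(\beta,\theta;\delta)$ fails (Theorem~\ref{memoir_pentagon}), and Theorem~\ref{better_pentagons} then produces an algebra $\m b\in\mathcal V$ carrying a pentagon $\{0,\beta,\delta,\theta,\alpha\}$ with $\C C(\theta,\theta;0)$ --- that is, with $\theta$ \emph{globally} abelian (and hence $\delta\le\theta$ abelian as well). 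This detour through Theorem~\ref{better_pentagons} is the one slightly delicate point, since quotienting the original $\m a$ directly by $\delta$ collapses $\beta$ into the top of the pentagon. Now work in $\m b$ with a difference term $d$. One checks that $\theta$ permutes with $\beta$: given $x\mathrel\theta y\mathrel\beta z$, put $w:=d(x,y,z)$; then $x=d(x,y,y)\mathrel\beta d(x,y,z)=w$, because $(x,y)$ lies in the abelian congruence $\theta$ so $d$ acts there like a Maltsev operation, while $z=d(y,y,z)\mathrel\theta d(x,y,z)=w$ by the identity $d(y,y,z)\approx z$. Thus $\theta\circ\beta\subseteq\beta\circ\theta$, and (after the routine verification that $\beta\circ\theta$ is then reflexive, symmetric, transitive, and compatible) this forces $\beta\circ\theta=\beta\vee\theta=\alpha$; the same argument with $\delta$ in place of the abelian congruence $\theta$ gives $\beta\circ\delta=\beta\vee\delta=\alpha$. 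Finally, if $(x,y)\in\theta$ then $(x,y)\in\alpha=\beta\circ\delta$, so $x\mathrel\beta z\mathrel\delta y$ for some $z$; since $\delta\le\theta$ we have $z\mathrel\theta y\mathrel\theta x$, hence $(z,x)\in\beta\cap\theta=0$, so $z=x$ and $(x,y)\in\delta$. Therefore $\theta=\delta$, contradicting the pentagon.

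For $(b)\Rightarrow(a)$ I argue contrapositively, assuming $\mathcal V$ has no difference term. If $\mathcal V$ also has no \emph{weak} difference term, then Theorem~\ref{characterization_of_weak} (the equivalence of its items $(1)$ and $(4)$) provides $\m a\in\mathcal V$, an abelian $\alpha\in\Con(\m a)$, and a pentagon spanning $I[0,\alpha]$; its upper-right congruence lies below $\alpha$, hence is abelian, so that pentagon's critical interval is abelian and a fortiori not neutral --- contradicting $(b)$. In the remaining case $\mathcal V$ has a weak difference term but no difference term, and I would violate the neutral-interval clause of $(b)$ by re-running the construction from the proof of $(5)\Rightarrow(1)$ of Theorem~\ref{characterization_of_weak}: form the free algebra on two generators modulo the square of the congruence they generate, pass to the subalgebra of its square generated by the four ``corner'' pairs, and analyze the principal congruence $\delta$ generated there --- but now the role played in that proof by the absence of a weak difference term is taken over by the weaker hypothesis that no left difference term of $\mathcal V$ is a right Maltsev term (i.e.\ that $t(x,x,y)\approx y$ fails for every left-difference-term witness $t$). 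As before, this should yield a pentagon $\{0,\eta_1,\varepsilon,\eta_2,\eta_1+\eta_2\}$ with $\eta_1+\eta_2$ abelian and $\C C(\eta_2,\eta_1+\eta_2;\varepsilon)$, whose critical interval $I[\varepsilon,\eta_2]$ is then abelian and not neutral, again contradicting $(b)$.

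I expect the genuine obstacle to be this last construction. One must re-prove analogues of Claims~\ref{delta_char}, \ref{delta_neq_eta_2}, \ref{varepsilon_char}, and \ref{cent1}, now carefully distinguishing ``right difference term'' from ``right Maltsev term'': in contrast with Theorem~\ref{characterization_of_weak}, right difference terms are here plentiful (the weak difference term is one), so one must instead isolate precisely which of them fail to be right Maltsev terms and thread that information through the matrix computations that certify $\C C(\eta_2,\eta_1+\eta_2;\varepsilon)$. A secondary point to nail down explicitly is the fact used in the forward direction that a difference term already forces the solvability relation to be a congruence preserved by homomorphisms.
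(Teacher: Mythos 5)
The paper does not prove this statement: it is a verbatim citation of Theorem~3.3(a)$\Leftrightarrow$(b) of reference \cite{diff}, and the surrounding material (Corollary~\ref{diff_char2}, Lemma~\ref{commutative_plus_Taylor_lm}) is \emph{derived from} it rather than used to prove it. So there is no in-paper proof to compare against, and you are in effect attempting to reprove an imported result using the machinery of the present paper.

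That said, the merits of the attempt are worth assessing. Your $(a)\Rightarrow(b)$ argument for the neutrality clause is correct and genuinely nice: after shrinking to an abelian critical interval, invoking Theorem~\ref{memoir_pentagon} and Theorem~\ref{better_pentagons} to obtain a pentagon whose top-right congruence $\theta$ is globally abelian, and then showing $\beta\circ\theta=\alpha=\beta\circ\delta$ via the difference term, you correctly force $\theta=\delta$, a contradiction. The one non-obvious step you flagged (why one cannot simply factor by $\delta$) is real and your detour through Theorem~\ref{better_pentagons} handles it. The solvability clause of $(b)$ is waved at; the paper itself sketches that same point in the proof of Corollary~\ref{diff_char2}, so this is recoverable but is not actually proved by you.

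The genuine gap is Case~2 of $(b)\Rightarrow(a)$ (variety has a weak difference term but no difference term). You propose to re-run the construction from $(5)\Rightarrow(1)$ of Theorem~\ref{characterization_of_weak}, replacing the hypothesis ``no weak difference term'' by ``no left difference term is a right Maltsev term,'' and you correctly identify Claim~\ref{delta_neq_eta_2} as the place where the hypothesis bites. But the substitution does not go through: Claim~\ref{delta_char} manufactures a \emph{right difference term} $t$ (Subclaim~\ref{deltacharsubclaim1} only gets $t(a,a,b)=b$ when $(a,b)$ lies in an abelian congruence, via freeness of $\overline\theta$), and in Claim~\ref{delta_neq_eta_2} the freeness argument upgrades this to a \emph{left} difference term, so the contradiction obtained is ``a weak difference term exists.'' In Case~2 that is no contradiction at all --- a weak difference term is assumed to exist. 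To violate your weakened hypothesis you would need the constructed $t$ to satisfy $t(x,x,y)\approx y$ \emph{identically}, i.e.\ be a right Maltsev term, and nothing in the free-abelian-congruence apparatus gives that identity outside abelian classes. So the rethreading you sketch does not close the gap; some genuinely different idea (presumably the one in \cite{diff}) is needed, and the paper deliberately sidesteps this by citing rather than reproving the theorem.
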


 (A congruence interval $I[\delta,\theta]$ is called \emph{neutral}
 if it contains no nontrivial abelian subinterval $I[x,y]$,
 equivalently if $[y,y]_x=y$ whenever $\delta\leq x<y\leq \theta$.)
    
\begin{cor}
\label{diff_char2}
A variety has a difference term if and only if it has a weak
difference term and whenever $\m N_5$ is a sublattice of
 $\Con(\m a)$, then the critical interval is neutral.
\end{cor}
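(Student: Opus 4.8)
The plan is to obtain Corollary~\ref{diff_char2} directly from Theorem~\ref{diff_char} by matching the two lists of conditions clause by clause. Condition (b) of Theorem~\ref{diff_char} is the conjunction of two requirements on $\mathcal V$, which I will call (b1) ``for every $\m a\in\mathcal V$ the solvability relation is a congruence of the lattice $\Con(\m a)$ that is preserved by homomorphisms'' and (b2) ``every critical interval of a pentagon sublattice occurring in some $\Con(\m a)$, $\m a\in\mathcal V$, is neutral''. The right-hand side of the corollary is ``$\mathcal V$ has a weak difference term, together with (b2)''. Since (b2) occurs on both sides, it suffices to show that, in the presence of (b2), the condition (b1) and the condition ``$\mathcal V$ has a weak difference term'' are interchangeable.

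For the forward direction of the corollary: if $\mathcal V$ has a difference term $t(x,y,z)$, then by Definition~\ref{left_right}~(7) $t$ is a right Maltsev term and a left difference term; a right Maltsev term satisfies $t(x,x,y)\approx y$ identically, hence is in particular a right difference term (Definition~\ref{left_right}~(4)), so $t$ is both a right and a left difference term, i.e.\ a weak difference term. Moreover, $(a)\Rightarrow(b)$ of Theorem~\ref{diff_char} yields (b2). This is precisely the right-hand side of the corollary.

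For the converse, assume $\mathcal V$ has a weak difference term and that (b2) holds. To apply Theorem~\ref{diff_char} and conclude that $\mathcal V$ has a difference term, I only need to supply (b1). Here I would invoke the known fact that every variety with a weak difference term satisfies (b1), that is, in each algebra of such a variety the solvability relation is a lattice congruence on the congruence lattice and is respected by homomorphic images (this is standard in the theory of weak difference terms; see, e.g., \cite{shape}). Granting this, (b1) and (b2) together are exactly condition (b), so the implication $(b)\Rightarrow(a)$ of Theorem~\ref{diff_char} produces a difference term, finishing the proof.

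The only substantive ingredient is the cited implication ``weak difference term $\Rightarrow$ (b1)''; everything else is bookkeeping with Definition~\ref{left_right} and Theorem~\ref{diff_char}, and I expect that implication to be the one point that must be quoted (or, if a self-contained account is wanted, proved) carefully. A direct proof of it would fix a weak difference term $d$ and verify, for each $\m a\in\mathcal V$, that the solvability relation on $\Con(\m a)$ is compatible with joins and meets and is carried by surjections onto the solvability relation of the image; one can approach the lattice-compatibility through Theorem~\ref{characterization_of_weak}~(2) (abelian intervals consist of permuting equivalence relations, hence are built up from well-behaved modular pieces), and the join/meet and quotient manipulations are of the same $S,T$-matrix flavour as in the proof of Theorem~\ref{stable}, combined with the behaviour of the commutator under quotients recorded in Theorem~\ref{basic_centrality}~(10). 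For the purposes of this paper, however, quoting the known result should suffice.
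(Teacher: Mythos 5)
Your proof matches the paper's approach exactly: both directions reduce Corollary~\ref{diff_char2} to Theorem~\ref{diff_char} clause by clause, and the converse is carried by the fact that a weak difference term already yields condition (b1) (solvability is a congruence on $\Con(\m a)$ preserved by homomorphisms), quoted from \cite{shape}. One small matter of precision: the relevant results in \cite{shape} (Lemma~6.10, Theorem~6.11, Theorem~6.19(1)) are stated for the $\infty$-solvability relation rather than ordinary solvability, so the paper is careful to add that the same proofs adapt verbatim to the ordinary (finite-chain) solvability relation; your citation should flag that adaptation rather than treating the result as an off-the-shelf quotation.
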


\begin{proof}
  This result will be derived from Theorem~\ref{diff_char}.
  Assume first that $\mathcal V$ has a difference term.
  This term is also a weak difference
  term for $\mathcal V$. Also,
whenever $\m N_5$ is a sublattice of
$\Con(\m a)$, then the critical interval must be neutral
by Theorem~\ref{diff_char} (a)$\Rightarrow$(b).

Conversely, assume that $\mathcal V$ has a weak difference
term and whenever $\m N_5$ is a sublattice of
$\Con(\m a)$, then the critical interval is neutral.
Our goal is to prove that $\mathcal V$
has a difference term.
According to Theorem~\ref{diff_char}, what remains
to show is that for each $\m a\in {\mathcal V}$ the solvability relation
is a congruence on $\Con(\m a)$ which is preserved by homomorphisms.
This property always holds for
varieties with a weak difference term,
as we now explain.

Following the notation of Chapter 6 of \cite{shape},
write $\alpha\lhd\beta$ to mean that $\alpha\leq\beta$
and that $\C C(\beta,\beta;\alpha)$ holds
($\beta$ is abelian over $\alpha$).
The solvability relation is defined
so that $\gamma{}\solv{} \delta$ holds exactly when
there is a finite chain
\[
\gamma\cap\delta =
\varepsilon_0\lhd \cdots
\lhd
\varepsilon_n = \gamma+\delta.
\]
A related notion, $\infty$-solvability, is defined in \cite[Definition~6.5]{shape}
the same way, but with finite chains replaced by
continuous well-ordered chains.
That is, $\alpha\lhdlhd\beta$ means
$\alpha\leq \beta$ and there is continuous well-ordered chain
$(\varepsilon_{\mu})_{\mu<\kappa+1}$
such that $\alpha=\varepsilon_0$,
$\varepsilon_{\mu}\lhd \varepsilon_{\mu+1}$
for $\mu<\kappa+1$ and $\varepsilon_{\lambda} =
\bigcup_{\mu<\lambda} \varepsilon_{\mu}$ for limit $\lambda\leq\kappa$,
and $\bigcup_{\mu<\kappa} \varepsilon_{\mu}=\beta$.
Then $\gamma$ is $\infty$-solvably related to $\delta$
if $\gamma\cap\delta \lhdlhd \gamma+\delta$.
The claims that we need to prove here for solvability
were proved for $\infty$-solvability in \cite{shape},
and the proofs given there work for our purposes.
In particular, Lemma~6.10 of \cite{shape} proves
that, for any $\gamma$,
\begin{itemize}
\item if $\alpha\lhd \beta$, then
$\alpha\cap\gamma\lhd \beta\cap \gamma$ and
$\alpha+\gamma\lhd \beta+\gamma$.
\end{itemize}
This is the technical lemma which is used
in Theorem~6.11 of \cite{shape} to prove that the
$\infty$-solvability relation is compatible with finite meets
and arbitrary joins.
The same arguments 
show that the ordinary solvability relation is compatible with finite meets
and finite joins.

The fact that the $\infty$-solvability relation is preserved by homomorphisms
is proved in Theorem~6.19~(1) of \cite{shape}.
The same proof works here for the ordinary solvability relation.
This completes the proof (sketch) for the converse.
\end{proof}

The next lemma refines
the statement of Theorem~\ref{memoir_pentagon_2}
for varieties that do \emph{not} have a difference term.
This lemma will be used in the proofs of
Theorem~\ref{commutative_plus_weak},
Theorem~\ref{main1.5},
and
Theorem~\ref{main4}.

\begin{lm} \label{commutative_plus_Taylor_lm}
  If $\mathcal V$ has a Taylor term and
  does not have a difference term, then $\mathcal V$
  contains an algebra $\m a$ with congruences
  labeled as in Figure~\ref{fig11} satisfying the following
  commutator conditions:
\begin{enumerate}
\item[(1)] $[\alpha,\theta]=0$,
\item[(2)] $\C C(\theta,\alpha;\delta)$, and
\item[(3)] $[\alpha,x]_{\delta}=x$ for all $x\in I[\delta,\theta]$.
\end{enumerate}  
\begin{figure}[ht]
\begin{center}
\setlength{\unitlength}{1mm}
\begin{picture}(20,33)
\put(0,15){\circle*{1.2}}
\put(10,0){\circle*{1.2}}
\put(10,30){\circle*{1.2}}
\put(20,10){\circle*{1.2}}
\put(20,15){\circle*{1.2}}
\put(20,20){\circle*{1.2}}

\put(10,0){\line(-2,3){10}}
\put(10,0){\line(1,1){10}}
\put(10,30){\line(-2,-3){10}}
\put(10,30){\line(1,-1){10}}
\put(20,20){\line(0,-1){10}}

\put(-4.5,13){$\beta$}
\put(22,9){$\delta$}
\put(22,14){$x$}
\put(22,19){$\theta$}
\put(9,32){$\alpha$}
\put(9,-5){$0$}
\end{picture}
\bigskip

\caption{\sc $\mathcal V$ has a Taylor term but not a difference term.}\label{fig11}
\end{center}
\end{figure}
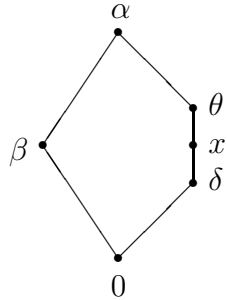
\end{lm}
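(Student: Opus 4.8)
The plan is to branch on Corollary~\ref{diff_char2}. Since $\mathcal V$ has a Taylor term but no difference term, either (a) $\mathcal V$ has no weak difference term, or (b) $\mathcal V$ has a weak difference term but, for some $\m a\in\mathcal V$, $\Con(\m a)$ contains a copy of $\m N_5$ whose critical interval is not neutral. In either case item~(3) of the lemma will come for free from Theorem~\ref{memoir_pentagon_2} once a pentagon is in hand, because $\mathcal V$ has a Taylor term. So the real content is to produce, in some member of $\mathcal V$, a pentagon that in addition satisfies (1) $[\alpha,\theta]=0$ and (2) $\C C(\theta,\alpha;\delta)$.

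In case~(a) I would invoke the failure of condition~(5) of Theorem~\ref{characterization_of_weak}: there are $\m a\in\mathcal V$ and $\alpha\in\Con(\m a)$ carrying a pentagon labeled as in Figure~\ref{fig8} with $[\alpha,\alpha]=0$ and $\C C(\theta,\alpha;\delta)$. Then $[\alpha,\theta]\leq[\alpha,\alpha]=0$ is~(1), the hypothesis $\C C(\theta,\alpha;\delta)$ is literally~(2), and Theorem~\ref{memoir_pentagon_2} is~(3); so $\m a$ is already the desired algebra. Note that in this branch Theorem~\ref{stable} is never invoked, which is essential, since a weak difference term need not be available here.

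Case~(b) is where the ``Better pentagons'' machinery (Theorem~\ref{better_pentagons}) enters. Let $I[\delta_0,\theta_0]$ be the non-neutral critical interval of the $\m N_5\leq\Con(\m a)$ supplied by Corollary~\ref{diff_char2}, with side element $\beta_0$. Fix a nontrivial abelian subinterval $I[x_0,y_0]$ of it, i.e.\ $\delta_0\leq x_0<y_0\leq\theta_0$ with $\C C(y_0,y_0;x_0)$. One checks that $\beta_0$ together with $x_0<y_0$ spans a pentagon in $\Con(\m a)$ (it shares its bottom and top with the original $\m N_5$), now with \emph{abelian} critical interval $I[x_0,y_0]$. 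Because $\mathcal V$ has a Taylor term, $\C C(\beta_0,y_0;x_0)$ fails by Theorem~\ref{memoir_pentagon}, so Theorem~\ref{better_pentagons} applies to this pentagon and produces $\m b\in\mathcal V$ with a pentagon labeled as in Figure~\ref{fig2} for which $\C C(\alpha,\alpha;\beta)$ holds. Then $[\alpha,\theta]\leq[\alpha,\alpha]\leq\beta$ and $[\alpha,\theta]\leq\alpha\cap\theta=\theta$, so $[\alpha,\theta]\leq\beta\cap\theta=0$; this is~(1). Since $\mathcal V$ has a weak difference term and $\m b$ has $\alpha\geq\theta\geq\delta$ with $[\alpha,\theta]=0$, Theorem~\ref{stable} yields $\C C(\theta,\alpha;\delta)$, which is~(2); and Theorem~\ref{memoir_pentagon_2} gives~(3).

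The main obstacle is case~(b), specifically turning ``$\Con(\m a)$ has an $\m N_5$ with a non-neutral critical interval'' into a pentagon of the strong form required. Three observations carry it: first, one may shrink the critical interval down to an abelian one without disturbing the surrounding pentagon, so that Theorem~\ref{better_pentagons} can fire; second, the conclusion of Theorem~\ref{better_pentagons} --- phrased only as ``$\alpha$ abelian over $\beta$'' --- already forces the stronger $[\alpha,\theta]=0$ simply because $\beta\cap\theta=0$ in any pentagon of that shape; and third, Theorem~\ref{stable} then upgrades this to the \emph{asymmetric} centrality $\C C(\theta,\alpha;\delta)$ demanded by~(2), which is precisely where the weak difference term hypothesis is consumed.
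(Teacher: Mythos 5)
Your proof is correct and follows essentially the same two-case structure as the paper's: split on whether $\mathcal V$ has a weak difference term, invoke Theorem~\ref{characterization_of_weak}~(5) in the first case, and in the second case shrink to an abelian critical interval, fire Theorem~\ref{better_pentagons}, and finish with Theorems~\ref{stable} and \ref{memoir_pentagon_2}. The one cosmetic divergence is in deriving item~(1) from Theorem~\ref{better_pentagons}: you use the conclusion $\C C(\alpha,\alpha;\beta)$ together with $\beta\cap\theta=0$, while the paper uses the other conclusion $\C C(\theta,\theta;0)$ together with Theorem~\ref{basic_centrality}~(5) and (8); both routes are valid and equally short.
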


\begin{proof}
  We split the proof into two cases depending on whether
  $\mathcal V$ has a weak difference term.

  For the first case, assume that $\mathcal V$
  \emph{does not} have a weak difference term.
  Theorem~\ref{characterization_of_weak}~(5) guarantees
  that some $\m a\in \mathcal V$ has a pentagon
  in its congruence lattice, labeled as in Figure~\ref{fig11},
  where (i) $[\alpha,\alpha]=0$ and (ii) $\C C(\theta,\alpha;\delta)$.
  Since (i) is stronger than Item (1) of this lemma statement
  (by monotonicity of the commutator), and (ii) is the same as (2),
  we only have to verify Item (3) of the lemma statement.
That follows from   
Theorem~\ref{memoir_pentagon_2}, since $\mathcal V$ has a Taylor term.

For the second case, assume that $\mathcal V$ \emph{does}
have a weak difference term.
We still assume that $\mathcal V$
\emph{does not}
have a difference term.
By Corollary~\ref{diff_char2},
the fact that $\mathcal V$ does not
have  a difference term
means that some $\m a\in {\mathcal V}$ has a pentagon
in its congruence lattice, labeled as in Figure~\ref{fig11},
where the critical interval $I[\delta,\theta]$ 
is not neutral.
  The nonneutrality means that $[x,x]_{\delta}<x$
  for some $x\in I[\delta,\theta]$. 
  We have
  \[
\delta \leq [x,x]_{\delta}<x \leq \theta,
\]
so we can alter the pentagon by shrinking
$I[\delta,\theta]$ 
to
$I[[x,x]_{\delta},x]$.
This produces a new pentagon with abelian critical
interval. Change to this pentagon and change notation.
That is, assume that $\{\beta,\delta,\theta\}$
generates a pentagon in $\Con(\m a)$,
labeled as in Figure~\ref{fig4}, 
with critical 
interval $I[\delta,\theta]$ where $\C C(\theta,\theta;\delta)$ holds.

By Theorem~\ref{memoir_pentagon}, $\C C(\beta,\theta;\delta)$ fails,
since $\mathcal V$ has a Taylor term.
Now, citing Theorem~\ref{better_pentagons} (Better pentagons),
we may further adjust our pentagon so that
$\C C(\theta,\theta;0)$ holds.
We have $\C C(\beta,\theta;0)$
by
Theorem~\ref{basic_centrality}~(8),
so for $\alpha = \beta+\theta$
we have $\C C(\alpha,\theta;0)$ by
Theorem~\ref{basic_centrality}~(5).
This may be written as $[\alpha,\theta]=0$,
which is Item (1) of the lemma statement.
Since $\alpha\geq \theta\geq \delta$, 
we may invoke Theorem~\ref{stable} to derive
that $\C C(\theta,\alpha;\delta)$ holds.
This is Item~(2) of the lemma statement.
We derive Item (3) from Theorem~\ref{memoir_pentagon_2},
as we did in the first case of this proof.
\end{proof}

\begin{thm} \label{commutative_plus_weak}
If $\mathcal V$ has a weak difference term and 
the commutator is commutative on pairs
of comparable congruences, then
$\mathcal V$ has a difference term.
\end{thm}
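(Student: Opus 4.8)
The plan is to argue by contradiction, combining Lemma~\ref{commutative_plus_Taylor_lm} with the pentagon obstruction of Theorem~\ref{memoir_pentagon}. I would assume that $\mathcal V$ has a weak difference term, that the commutator is commutative on comparable pairs of congruences throughout $\mathcal V$, and --- for contradiction --- that $\mathcal V$ has no difference term. A weak difference term is in particular a Taylor term, so $\mathcal V$ has a Taylor term and Lemma~\ref{commutative_plus_Taylor_lm} applies: it produces an algebra $\m a\in\mathcal V$ whose congruence lattice contains a pentagon $\{0,\beta,\delta,\theta,\alpha\}$ labeled as in Figure~\ref{fig11} (with $\delta\leq\theta$ the critical interval and $\alpha=\beta+\theta$) for which $[\alpha,\theta]=0$ and $\C C(\theta,\alpha;\delta)$ hold. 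In fact only the centrality $\C C(\theta,\alpha;\delta)$ --- condition (2) of that lemma --- is needed below; conditions (1) and (3) will not be used.

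Next I would pass to the quotient $\m b=\m a/\delta\in\mathcal V$. By Theorem~\ref{basic_centrality}~(10), the relation $\C C(\theta,\alpha;\delta)$ in $\m a$ is equivalent to $\C C(\theta/\delta,\alpha/\delta;0)$ in $\m b$, that is, to $[\theta/\delta,\alpha/\delta]=0$. Now $\theta/\delta\leq\alpha/\delta$ is a comparable pair of congruences of $\m b$, so the hypothesis on $\mathcal V$ gives $[\alpha/\delta,\theta/\delta]=[\theta/\delta,\alpha/\delta]=0$, i.e.\ $\C C(\alpha/\delta,\theta/\delta;0)$ holds in $\m b$. Applying Theorem~\ref{basic_centrality}~(10) in the reverse direction returns $\C C(\alpha,\theta;\delta)$ in $\m a$, and then monotonicity in the first variable (Theorem~\ref{basic_centrality}~(1)) together with $\beta\leq\alpha$ yields $\C C(\beta,\theta;\delta)$ in $\m a$. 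But $\{0,\beta,\delta,\theta,\alpha\}$ is a pentagon labeled as in Figure~\ref{fig4}, so $\C C(\beta,\theta;\delta)$ contradicts Theorem~\ref{memoir_pentagon}, which applies because $\mathcal V$ has a Taylor term. Hence $\mathcal V$ must have a difference term.

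Most of the work is already carried out in the earlier results: Lemma~\ref{commutative_plus_Taylor_lm} itself rests on Theorem~\ref{characterization_of_weak}, the ``Better pentagons'' construction of Theorem~\ref{better_pentagons}, and the stability result Theorem~\ref{stable}, while the pentagon obstruction comes from Theorem~\ref{memoir_pentagon}. The only genuinely new step --- and the place requiring care --- is the passage through $\m a/\delta$: since commutativity of the commutator is assumed only for \emph{comparable} pairs, one must make sure that the pair we feed it, $\theta/\delta\leq\alpha/\delta$, really is comparable, and verify that Theorem~\ref{basic_centrality}~(10) transports the centrality statement down to the quotient and back up again without loss. Once that bookkeeping is in place, the contradiction with Theorem~\ref{memoir_pentagon} is immediate.
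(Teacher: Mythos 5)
Your proof is correct and follows essentially the same route as the paper's: invoke Lemma~\ref{commutative_plus_Taylor_lm}, pass to $\m a/\delta$, and derive a violation of commutativity on the comparable pair $\theta/\delta\leq\alpha/\delta$. The only cosmetic difference is that the paper reads the contradiction directly off Item~(3) of the lemma (namely $[\alpha/\delta,\theta/\delta]=\theta/\delta>0$) while you re-derive it by transporting $\C C(\alpha,\theta;\delta)$ back up, applying monotonicity, and then citing Theorem~\ref{memoir_pentagon} --- which is precisely the ingredient from which Item~(3) was obtained in the first place.
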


\begin{proof}
Assume that $\mathcal V$ has a weak difference term and 
does not have a difference term.
The hypotheses of Lemma~\ref{commutative_plus_Taylor_lm}
hold, so some $\m a\in {\mathcal V}$ has
a pentagon in its congruence lattice with
congruences labeled as in Figure~\ref{fig11},
and which satisfies the commutator conditions (1), (2), and (3)
of Lemma~\ref{commutative_plus_Taylor_lm}.
In $\m a/\delta$ the congruences
$x=\alpha/\delta$ and $y=\theta/\delta$
satisfy $0<y <x$,
$[y,x]=0$ (by Item (2) of that lemma)
and $[x,y]=y$ (by Item (3) of that lemma).
\end{proof}

The next theorem is one of the primary results
of this article.

\begin{thm} \label{main1}
  The following are equivalent for a variety $\mathcal V$.
\begin{enumerate}  
\item[(1)]
  $\mathcal V$ has a difference term
\item[(2)] $\mathcal V$ has a Taylor term and has
  commutative commutator.
\item[(3)]  $\mathcal V$ has a Taylor term and
  the commutator operation is commutative
  on   pairs of comparable congruences.
\end{enumerate}  
\end{thm}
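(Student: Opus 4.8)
The plan is to establish the cycle $(1)\Rightarrow(2)\Rightarrow(3)\Rightarrow(1)$. Only the link $(3)\Rightarrow(1)$ carries genuine content, and that content has already been packaged into Theorems~\ref{commutative_implies_weak} and~\ref{commutative_plus_weak}; the rest is bookkeeping.

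For $(1)\Rightarrow(2)$: if $t(x,y,z)$ is a difference term for $\mathcal V$ then $t$ is a right Maltsev term, so $\mathcal V\models t(x,x,y)\approx y$; in particular $t(a,a,b)=b$ whenever $(a,b)$ lies in an abelian congruence, so $t$ is a right difference term, and it is a left difference term by definition. Hence $t$ is a weak difference term, and so $\mathcal V$ has a Taylor term. For commutativity of the commutator throughout $\mathcal V$ I would invoke the known fact that the term condition commutator is commutative in every variety with a difference term (see, e.g., \cite{kearnes-szendrei}); one could instead attempt to re-derive this — passing first, via Lemma~\ref{asymmetry}, to a quotient in which $[\beta,\alpha]=0<[\alpha,\beta]$ and then exploiting the difference term on an $\alpha,\beta$-matrix witnessing $[\alpha,\beta]>0$ to contradict $\C C(\beta,\alpha;0)$ — but citing the fact is cleaner and avoids a digression.

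The implication $(2)\Rightarrow(3)$ is immediate: if $[x,y]=[y,x]$ for every pair of congruences, then it holds for comparable pairs.

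The implication $(3)\Rightarrow(1)$ is a two-step assembly. The hypothesis of $(3)$ — a Taylor term together with $[x,y]=[y,x]$ whenever $x\leq y$ in $\Con(\m a)$ for $\m a\in\mathcal V$ — is exactly the hypothesis of Theorem~\ref{commutative_implies_weak}, which yields that $\mathcal V$ has a weak difference term. Then $\mathcal V$ has both a weak difference term and commutativity of the commutator on comparable pairs, which is precisely the hypothesis of Theorem~\ref{commutative_plus_weak}; that theorem delivers a difference term for $\mathcal V$, closing the cycle. The only genuine obstacle is therefore the external input used in $(1)\Rightarrow(2)$, namely that a difference term forces the commutator to be commutative; the implication $(3)\Rightarrow(1)$, which looks like the hard one, requires nothing beyond chaining the two theorems already established.
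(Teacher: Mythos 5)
Your proposal is correct and follows essentially the same route as the paper: for $(1)\Rightarrow(2)$ the paper likewise appeals to the fact that having a difference term is a nontrivial idempotent Maltsev condition (hence implies a Taylor term, the weakest such) and cites an external result (Lemma~2.2 of \cite{diff}) for commutativity, while you route through ``weak difference term'' as an intermediate step and cite \cite{kearnes-szendrei} -- a cosmetic difference; and for $(3)\Rightarrow(1)$ the paper does exactly what you do, chaining Theorems~\ref{commutative_implies_weak} and \ref{commutative_plus_weak}.
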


\begin{proof}
The class of varieties that have a difference term
is definable by a nontrivial idempotent Maltsev condition.
(The reason that the class
of varieties with a difference term
is definable by an idempotent Maltsev condition is explained in
the paragraph following
the proof of Theorem~4.8 of \cite{kearnes-szendrei}.
A specific Maltsev
condition defining the class of varieties with a difference term
is in \cite[Section~4]{kissterm}.)
The weakest nontrivial idempotent Maltsev condition
  is the one that asserts the existence of a Taylor term.
  Thus, the implication (1)$\Rightarrow$(2)
  follows from Lemma~2.2 of \cite{diff}, which proves
  that a variety with a difference term has commutative commutator.
  Item~(2) is formally stronger than Item~(3),
  so it remains to prove that
  Item~(3) implies Item~(1).
For this, 
combine Theorems~\ref{commutative_implies_weak}  
and \ref{commutative_plus_weak}.
\end{proof}
\bigskip

Now we turn to an examination of distributivity
of the commutator.
You
will recall that we proved
some results about the left or right distributivity
of the commutator in Theorem~\ref{distributive_thm}.
The results obtained there were left/right-asymmetric, but that
asymmetry disappears when a Taylor term
is present, as we establish with the next two results.

\begin{thm} \label{refinement}
If $\mathcal V$ has a Taylor term, then for any 
$\m A\in \mathcal V$ and any congruences $\alpha, \beta\in \Con(\m a)$
the following are equivalent:
\begin{enumerate}
\item[(a)]
  $[\beta,\alpha]=0$ and $[\alpha,\alpha\cap\beta]=0$.
\item[(b)] $\Delta_{\alpha,\beta}$ is disjoint from the
  coordinate projection kernels of $\m a(\alpha)$.
\item[(c)]  $[\alpha,\beta] = 0$ and $[\beta,\alpha\cap\beta]=0$.
\item[(d)]
$\Delta_{\beta,\alpha}$ is disjoint from the
  coordinate projection kernels of $\m a(\beta)$.
\end{enumerate}

In particular, if $\mathcal V$ has a Taylor term, then
$\mathcal V$ satisfies the commutator congruence
quasi-identity
\[
[\beta,\alpha]=0 \;\;\wrel{\&}\;\; [\alpha,\alpha\cap\beta]=0
\;\wrel{\Longrightarrow}\;
[\alpha,\beta]=0.
\]
\end{thm}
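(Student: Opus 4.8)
The plan is to establish the cycle of implications $(a)\Rightarrow(b)\Rightarrow(c)\Rightarrow(d)\Rightarrow(a)$, using the Taylor assumption only once — at the single step that introduces left/right asymmetry, namely $(b)\Rightarrow(c)$ (and dually, by symmetry of the statement under swapping $\alpha\leftrightarrow\beta$, the implication $(d)\Rightarrow(a)$ is the same argument with the roles of $\alpha$ and $\beta$ interchanged). The final quasi-identity then follows immediately: if $[\beta,\alpha]=0$ and $[\alpha,\alpha\cap\beta]=0$, then $(a)$ holds, hence $(c)$ holds, and the first conjunct of $(c)$ is $[\alpha,\beta]=0$.

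For $(a)\Rightarrow(b)$: assume $[\beta,\alpha]=0$ and $[\alpha,\alpha\cap\beta]=0$. Recall from \eqref{delta_gen} that $\Delta_{\alpha,\beta}\leq\beta_1\times\beta_2$, so $\Delta_{\alpha,\beta}\cap\eta_1\leq\beta_1\cap\eta_1\cap\eta_2$ and similarly for $\eta_2$; one checks $\beta_1\cap\eta_1\cap\eta_2=0$ in $\m a(\alpha)$ (a pair related by both coordinate kernels is trivial). Thus it suffices to show $\Delta_{\alpha,\beta}\cap\eta_1=0$ and $\Delta_{\alpha,\beta}\cap\eta_2=0$. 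Here I would invoke the standard term-condition description of $\Delta_{\alpha,\beta}$ (Freese--McKenzie, and compare the bookkeeping already used in the proof of Theorem~\ref{distributive_thm}): $[\eta_1,\Delta_{\alpha,\beta}]$ and $[\eta_2,\Delta_{\alpha,\beta}]$ both vanish because $[\beta,\alpha]=0$ forces the diagonal to be a union of $\Delta_{\alpha,\beta}$-classes while no two diagonal elements are $\eta_i$-related, exactly as in Claim~\ref{left_add1}; combined with $[\alpha,\alpha\cap\beta]=0$ this pushes $\Delta_{\alpha,\beta}\cap\eta_i$ down to $0$. (The congruence $\alpha\cap\beta$ enters because $\Delta_{\alpha,\beta}\cap\eta_1$ projects, via the second coordinate, into $(\alpha\cap\beta)$ and its centralizing behavior with $\alpha$ controls whether it can be nontrivial.) For $(b)\Rightarrow(c)$: assuming $\Delta_{\alpha,\beta}\cap\eta_1=\Delta_{\alpha,\beta}\cap\eta_2=0$, I want $[\alpha,\beta]=0$ and $[\beta,\alpha\cap\beta]=0$. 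The first is the usual equivalence ``$[\alpha,\beta]=0\iff\Delta_{\alpha,\beta}\cap\eta_2=0$'' (Freese--McKenzie, Prop.~4.2-ish), which needs nothing. The second, $[\beta,\alpha\cap\beta]=0$, is where asymmetry would obstruct a purely general argument, and it is the step I expect to be the main obstacle: I would use the Taylor hypothesis through Theorem~\ref{memoir_pentagon} (equivalently Theorem~\ref{memoir_pentagon_2}) — if $[\beta,\alpha\cap\beta]\neq 0$ while $\Delta_{\alpha,\beta}$ meets neither kernel, one can manufacture, inside $\Con(\m a(\alpha))$, a pentagon with abelian critical interval of the forbidden shape $\C C(\beta',\theta';\delta')$, contradicting the absence of Taylor-forbidden pentagons. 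Concretely I would take $\theta'$ to be (a piece of) $\Delta_{\alpha,\beta}$, $\delta'$ its intersection with a suitable coordinate kernel, and $\beta'$ a coordinate kernel, and verify the pentagon relations using $(b)$ together with basic properties from Theorem~\ref{basic_centrality}, especially items (8)--(10) for transferring centrality through $\m a(\alpha)$ and its quotients.

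Finally $(c)\Rightarrow(d)$ is literally $(a)\Rightarrow(b)$ with $\alpha$ and $\beta$ exchanged (noting $\alpha\cap\beta=\beta\cap\alpha$), and $(d)\Rightarrow(a)$ is $(b)\Rightarrow(c)$ with the roles exchanged; so the whole cycle closes once the two ``generic'' implications and the one ``Taylor'' implication are in hand. The hardest part is genuinely the production of the forbidden pentagon in $(b)\Rightarrow(c)$: one must be careful that the five congruences one writes down in $\Con(\m a(\alpha))$ are pairwise distinct in the pentagon pattern — this is where the disjointness hypotheses in $(b)$ are used quantitatively (to separate the bottom of the pentagon from the critical interval), mirroring the comparability bookkeeping carried out at length in the proof of Theorem~\ref{better_pentagons}.
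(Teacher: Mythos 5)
Your overall cycle $(a)\Rightarrow(b)\Rightarrow(c)\Rightarrow(d)\Rightarrow(a)$, with the observation that $(c)\Rightarrow(d)\Rightarrow(a)$ is just $(a)\Rightarrow(b)\Rightarrow(c)$ with the roles of $\alpha$ and $\beta$ interchanged, is exactly the structure of the paper's proof. But you have placed the Taylor-term hypothesis at the wrong implication, and that mistake propagates: the paper uses the Taylor term in $(a)\Rightarrow(b)$, while $(b)\Rightarrow(c)$ is a short, entirely Taylor-free computation by contraposition. You have it reversed.

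Concretely, your sketch of $(a)\Rightarrow(b)$ never invokes the Taylor term and contains an incorrect step: from $\Delta_{\alpha,\beta}\leq\beta_1\times\beta_2=\beta_1\cap\beta_2$ you conclude $\Delta_{\alpha,\beta}\cap\eta_1\leq\beta_1\cap\eta_1\cap\eta_2$, but nothing gives you the factor $\eta_2$ --- obtaining $\Delta_{\alpha,\beta}\cap\eta_1\leq\eta_2$ is precisely what is to be proved. You then say this ``reduces'' to showing $\Delta_{\alpha,\beta}\cap\eta_i=0$, which is just restating $(b)$. Your appeal to Claim~\ref{left_add1} also overreaches: that claim yields that diagonal elements form singleton $[\eta_i,\Delta_{\alpha,\beta}]$-classes, not that $[\eta_i,\Delta_{\alpha,\beta}]=0$, and neither conclusion gets you to $\eta_i\cap\Delta_{\alpha,\beta}=0$ without more. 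The actual argument in the paper takes an arbitrary pair $((a,c),(b,c))\in\eta_2\cap\Delta_{\alpha,\beta}$, plugs it into an $i$-th place Taylor identity of $T$ to produce an equality $T(a,\wec{u})=T(b,\wec{u})$ with parameters in $\{b,c\}$, uses $(a,b)\in\alpha\cap\beta$ and $[\alpha,\alpha\cap\beta]=0$ to free the parameters to the whole $\alpha$-class, iterates through all $n$ places of $T$, and then uses idempotence to conclude $a=b$. That chain of ideas --- the Taylor identities, the parameter-freeing via $[\alpha,\alpha\cap\beta]=0$, and the iteration across coordinates --- is the heart of the theorem, and it is absent from your sketch.

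Conversely, your $(b)\Rightarrow(c)$ step, where you expect to need Theorem~\ref{memoir_pentagon} and a delicate pentagon construction, is in fact elementary: if $[\alpha,\beta]>0$ one exhibits a pair in $(\eta_2\cap\Delta_{\alpha,\beta})\setminus 0$ directly from a witnessing $\alpha,\beta$-matrix, and if $[\beta,\alpha\cap\beta]>0$ one exhibits a pair in $(\eta_1\cap\Delta_{\alpha,\beta})\setminus 0$ from a witnessing $\beta,(\alpha\cap\beta)$-matrix. No Taylor term, no pentagon. So while your high-level decomposition is right, the technical content is misallocated, and the one implication that genuinely carries the Taylor hypothesis is left unproved.
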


\begin{proof}
This proof is a refinement of the proof of
Lemma~4.4 of \cite{kearnes-szendrei}.

To prove that (a) implies (b), it suffices 
to prove that (a) implies that $\eta_2\cap\Delta_{\alpha,\beta}=0$.
For then, by interchanging the two coordinates of $\m a(\alpha)$,
the same argument will show that $\eta_1\cap\Delta_{\beta,\beta}=0$ also.
Let $\theta = \eta_2\cap\Delta_{\alpha,\beta}\in\Con(\m a(\alpha))$.
Assuming (a), that $[\beta,\alpha]=0$ holds,
the diagonal of $\m a(\alpha)$
is a union of $\Delta_{\alpha,\beta}$-classes. No
two distinct diagonal elements are related by $\eta_2$,
and $\theta = \eta_2\cap \Delta_{\alpha,\beta}$, 
so it follows that each element of the diagonal
of $\m a(\alpha)$ is a singleton $\theta$-class.
Choose an arbitrary pair 
$((a,c),(b,c))\in\theta$.
Let $T(x_1,\ldots,x_{n})$ be a Taylor term for $\mathcal V$.
Consider a first-place Taylor identity
$T(x,\wec{w})\approx T(y,\wec{z})$ where
$\wec{w}, \wec{z}\in \{x,y\}^{n-1}$.
Substitute $b$ for all occurrences of $x$ and $c$
for all occurrences of $y$.
This yields $T(b,\bar{u}) = T(c,\bar{v})$
where all $u_i$ and $v_i$ are in $\{b,c\}$.
Since $(b,c)\in\m A(\alpha)$ we have
$b\stackrel{\alpha}{\equiv} c$, hence
$(u_i,v_i)\in\alpha$ for all $i$. This implies
that $p((x,y)) = (T(x,\bar{u}),T(y,\bar{v}))$
is a unary polynomial of $\m a(\alpha)$.
The equation $T(x,\wec{w})\approx T(y,\wec{z})$
implies that 
$p((b,c))$ lies on the diagonal of $\m a(\alpha)$.
The element $p((a,c))$ is $\theta$-related to
$p((b,c))$, and each element of the diagonal
is a singleton $\theta$-class, therefore
$p((a,c)) = p((b,c))$. This has the consequence
that $T(a,\bar{u}) = T(b,\bar{u})$
where each $u_i\in \{b,c\}$.
Now, since
$((a,c),(b,c))\in\theta
\leq\Delta_{\alpha,\beta}\leq \beta_1\times\beta_2$
we get that $(a,b)\in\beta$. 
Since $(a,c)$ and $(b,c)$ are elements of our algebra
we have $a\stackrel{\alpha}{\equiv} c\stackrel{\alpha}{\equiv}b$,
so $(a,b)\in\alpha$.
Together, the last two sentences show that
$(a,b)\in\alpha\cap\beta$.
Now, applying $[\alpha,\alpha\cap\beta]=0$
to the equality $T(a,\bar{u}) = T(b,\bar{u})$,
we deduce that 
$T(a,\bar{y}) = T(b,\bar{y})$
for any $\bar{y}$ whose entries
lie in the $\alpha$-class containing $a, b$ and $c$.

The argument we just gave concerning $a$, $b$ and $T$,
which showed that $T(a,\bar{y}) = T(b,\bar{y})$
whenever each $y_i$ is in the $\alpha$-class
containing $a, b$, and $c$ is an argument which
works in each of the $n$ variables of $T$.
That is,
\[T(y_1,\ldots,y_{i-1},a,y_{i+1},\ldots,y_n) = 
T(y_1,\ldots,y_{i-1},b,y_{i+1},\ldots,y_n)\]
for each $i$ and any choice of values for 
$y_1,\ldots,y_n$ in the $\alpha$-class of $a, b$, and $c$.
Therefore, using the
fact that $T$ is idempotent, we have
\[
a=T(a,a,\ldots,a)=T(b,a,\ldots,a)=\cdots=T(b,b,\ldots,b)=b.
\]
This proves that $(a,c)=(b,c)$.
Since $((a,c),(b,c))\in\theta$
was arbitrarily chosen, $\theta =\eta_2\cap\Delta_{\alpha,\beta}= 0$,
completing the proof that (a) implies (b).

Now we argue by contraposition that (b) implies (c).
Assume that (c) fails because $[\alpha,\beta]>0$.
There is an $\alpha,\beta$-matrix
\[
\begin{bmatrix}
p&q\\r&s\\
\end{bmatrix}
=
\begin{bmatrix}
t(\wec{e},\wec{u})&  t(\wec{e},\wec{v})\\
t(\wec{f},\wec{u})&  t(\wec{f},\wec{v})\\
\end{bmatrix}
\]
where $(e_i,f_i)\in \alpha$, $(u_j,v_j)\in\beta$,
$p=q$, and $r\neq s$.
The pair
\[
((r,p), (s,q))
=(t((\wec{f},\wec{e}),(\wec{u},\wec{u})), 
t((\wec{f},\wec{e}),(\wec{v},\wec{v})))
\]
belongs to $\eta_2$ (since $p=q$)
and $\Delta_{\alpha,\beta}$
(since $((u_i,u_i), (v_i,v_i))\in\Delta_{\alpha,\beta}$),
but not to $\eta_1$ (since $r\neq s$).
Therefore,
$((r,p), (s,q))\in(\eta_2\cap\Delta_{\alpha,\beta})-0$,
establishing that
$\eta_2\cap\Delta_{\alpha,\beta}\neq 0$.
This shows that if (c) fails because $[\alpha, \beta]>0$,
then (b) fails because $\eta_2\cap\Delta_{\alpha,\beta}>0$.

Now assume that (c) fails because $[\beta,\alpha\cap \beta]>0$.
There is a $\beta,\alpha\cap\beta$-matrix
\[
\begin{bmatrix}
p&q\\r&s\\
\end{bmatrix}
=
\begin{bmatrix}
t(\wec{e},\wec{u})&  t(\wec{e},\wec{v})\\
t(\wec{f},\wec{u})&  t(\wec{f},\wec{v})\\
\end{bmatrix}
\]
where $(e_i,f_i)\in \beta$, $(u_j,v_j)\in\alpha\cap \beta$,
$p=q$, and $r\neq s$.
The pair
\[
((p,q), (r,s))
=(t((\wec{e},\wec{e}),(\wec{u},\wec{v})), 
t((\wec{f},\wec{f}),(\wec{u},\wec{v})))
\]
belongs to $\Delta_{\alpha,\beta}$
(since $((e_i,e_i),(f_i,f_i))\in \Delta_{\alpha,\beta}$).
The pair $(r,p)$ belongs to $\beta$
(since $(p,r)=(t(\wec{e},\wec{u}),t(\wec{f},\wec{u}))$ and
$(e_i,f_i)\in\beta$).
Hence
\[
(r,r)\;\stackrel{\Delta_{\alpha,\beta}}{\equiv}\;
(p,p)=(p,q)\;\stackrel{\Delta_{\alpha,\beta}}{\equiv}\;(r,s).
\]
Therefore, $((r,r), (r,s)) \in \eta_1\cap \Delta_{\alpha,\beta}$,
but $((r,r), (r,s))\notin \eta_2$ since $r\neq s$.
This shows that if (c) fails because $[\beta,\alpha\cap \beta]>0$,
then (b) fails because $\eta_1\cap \Delta_{\alpha,\beta} > 0$.

At this point we have shown that
(a) $\Rightarrow$ (b) $\Rightarrow$ (c).
Interchanging the roles of $\alpha$ and $\beta$ in these arguments
we deduce that (c) $\Rightarrow$ (d) $\Rightarrow$ (a).
This shows that all four properties are equivalent.
The commutator congruence quasi-identity of the theorem
statement follows from the equivalence of (a) and (c).
\end{proof}

\begin{thm} \label{dist_implies_comm}
  If $\mathcal V$ has a Taylor term, then the
  commutator operation in $\mathcal V$ is
  left distributive if and only if it is
  right distributive. If either distributivity
  condition holds, then the commutator is
  commutative in $\mathcal V$.
\end{thm}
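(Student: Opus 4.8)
The plan is to reduce the statement to two facts: first, that \emph{either} distributivity law forces commutativity of the commutator (the Taylor hypothesis being needed only when we start from right distributivity); and second, that once the commutator is known to be commutative, left and right distributivity are literally the same identity up to transposing the two arguments, hence equivalent. For the left‑distributive case there is nothing new to do, since Theorem~\ref{distributive_thm}(1) already gives commutativity with no hypothesis on $\mathcal V$. So the one substantive implication is: \emph{if $\mathcal V$ has a Taylor term and the commutator is right distributive, then the commutator is commutative.}

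To prove that, I would first invoke Theorem~\ref{distributive_thm}(2), which gives the partial commutativity $[x,y]\le[y,x]$ for every comparable pair $y\le x$ in every $\m a\in\mathcal V$; the task is then to upgrade this inequality to an equality on comparable pairs and finally to full commutativity. Fix $\m a\in\mathcal V$ and congruences $y\le x$, and put $\varepsilon=[x,y]$. Since the commutator always lies below the meet of its arguments (immediate from Theorem~\ref{basic_centrality}(8)) we have $\varepsilon\le y\le x$, so we may pass to $\m a/\varepsilon\in\mathcal V$ and write $\bar x=x/\varepsilon$, $\bar y=y/\varepsilon$. By Theorem~\ref{basic_centrality}(10) and the choice of $\varepsilon$ we get $[\bar x,\bar y]=0$; by monotonicity in the first variable (Theorem~\ref{basic_centrality}(1)), since $\bar y\le\bar x$, this forces $[\bar y,\bar y]=0$; and of course $\bar y\cap\bar x=\bar y$. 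Now the commutator congruence quasi‑identity of Theorem~\ref{refinement} (available because $\mathcal V$ has a Taylor term), applied with $\alpha=\bar y$ and $\beta=\bar x$, reads $[\bar x,\bar y]=0\ \&\ [\bar y,\bar y\cap\bar x]=0\Rightarrow[\bar y,\bar x]=0$, so $[\bar y,\bar x]=0$, i.e. $[y,x]\le\varepsilon=[x,y]$. Together with Theorem~\ref{distributive_thm}(2) this yields $[x,y]=[y,x]$ on all comparable pairs throughout $\mathcal V$, and then Theorem~\ref{main1} promotes this to commutativity of the commutator throughout $\mathcal V$.

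It remains to close the loop by pure rewriting. If the commutator is right distributive then, by the previous paragraph, it is commutative, and $[y+z,x]=[x,y+z]=[x,y]+[x,z]=[y,x]+[z,x]$, which is left distributivity. Conversely, if the commutator is left distributive then it is commutative by Theorem~\ref{distributive_thm}(1), and symmetrically $[x,y+z]=[y+z,x]=[y,x]+[z,x]=[x,y]+[x,z]$ is right distributivity; neither transposition argument uses the Taylor term. Hence, under the standing hypothesis that $\mathcal V$ has a Taylor term, left and right distributivity of the commutator are equivalent and either one entails commutativity. I expect the only genuinely non‑formal step — and the place where the Taylor hypothesis is indispensable — to be the reverse inequality $[y,x]\le[x,y]$ on comparable pairs: the quasi‑identity of Theorem~\ref{refinement} is precisely what turns the left/right‑asymmetric conclusion of Theorem~\ref{distributive_thm}(2) into a symmetric one, and without a Taylor term nothing can be said (right distributivity holds vacuously in, for instance, the variety of sets).
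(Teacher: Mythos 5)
Your proposal is correct, and it takes a genuinely different route from the paper on the substantive implication (Taylor term $+$ right distributivity $\Rightarrow$ commutativity). The paper argues by contradiction: Lemma~\ref{asymmetry} arranges $0=[\beta,\alpha]<[\alpha,\beta]$, the contrapositive of the quasi-identity in Theorem~\ref{refinement} then forces $[\alpha,\alpha\cap\beta]>0$, and setting $x=\alpha$, $y=\alpha\cap\beta$ gives $[y,x]\le[\beta,\alpha]=0<[x,y]$, directly contradicting Theorem~\ref{distributive_thm}(2). You instead prove an \emph{unconditional} inequality for Taylor varieties --- for any comparable $y\le x$, passing to $\m a/[x,y]$ and applying the Theorem~\ref{refinement} quasi-identity with $\alpha=\bar y$, $\beta=\bar x$ yields $[y,x]\le[x,y]$ --- and then combine it with the reverse inequality from Theorem~\ref{distributive_thm}(2) to obtain commutativity on comparable pairs, finishing with Theorem~\ref{main1}. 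Your computation is correct (the application of the quasi-identity with $\alpha=\bar y$, $\beta=\bar x$ and $\bar y\cap\bar x=\bar y$ checks out), and there is no circularity since Theorems~\ref{refinement} and \ref{main1} precede \ref{dist_implies_comm}. What the paper's route buys is a lighter footprint: it needs only Lemma~\ref{asymmetry}, Theorem~\ref{refinement}, and Theorem~\ref{distributive_thm}(2), none of which depends on the difference-term machinery. What your route buys is a clean standalone fact worth noting explicitly --- in any Taylor variety, $[y,x]\le[x,y]$ whenever $y\le x$ --- at the cost of invoking the heavier Theorem~\ref{main1} (and transitively the weak-difference-term theory behind it) to promote comparable commutativity to full commutativity.
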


\begin{proof}
  By Theorem~\ref{distributive_thm}~(1),
  if $\mathcal V$ has left distributive commutator,
  then it has commutative commutator,
  hence it has right distributive commutator.
  This part of the theorem did not
  require the assumption that $\mathcal V$ has a Taylor term.

  Now assume that $\mathcal V$ has a Taylor term
  and has right distributive commutator.
  We shall argue that the commutator
  operation in $\mathcal V$ is commutative,
  hence left distributive. 
  This is a proof by contradiction, so assume also 
  that
  some algebra in $\mathcal V$
  has noncommutative commutator.
  By Lemma~\ref{asymmetry}, we may assume that
  some $\m a\in{\mathcal V}$
  has congruences $\alpha$ and $\beta$
  such that $0=[\beta,\alpha]<[\alpha,\beta]$.
Theorem~\ref{refinement} implies that
$0<[\alpha,\alpha\cap \beta]$.
This means that for $x:=\alpha$ and $y:=\alpha\cap \beta$
we have $y\leq x$, $0<[x,y]$, and
$[y,x]=[\alpha\cap\beta,\alpha]\leq [\beta,\alpha]=0$,
so $[x,y]\not\leq [y,x]$, in 
contradiction to Theorem~\ref{distributive_thm}~(2).
\end{proof}  

We strengthen the previous theorem with the following
result, which is one of the primary results of this article.

\begin{thm} \label{main2}
If $\mathcal V$ has a Taylor term, 
then the following are equivalent:
\begin{enumerate}
\item  $\mathcal V$ is congruence modular.
\item  The commutator is left distributive in $\mathcal V$.
\item  The commutator is right distributive in $\mathcal V$.
\end{enumerate}  
\end{thm}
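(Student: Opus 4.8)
I would prove the cycle of implications $(1)\Rightarrow(2)\Rightarrow(3)\Rightarrow(1)$. The middle implication is already available: under the standing hypothesis that $\mathcal V$ has a Taylor term, Theorem~\ref{dist_implies_comm} asserts that the commutator is left distributive in $\mathcal V$ if and only if it is right distributive in $\mathcal V$, so $(2)\Rightarrow(3)$ (and $(3)\Rightarrow(2)$) is immediate. Thus the real content is the two extreme implications.

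For $(1)\Rightarrow(2)$ I would not use the Taylor term hypothesis at all; this is the classical theory of the modular commutator. In a congruence modular variety the commutator operation coincides with the modular commutator of \cite{freese-mckenzie}, which is symmetric, monotone, and additive in each variable, i.e.\ $[\alpha,\bigvee_i\beta_i]=\bigvee_i[\alpha,\beta_i]$. Additivity together with symmetry yields both left and right distributivity, so $\mathcal V$ satisfies $(2)$ (and $(3)$). I would simply cite \cite{freese-mckenzie} for this.

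The main step is $(3)\Rightarrow(1)$. Assume $\mathcal V$ has a Taylor term and a right distributive commutator; by Theorem~\ref{dist_implies_comm} the commutator is then also commutative throughout $\mathcal V$. Suppose toward a contradiction that $\mathcal V$ is \emph{not} congruence modular, so some $\m a\in\mathcal V$ has a pentagon sublattice of $\Con(\m a)$ labeled as in Figure~\ref{fig4}: congruences $\beta,\delta,\theta,\alpha$ with $\delta<\theta$, with $\alpha=\beta+\delta=\beta+\theta$ the top, and with $\beta\cap\theta=\beta\cap\delta$ the bottom, hence $\beta\cap\theta\leq\delta$. Now $[\theta,\beta]\leq\theta\cap\beta\leq\delta$ and $[\theta,\delta]\leq\theta\cap\delta=\delta$, so right distributivity gives
$[\theta,\alpha]=[\theta,\beta+\delta]=[\theta,\beta]+[\theta,\delta]\leq\delta$,
that is, $\C C(\theta,\alpha;\delta)$ holds. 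By commutativity of the commutator, $\C C(\alpha,\theta;\delta)$ holds, and then by monotonicity of the centralizer in its first variable (Theorem~\ref{basic_centrality}(1)), $\C C(\beta,\theta;\delta)$ holds, since $\beta\leq\alpha$. But a pentagon sublattice of $\Con(\m a)$ in a variety with a Taylor term cannot satisfy $\C C(\beta,\theta;\delta)$, by Theorem~\ref{memoir_pentagon} --- a contradiction. Hence $\mathcal V$ is congruence modular.

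\textbf{Main obstacle.} I expect no serious difficulty: essentially all the substantive work has already been isolated in Theorems~\ref{dist_implies_comm} and \ref{memoir_pentagon}, and what remains of $(3)\Rightarrow(1)$ is the short pentagon computation above (plus the standard facts $[x,y]\leq x\cap y$ and the correct reading of the pentagon: the bottom lies below $\delta$). The only point requiring a little care is confirming that a \emph{general} pentagon sublattice of $\Con(\m a)$ matches the labeling of Figure~\ref{fig4} without assuming its bottom is $0$ or its top is the full congruence, but Theorem~\ref{memoir_pentagon} is stated precisely for pentagon sublattices, so no adjustment is needed.
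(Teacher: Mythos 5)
There is a genuine gap in your proof of the implication $(3)\Rightarrow(1)$, and it is at the heart of the argument.

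After correctly computing $[\theta,\alpha]=[\theta,\beta]+[\theta,\delta]\leq\delta$ from right distributivity, you write ``that is, $\C C(\theta,\alpha;\delta)$ holds.'' This inference is not valid. By definition, $[\theta,\alpha]$ is the \emph{least} congruence $\gamma$ for which $\C C(\theta,\alpha;\gamma)$ holds, but the centralizer relation is not monotone in its third argument, so $[\theta,\alpha]\leq\delta$ does not yield $\C C(\theta,\alpha;\delta)$. Indeed the implication you are tacitly using, namely ``$[x,y]\leq z\Rightarrow\C C(x,y;z)$,'' is exactly the property ``the centralizer relation is determined by the commutator'' from the list in the Introduction, and by Theorem~\ref{main3} it is (given a Taylor term) itself equivalent to congruence modularity. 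So invoking it to prove modularity would be circular. The subsequent step, passing from $\C C(\theta,\alpha;\delta)$ to $\C C(\alpha,\theta;\delta)$ ``by commutativity of the commutator,'' has the same problem if taken literally (commutativity is an identity on commutators, not on centralizer relations), though that step can be repaired by factoring through $\m a/\delta$, swapping at $0$ (where $[x,y]=0$ \emph{is} equivalent to $\C C(x,y;0)$), and pulling back via Theorem~\ref{basic_centrality}(10); the first step cannot be repaired so cheaply.

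The paper closes the gap with the difference term machinery, which your proposal never invokes. After computing $\sigma:=[\alpha,\theta]\leq\delta$ (from left distributivity; equivalent via Theorem~\ref{dist_implies_comm}), the paper first uses Theorem~\ref{main1} (Taylor term $+$ commutative commutator $\Rightarrow$ difference term, hence weak difference term), then passes to $\m a/\sigma$ where $[\bar\alpha,\bar\theta]=0$ and $\bar\alpha\geq\bar\theta\geq\bar\delta$, and applies Theorem~\ref{stable}, which in the presence of a weak difference term yields $\C C(\bar\theta,\bar\alpha;\bar\delta)$; Theorem~\ref{basic_centrality}(10) then gives $\C C(\theta,\alpha;\delta)$ in $\m a$. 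Only after this centrality is secured does one do the commutativity swap modulo $\delta$ and derive the contradiction with Theorem~\ref{memoir_pentagon}. In short, you need to first go through Theorem~\ref{main1} to get a (weak) difference term and then use Theorem~\ref{stable} (or equivalently, the weak stability of Theorem~\ref{main4}) to convert the commutator inequality into the centralizer statement; without that detour, the pentagon computation does not produce the contradiction.
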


\begin{proof}
For any congruence modular variety,
the commutator is both left and right distributive.
(See \cite[Proposition~4.3]{freese-mckenzie} for the
fact that the modular commutator is right distributive
and commutative.)
This shows that Item~(1) implies both Item~(2) and Item~(3).

By Theorem~\ref{dist_implies_comm},
Items~(2) and (3) are equivalent in the presence of a Taylor term,
and they imply that the commutator is commutative
in $\mathcal V$.
From commutativity, we derive the existence of a difference
term from Theorem~\ref{main1}.
Thus, it remains to prove that if $\mathcal V$
has (left) distributive commutator
and a difference term, then $\mathcal V$
is congruence modular.
This fact follows from Theorem~3.2~(i) of \cite{lipparini},
but we give an argument for this based on the results
of this paper.

We are going to argue by contradiction, 
so assume that the commutator is left distributive
throughout $\mathcal V$, but there is some
algebra in $\mathcal V$ that
does not have a modular congruence lattice.
We can find such an algebra $\m a\in \mathcal V$
with congruences
$\beta, \theta, \delta \in\Con(\m a)$
generating a pentagon satisfying $\delta < \theta$,
$\beta\cap \theta =0 \leq \delta$, and $\alpha:=\beta+\delta\geq \theta$
(Figure~\ref{fig12}).
\begin{figure}[ht]
\begin{center}
\setlength{\unitlength}{1mm}
\begin{picture}(20,33)
\put(0,15){\circle*{1.2}}
\put(10,0){\circle*{1.2}}
\put(10,30){\circle*{1.2}}
\put(20,10){\circle*{1.2}}
\put(20,20){\circle*{1.2}}

\put(10,0){\line(-2,3){10}}
\put(10,0){\line(1,1){10}}
\put(10,30){\line(-2,-3){10}}
\put(10,30){\line(1,-1){10}}
\put(20,20){\line(0,-1){10}}

\put(-4.5,13){$\beta$}
\put(22,9){$\delta$}
\put(22,19){$\theta$}
\put(9,32){$\alpha$}
\put(9,-5){$0$}
\end{picture}
\bigskip

\caption{\sc $\textrm{Con}(\m A)$.}\label{fig12}
\end{center}
\end{figure}
By the left distributivity of the commutator,
\[
[\alpha,\theta]=[\beta+\delta,\theta]=[\beta,\theta]+[\delta,\theta]
=0+[\delta,\theta] \leq \delta.  
\]
In this line we are using that $[\beta,\theta]\leq \beta\cap \theta=0$
to obtain the last equality and last inequality.

Let $\sigma$ denote $[\alpha,\theta]=[\delta,\theta]$,
a congruence which satisfies $0\leq \sigma\leq \delta$.
Working modulo $\sigma$, and writing $\overline{x}$
for $x/\sigma$ for any congruence $x\geq \sigma$, we have
\begin{enumerate}
\item $\overline{\alpha}\geq \overline{\theta}\geq \overline{\delta}$, and
\item $[\overline{\alpha},\overline{\theta}]=0$.
\end{enumerate}
Since $\mathcal V$ has a difference term,
Theorem~\ref{stable} guarantees that 
$\C C(\overline{\theta},\overline{\alpha};\overline{\delta})$ holds
in $\m a/\sigma$.
Since $\overline{\alpha}\geq \overline{\theta}\geq \overline{\delta}$, 
Theorem~\ref{basic_centrality}~(10) guarantees that
in $\m a/\delta$ we have
$\C C(\overline{\overline{\theta}},\overline{\overline{\alpha}};0)$,
where $\overline{\overline{x}}$ denotes $x/\delta$.
Because $\C C(\overline{\overline{\theta}},\overline{\overline{\alpha}};0)$
holds and the commutator is commutative in $\mathcal V$
we derive that
\[
0 = [\overline{\overline{\theta}},\overline{\overline{\alpha}}]=
[\overline{\overline{\alpha}},  \overline{\overline{\theta}}].
\]
Hence
$\C C(\overline{\overline{\alpha}},\overline{\overline{\theta}};0)$
holds in $\m a/\delta$.
Theorem~\ref{basic_centrality}~(10) guarantees that
$\C C(\alpha,\theta;\delta)$
holds in $\m a$, and so by monotonicity
$\C C(\beta,\theta;\delta)$
holds in $\m a$. This instance of centrality
in a pentagon contradicts
Theorem~\ref{memoir_pentagon}.
\end{proof}
\bigskip

Next we consider the existence of right annihilators
and the right semidistributivity
of the commutator.

\begin{thm} \label{main1.5}
If $\mathcal V$ has a Taylor term, 
then the following are equivalent:
\begin{enumerate}
\item  $\mathcal V$ has a difference term.
\item Right annihilators exist throughout $\mathcal V$.
\item The commutator is right semidistributive throughout $\mathcal V$.
\end{enumerate}  
\end{thm}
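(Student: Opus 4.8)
The plan is to run the cycle $(1)\Rightarrow(2)\Rightarrow(3)\Rightarrow(1)$; only the last implication needs the Taylor hypothesis, and in fact $(2)$ and $(3)$ are equivalent in every variety. For $(1)\Rightarrow(2)$: by Theorem~\ref{main1}, a difference term forces $\mathcal V$ to have a Taylor term and commutative commutator. The left annihilator $(0:\beta)_L$ of any $\beta\in\Con(\m a)$, $\m a\in\mathcal V$, always exists — the join of all congruences $\gamma$ with $\C C(\gamma,\beta;0)$ is again such a congruence, by Theorem~\ref{basic_centrality}~(2) and~(5) — and commutativity makes it also the largest $\gamma$ with $\C C(\beta,\gamma;0)$, i.e.\ $(0:\beta)_R$.

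For $(2)\Rightarrow(3)$ (valid in any variety): suppose $[x,y]=[x,y']=:c$ in some $\Con(\m a)$. By Theorem~\ref{basic_centrality}~(8) the commutator sits below the meet of its arguments, so $c\leq x$ and $c\leq y,y'$, hence $c\leq y+y'$; and $\C C(x,y;c)$ holds because $c=[x,y]$. Theorem~\ref{basic_centrality}~(10) together with the relative-commutator identity of Lemma~\ref{asymmetry} then gives $[x/c,y/c]=[x,y]_c/c=0$ and $[x/c,y'/c]=0$ in $\m a/c$, so $y/c$ and $y'/c$ lie below the right annihilator $(0:x/c)_R$ furnished by~(2); that annihilator is a congruence, hence contains $(y+y')/c=y/c+y'/c$, so $[x/c,(y+y')/c]=0$, i.e.\ $[x,y+y']\leq c$. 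Since $[x,y+y']\geq[x,y]=c$ by monotonicity (Theorem~\ref{basic_centrality}~(1)), we conclude $[x,y+y']=[x,y]$. (The converse $(3)\Rightarrow(2)$ is also general: right semidistributivity makes $\{\gamma\mid\C C(\beta,\gamma;0)\}$ closed under finite joins, and it is automatically closed under directed joins — a failure of $\C C(\beta,\bigcup_i\gamma_i;0)$ would involve only finitely many pairs, hence occur inside a single $\gamma_i$ — so it has a largest element. One may therefore run the cycle via $(3)\Rightarrow(2)\Rightarrow(1)$ just as well.)

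For $(3)\Rightarrow(1)$ I argue the contrapositive. Assume $\mathcal V$ has a Taylor term but no difference term; I shall exhibit $\m a\in\mathcal V$ with congruences $\beta_0,\gamma_1,\gamma_2$ satisfying $[\beta_0,\gamma_1]=[\beta_0,\gamma_2]=0<[\beta_0,\gamma_1+\gamma_2]$, which contradicts right semidistributivity (and, since it also shows $(0:\beta_0)_R$ does not exist, simultaneously gives $(2)\Rightarrow(1)$). By Lemma~\ref{commutative_plus_Taylor_lm} there is $\m a\in\mathcal V$ carrying a pentagon labelled as in Figure~\ref{fig11} with $[\alpha,\theta]=0$, $\C C(\theta,\alpha;\delta)$, and $[\alpha,x]_\delta=x$ for $x\in I[\delta,\theta]$; from the proof of that lemma one also gets $[\theta,\theta]=0$, and necessarily $0<\delta<\theta$ (if $\delta=0$ then $\C C(\alpha,\theta;0)$ would both hold, by $[\alpha,\theta]=0$, and fail, by $[\alpha,\theta]_\delta=\theta>\delta$). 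Passing to $\m a/\delta$ localizes the obstruction exactly as in Example~\ref{simple_ex}: writing primes for $\delta$-quotients, $\theta'$ is abelian, $[\theta',\alpha']=0$, yet $[\alpha',\theta']=\theta'>0$. One then imitates the $\Delta$-construction from the proof of Theorem~\ref{distributive_thm}: work in the subalgebra of $(\m a/\delta)^2$ supported on $\alpha'$ (or on $\theta'$), with its projection kernels $\eta_1,\eta_2$ and the congruence $\Delta$ generated by an appropriate diagonal relation. The vanishing $[\theta',\alpha']=0$ makes the relevant diagonal a union of $\Delta$-classes, which forces $[\eta_1,\eta_2]$ and the $[\eta_i,\Delta]$ to restrict trivially there, while the failure of the term condition $\C C(\alpha',\theta';0)$ recorded by $[\alpha',\theta']\neq0$ makes the corresponding ``product'' congruence have a nontrivial commutator with $\eta_1$ on that same diagonal — producing the strict inequality.

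The hard part is exactly this last construction: naming the congruences that play the roles of $\beta_0,\gamma_1,\gamma_2$ and certifying $[\beta_0,\gamma_1+\gamma_2]>0$. The delicacy is that $[\theta',\alpha']=0$ and $[\alpha',\theta']\neq0$ point in opposite directions, so the witnesses cannot be the obvious ones built from $\eta_1,\eta_2$ and $\Delta_{\alpha',\theta'}$; the two facts must be inserted at slightly different coordinates of the square, in the spirit of the matrix~\eqref{MM} in the proof of Theorem~\ref{distributive_thm}, with the $\delta$-quotient carried along throughout. Everything else above is routine given Theorems~\ref{main1}, \ref{basic_centrality}, Lemma~\ref{asymmetry}, and Lemma~\ref{commutative_plus_Taylor_lm}.
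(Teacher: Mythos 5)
Your treatment of $(1)\Rightarrow(2)$ and $(2)\Rightarrow(3)$ is correct and matches the paper's argument almost verbatim (pass to $\m a/[\alpha,\beta]$, use the right annihilator there, then lift back). Your extra observation that $(3)\Rightarrow(2)$ holds in every variety --- because right semidistributivity plus closure under directed joins forces the set $\{\gamma : \C C(\beta,\gamma;0)\}$ to have a top element --- is correct and slightly cleaner than what the paper does, which is to establish the cycle $(1)\Rightarrow(2)\Rightarrow(3)$ and $\neg(1)\Rightarrow\neg(3)$.

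However, $\neg(1)\Rightarrow\neg(3)$ is where the theorem's content lives, and your proposal does not actually prove it. You set the stage correctly: invoke Lemma~\ref{commutative_plus_Taylor_lm}, pass to $\m b=\m a/\delta$, and record $[\overline\theta,\overline\alpha]=0$, $[\overline\alpha,\overline\theta]=\overline\theta>0$. But you then say ``one imitates the $\Delta$-construction'' and close with ``The hard part is exactly this last construction: naming the congruences that play the roles of $\beta_0,\gamma_1,\gamma_2$ and certifying $[\beta_0,\gamma_1+\gamma_2]>0$.'' That is precisely the unresolved content. The paper's solution is not a one-shot matrix trick: inside $\m d=\m b(\overline\alpha)$ it introduces $\eta=\eta_1$, $\Delta=\Delta_{\overline\alpha,\overline\theta}$, and $\Gamma=\overline\theta_1\times\eta_2$, and then builds the infinite ``herringbone'' chains $\eta^0\leq\eta^1\leq\cdots$, $\Delta=\Delta^1\leq\Delta^3\leq\cdots$, $\Gamma=\Gamma^0\leq\Gamma^2\leq\cdots$ with unions $\eta^\infty,\Delta^\infty,\Gamma^\infty$; the witnesses are $\eta/\eta^\infty$, $\Delta^\infty/\eta^\infty$, $\Gamma^\infty/\eta^\infty$. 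The meets $\eta\cap\Delta^\infty=\eta\cap\Gamma^\infty=\eta^\infty$ give the two vanishing commutators essentially for free via Theorem~\ref{basic_centrality}~(8), but the nonvanishing of $[\eta,\Delta+\Gamma]_{\eta^\infty}$ requires (i) exhibiting a concrete $\eta,(\Delta+\Gamma)$-matrix that is constant on top and nonconstant below (this is where the coordinates are inserted ``differently'' as you anticipate), and (ii) showing that $\eta^\infty$ restricts to the identity on the $\overline\theta_1\times\overline\theta_2$-class $U$ containing the bottom row --- and step (ii) is exactly where the Taylor term is spent, via a localized version of the argument for Theorem~\ref{refinement}, followed by an induction transferring $\eta^1|_U=0_U$ up to $\eta^\infty|_U=0_U$ through the alternating pentagon quotients. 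None of this transfinite iteration or the localization of the Taylor argument appears in your sketch, and without it the claimed triple $\beta_0,\gamma_1,\gamma_2$ does not exist; the ``obvious'' single-step witnesses genuinely fail, as you correctly suspected, which is why the iteration is unavoidable.
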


\begin{proof}
We shall argue that
$(1)\Rightarrow(2)\Rightarrow(3)$
and $\neg(1)\Rightarrow \neg(3)$.

Assume (1), that $\mathcal V$ has a difference term.
According to Theorem~\ref{main1},
$\mathcal V$ has commutative commutator.
Since the left annihilator of any congruence
on any algebra exists, 
right annihilators will also exist in any variety with a commutative
commutator (and $(0:\beta)_R = (0:\beta)_L$ will hold).
This proves that (2) holds.

Now assume that (2) holds.
Assume that $\m a\in {\mathcal V}$ and that
$\alpha,\beta,\beta'\in\Con(\m a)$
satisfy $[\alpha,\beta]=[\alpha,\beta']$.
The congruence $\delta:=[\alpha,\beta]$ is below
each of 
$\alpha, \beta, \beta'$ according to
Theorem~\ref{basic_centrality}~(7),
and both
$\C C(\alpha,\beta;\delta)$ and
$\C C(\alpha,\beta';\delta)$ hold since
$[\alpha,\beta]=\delta=[\alpha,\beta']$.
Let's factor by $\delta=[\alpha,\beta]$ to obtain
$\overline{\m a} = \m a/\delta\in{\mathcal V}$,
$\overline{\alpha}=\alpha/\delta$,
$\overline{\beta}=\beta/\delta$,
$\overline{\beta}'=\beta'/\delta$.
Both
$\C C(\overline{\alpha},\overline{\beta};0)$ and
$\C C(\overline{\alpha},\overline{\beta}';0)$ hold
in $\overline{\m a}$ by Theorem~\ref{basic_centrality}~(10).
This  implies that
$\overline{\beta}, \overline{\beta}'\leq (0:\overline{\alpha})_R$
in $\Con(\overline{\m a})$.
Hence
$\overline{\beta} + \overline{\beta}'\leq (0:\overline{\alpha})_R$.
Hence
$\C C(\overline{\alpha},\overline{\beta}+\overline{\beta}';0)$
in $\overline{\m a}$ by the definition of $(0:\overline{\alpha})_R$.
Hence
$\C C(\alpha,\beta+\beta';\delta)$ in $\m a$ by
Theorem~\ref{basic_centrality}~(10).
Hence
\begin{equation}\label{inequalities}
[\alpha,\beta]\leq [\alpha,\beta+\beta']\leq \delta=[\alpha,\beta].
\end{equation}
Here the first $\leq$ is an instance of the monotonicity of the commutator
in its second variable,
while the second $\leq$ follows from $\C C(\alpha,\beta+\beta';\delta)$
and the definition
of the commutator. Altogether, the line
(\ref{inequalities}) yields that 
$[\alpha,\beta] = [\alpha,\beta+\beta']$, completing the proof of the
right semidistributivity of the commutator.

The rest of the argument is devoted to establishing
the difficult implication $\neg(1)\Rightarrow \neg(3)$.
We start with the assumptions that $\mathcal V$ has
a Taylor term but does not have a difference term
and construct a failure of right semidistributivity
in the congruence lattice of some algebra in $\mathcal V$.
Since $\mathcal V$ has
a Taylor term but does not have a difference term,
Lemma~\ref{commutative_plus_Taylor_lm}
guarantees the existence of an algebra $\m a\in \mathcal V$
with a pentagon in its congruence lattice,
labeled as in Figure~\ref{fig11},
satisfying the commutator conditions (1), (2), and (3)
of that lemma. We shall start our construction
with the quotient $\m b=\m a/\delta$.
Writing $\overline{x}$ for $x/\delta$, 
Lemma~\ref{commutative_plus_Taylor_lm}
guarantees that $\m b$ has congruences
$0<\overline{\theta}<\overline{\alpha}$ such that
(i) $[\overline{\theta},\overline{\alpha}]=0$
(from Item (2) of that lemma) and
(ii) $[\overline{\alpha},\overline{x}]=\overline{x}$
for any congruence $\overline{x}$ satisfying
$0 \leq \overline{x}\leq\overline{\theta}$
(from Item (3) of that lemma).
In particular, we shall use the part of (ii) that guarantees that
$[\overline{\alpha},\overline{\theta}]=\overline{\theta}$.
Let $\m d = \m b(\overline{\alpha})$.
Write $\Delta$ for $\Delta_{\overline{\alpha},\overline{\theta}}\;(\in\Con(\m d))$.
Write $\Gamma$ for $\overline{\theta}_1\times \eta_2\;(\in\Con(\m d))$.
It is clear that $\Delta, \Gamma,
\leq \overline{\theta}_1\times \overline{\theta}_2$.
As usual, the coordinate projection kernels
on $\m d = \m b(\overline{\alpha})$ will be denoted
$\eta_1$ and $\eta_2$,
but to minimize subscripts below we shall 
use $\eta$ as a duplicate name for $\eta_1$ (that is, $\eta:=\eta_1$).
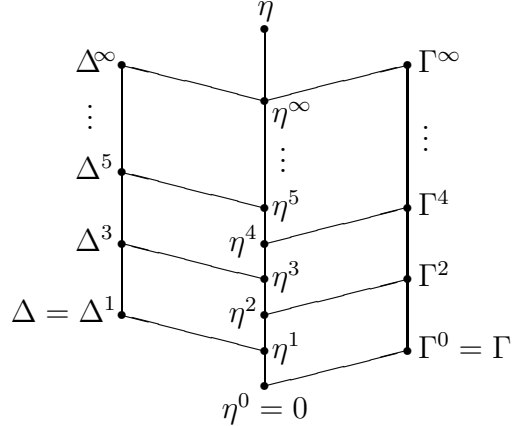
\begin{figure}[ht]
\begin{center}
\setlength{\unitlength}{.95mm}
\begin{picture}(40,50)
\put(20,0){\circle*{1.2}}
\put(20,5){\circle*{1.2}}
\put(20,10){\circle*{1.2}}
\put(20,15){\circle*{1.2}}
\put(20,20){\circle*{1.2}}
\put(20,25){\circle*{1.2}}
\put(20,40){\circle*{1.2}}
\put(20,50){\circle*{1.2}}

\put(40,5){\circle*{1.2}}
\put(40,15){\circle*{1.2}}
\put(40,25){\circle*{1.2}}
\put(40,45){\circle*{1.2}}

\put(0,10){\circle*{1.2}}
\put(0,20){\circle*{1.2}}
\put(0,30){\circle*{1.2}}
\put(0,45){\circle*{1.2}}

\put(0,10){\line(0,1){35}}
\put(20,0){\line(0,1){50}}
\put(40,5){\line(0,1){40}}

\put(20,0){\line(4,1){20}}
\put(20,10){\line(4,1){20}}
\put(20,20){\line(4,1){20}}

\put(20,5){\line(-4,1){20}}
\put(20,15){\line(-4,1){20}}
\put(20,25){\line(-4,1){20}}

\put(20,40){\line(4,1){20}}
\put(20,40){\line(-4,1){20}}

\put(41.5,4){$\Gamma^0=\Gamma$}
\put(-15.5,9){$\Delta=\Delta^1$}
\put(41.5,14){$\Gamma^2$}
\put(-6.5,19){$\Delta^3$}
\put(41.5,24){$\Gamma^4$}
\put(-6.5,29){$\Delta^5$}
\put(41.5,44){$\Gamma^{\infty}$}
\put(-6.5,44){$\Delta^{{}\!\!\infty}$}

\put(21,4){$\eta^1$}
\put(15,9){$\eta^2$}
\put(21,14){$\eta^3$}
\put(15,19){$\eta^4$}
\put(21,24){$\eta^5$}

\put(21,37){$\eta^{\infty}$}

\put(-5,36){$\vdots$}
\put(42,33){$\vdots$}
\put(22,30){$\vdots$}

\put(19,52){$\eta$}
\put(14,-4.5){$\eta^0=0$}
\end{picture}
\bigskip

\caption{\sc A herringbone-like portion of $\textrm{Con}(\m d)$.}\label{fig13}
\end{center}
\end{figure}
Define sequences of congruences
\begin{align} \label{eqns}
\Delta^1&= \Delta,\quad\quad \Delta^{2n+1}=\Delta+\eta^{2n}\\
\Gamma^0&= \Gamma,\quad\quad \Gamma^{2n+2}=\Gamma+\eta^{2n+1} \notag\\
\eta^0&= 0,\quad\quad
\eta^{2n+1}=\eta\cap \Delta^{2n+1}, \quad\quad
\eta^{2n}=\eta\cap \Gamma^{2n}, \notag \\
\Delta^{\infty}&=
\bigcup \Delta^{2n+1}, \quad\quad
\Gamma^{\infty} = \bigcup \Gamma^{2n}, \quad\quad
\eta^{\infty} = \bigcup \eta^n. \notag
\end{align}
See Figure~\ref{fig13} for a depiction
of the ordering of these congruences in $\Con(\m d)$.
This figure need not be a sublattice of $\Con(\m d)$
and the congruences in the figure
need not be distinct,
but the indicated (non-strict) comparabilities hold
and the meet or join
of any element in the central chain
with any element in a side chain is depicted correctly,
as we explain in the next claim.

\begin{clm} \label{order}
\mbox{}
  
\begin{enumerate}
\item $\eta^0\leq \eta^1\leq \eta^2\leq \cdots\leq \eta^{\infty}\leq \eta\cap(\Delta+\Gamma)$.
\item $\Delta=\Delta^1 \leq \Delta^3\leq \cdots \leq \Delta^{\infty}\leq \Delta+\Gamma$
and
  $\Gamma=\Gamma^0 \leq \Gamma^2\leq \cdots \leq \Gamma^{\infty}\leq \Delta+\Gamma$.
\item $\eta\cap \Delta^{\infty} = \eta^{\infty} = \eta\cap \Gamma^{\infty}$.  
\item $\Delta^{\infty}+\Gamma^{\infty}=\Delta+\Gamma\leq \overline{\theta}_1\times \overline{\theta}_2$.
\item The sets 
  $\{\eta^{2m+1}, \eta^{2m+2}, \eta^{2m+3}, \Delta^{2m+1}, \Delta^{2m+3}\}$ and
  $\{\eta^{2n}, \eta^{2n+1}, \eta^{2n+2}, \Gamma^{2n}, \Gamma^{2n+2}\}$
are sublattices of $\Con(\m d)$ for every $m, n\geq 0$.
\item The meet or join
of any element in the central chain
with any element in a side chain is depicted correctly
in Figure~\ref{fig13}.
\end{enumerate}
\end{clm}

{\it Proof of Claim~\ref{order}.}
For Item (1), observe that $\eta^0 = 0=\eta\cap\Gamma^0\leq \eta$.
If $\eta^{2n}\leq \eta$ for some $n$, then
$\eta^{2n} \leq \eta\cap(\Delta+\eta^{2n})=\eta^{2n+1}$
and $\eta^{2n+1}=\eta\cap(\Delta+\eta^{2n})\leq \eta$.
Similarly,
if $\eta^{2n+1}\leq \eta$ for some $n$, then
$\eta^{2n+1} \leq \eta\cap(\Gamma+\eta^{2n+1})=\eta^{2n+2}$
and $\eta^{2n+2}=\eta\cap(\Gamma+\eta^{2n+1})\leq \eta$.
Inductively we get that $\eta^0\leq \eta^1\leq \eta^2\leq\cdots$
and that all of these elements lie below $\eta$.

To complete the proof of (1) it will suffice
to show that $\eta^k\leq \Delta+\Gamma$ for all $k$.
This is true of $k=0$ since that $\eta^0=0$.
If $\eta^{2n}\leq \Delta+\Gamma$, then
$\Delta+\eta^{2n}\leq \Delta+\Gamma$, so 
$\eta^{2n+1}=\eta\cap(\Delta+\eta^{2n})\leq \Delta+\Gamma$.
Similarly, if $\eta^{2n+1}\leq \Delta+\Gamma$, then
$\Gamma+\eta^{2n+1}\leq \Delta+\Gamma$, so 
$\eta^{2n+2}=\eta\cap(\Gamma+\eta^{2n+1})\leq \Delta+\Gamma$.
By induction, $\eta^k\leq \Delta+\Gamma$ for all $k$
(hence $\eta^{\infty}\leq \Delta+\Gamma$, too).

For Item (2), the facts that (i) the $\eta$-sequence is increasing
and bounded above by $\Delta+\Gamma$ and (ii)
$\Delta^{2n+1}=\Delta+\eta^{2n}$ are jointly sufficient to imply
that the $\Delta$-sequence is increasing and bounded
above by $\Delta+\Gamma$.
A similar argument proves that the $\Gamma$-sequence
is increasing and bounded above by $\Delta+\Gamma$.

For Item (3),
$\eta\cap\Delta^{\infty} = \eta\cap\left(\bigcup \Delta^{2n+1}\right)
=\bigcup (\eta\cap \Delta^{2n+1}) 
=\bigcup \eta^{2n+1} 
=\eta^{\infty}.
$
Also
$\eta\cap\Gamma^{\infty} = \eta\cap\left(\bigcup \Gamma^{2n}\right)
=\bigcup (\eta\cap \Gamma^{2n}) 
=\bigcup \eta^{2n} 
=\eta^{\infty}.
$

For the equality in Item~(4), we have
$\Delta \leq \Delta^{\infty}\leq \Delta+\Gamma$
and
$\Gamma \leq \Gamma^{\infty}\leq \Delta+\Gamma$.
Joining these yields 
$\Delta+\Gamma \leq \Delta^{\infty}+\Gamma^{\infty}\leq \Delta+\Gamma$,
so $\Delta^{\infty}+\Gamma^{\infty}= \Delta+\Gamma$.
For the inequality in Item~(4), we have
$\Delta = \Delta_{\overline{\alpha},\overline{\theta}}\leq
\overline{\theta}_1\times \overline{\theta}_2$
and
$\Gamma = 
\overline{\theta}_1\times \eta_2\leq 
\overline{\theta}_1\times \overline{\theta}_2$,
so $\Delta+\Gamma \leq \overline{\theta}_1\times \overline{\theta}_2$.

We have already established in Items~(1) and (2)
that
$\eta^{2m+1}\leq \eta^{2m+2}\leq \eta^{2m+3}$
and $\Delta^{2m+1}\leq \Delta^{2m+3}$.
For the first part of Claim~\ref{order}~(5), it remains
to show that 
(i) $\Delta^{2m+1}+\eta^{2m+2}=\Delta^{2m+3}$
and (ii) $\Delta^{2m+1}\cap \eta^{2m+3}=\eta^{2m+1}$.
For (i), $\Delta^{2m+1}+\eta^{2m+2}=(\Delta+\eta^{2m})+\eta^{2m+2}=
\Delta+(\eta^{2m}+\eta^{2m+2})=\Delta+\eta^{2m+2}=\Delta^{2m+3}$.
For (ii), recall that 
$\eta^{2m+1}=\eta\cap \Delta^{2m+1}\leq \Delta^{2m+1}$.
Intersect the inequalities
$\eta^{2m+1}\leq \eta^{2m+3}\leq \eta$ throughout with 
$\Delta^{2m+1}$ to obtain 
$\eta^{2m+1}=\Delta^{2m+1}\cap \eta^{2m+1}\leq \Delta^{2m+1}\cap \eta^{2m+3}\leq \Delta^{2m+1}\cap \eta=\eta^{2m+1}$.
The middle value must equal the outer value, so 
$\Delta^{2m+1}\cap \eta^{2m+3}=\eta^{2m+1}$.

Item~(6) is a consequence of Items~(1), (2), and (5).
For example, the fact that $\eta^8+\Gamma^0=\Gamma^8$
may be argued:
\[
\begin{array}{rll}
\eta^8+\Gamma^0
&=(\eta^8+\eta^2)+\Gamma^0& (1)\\
&=\eta^8+(\eta^2+\Gamma^0)&\\
&=\eta^8+\Gamma^2& (5)\\
&=\eta^8+\Gamma^4& (1)+(5)\\
&=\eta^8+\Gamma^6& (1)+(5)\\
&=\Gamma^8& (5)
\end{array}
\]
while the fact that $\eta^8\cap \Gamma^0=0$
may be argued:
\[
\begin{array}{rll}
\eta^8\cap \Gamma^0
&=\eta^8\cap (\Gamma^6\cap \Gamma^0)& (2)\\
&=(\eta^8\cap \Gamma^6)\cap \Gamma^0&\\
&=\eta^6\cap \Gamma^0& (5)\\
&=\eta^4+\Gamma^0& (2)+(5)\\
&=\eta^2+\Gamma^0& (2)+(5)\\
&=0& (5)
\end{array}
\]
\cqed
\bigskip

From Claim~\ref{order}~(4) we have
$\eta\cap \Delta^{\infty} = \eta^{\infty}= \eta\cap \Gamma^{\infty}$.
It follows from this and Theorem~\ref{basic_centrality}~(8)
that $\C C(\eta,\Delta^{\infty};\eta^{\infty})$
and $\C C(\eta,\Gamma^{\infty};\eta^{\infty})$ hold.
The rest of the proof is devoted
to proving that
$\C C(\eta,\Delta^{\infty}+\Gamma^{\infty};\eta^{\infty})$
does \emph{not} hold. If we do this, then,
factoring by $\eta^{\infty}$
(which is below all congruences involved),
we get that in $\m d/\eta^{\infty}$ we have
the following failure of the right semidistributive law:
\[
[\eta/\eta^{\infty},\Delta^{\infty}/\eta^{\infty}]=0=
[\eta/\eta^{\infty},\Gamma^{\infty}/\eta^{\infty}],\;
\textrm{ but }
[\eta/\eta^{\infty},\Delta^{\infty}/\eta^{\infty}+\Gamma^{\infty}/\eta^{\infty}]\neq 0.
\]
Since $\Delta^{\infty}+\Gamma^{\infty}=\Delta+\Gamma$,
we can write our remaining goal as:

\begin{goal} \label{goal}
Show that $\C C(\eta,\Delta+\Gamma;\eta^{\infty})$ fails.
\end{goal}

Recall from the fourth paragraph of this proof
(i.e. of the proof of Theorem~\ref{main1.5})
that $0<\overline{\theta}<\overline{\alpha}$ and
$[\overline{\alpha},\overline{\theta}]=\overline{\theta}$.
This puts us in a position to mimic
the construction in Theorem~\ref{distributive_thm}~(2).
As was the case there
(with $\alpha, \beta$ there replaced by
$\overline{\alpha},\overline{\theta}$ here), 
there is an $\overline{\alpha},\overline{\theta}$-matrix
\[
\begin{bmatrix}
t(\wec{a},\wec{u}) &   t(\wec{a},\wec{v}) \\
t(\wec{b},\wec{u}) &   t(\wec{b},\wec{v}) 
\end{bmatrix}=
\begin{bmatrix}
p &  q \\
r &  s
\end{bmatrix}, \;\;\;
\wec{a}\wrel{\overline{\alpha}}\wec{b},\;\; \wec{u}\wrel{\overline{\theta}}\wec{v},
\]
with $p=q$ but $r\neq s$.
The fact that this is an
$\overline{\alpha},\overline{\theta}$-matrix
implies, in particular, that
$(p,r), (q,s)\in \overline{\alpha}$ and
$(p,q), (r,s)\in \overline{\theta}$.

\begin{clm} \label{special_matrix}
\begin{equation}
  \tag{MM}\label{MMM}
\begin{bmatrix}
t\left((\wec{b},\wec{a}),(\wec{u}, \wec{u})\right)&
t\left((\wec{b},\wec{a}),(\wec{u}, \wec{v})\right)\\
t\left((\wec{b},\wec{b}),(\wec{u}, \wec{u})\right)&
t\left((\wec{b},\wec{b}), (\wec{u}, \wec{v})\right)
\end{bmatrix}=
\begin{bmatrix}
(r, p) &  (r, q) \\
(r, r) &  (r, s)
\end{bmatrix}
\end{equation}
is an $\eta, (\Delta+\Gamma)$-matrix
of $\m d = \m b(\overline{\alpha})$
that is constant on the first row and not constant
on the second row.
\end{clm}

{\it Proof of Claim~\ref{special_matrix}.}
To show that the matrix given is truly an
$\eta, (\Delta+\Gamma)$-matrix,
we first argue that the elements
of the form
$(b_i,a_i)$, 
$(u_i,u_i)$,
$(b_i,b_i)$, and 
$(u_i,v_i)$ belong to the algebra
$\m d=\m b(\overline{\alpha})$.
This is so, because
$\wec{a}\wrel{\overline{\alpha}}\wec{b},\;\;
\wec{u}\wrel{\overline{\theta}}\wec{v}$,
$\overline{\theta}\subseteq \overline{\alpha}$,
and the universe of $\m d$
is $\overline{\alpha}$.

Next, we need to argue that
$((b_i,a_i),(b_i,b_i))\in\eta=\eta_1$,
and 
$((u_i,u_i), (u_i,v_i))\in\Delta+\Gamma$.
The former is clear, since 
$(b_i,a_i)$ and $(b_i,b_i)$
have the same first coordinate. The latter is clear, 
since $\Delta = \Delta_{\overline{\alpha},\overline{\theta}}$
and $\Gamma=\overline{\theta}_1\times\eta_2$
and for each subscript $i$ we have
$(u_i,v_i)\in\overline{\theta}$, so
\[
(u_i,u_i)\stackrel{\Delta}{\equiv}
(v_i,v_i) \stackrel{\Gamma}{\equiv}
(u_i,v_i).
\]

We have shown that the matrix in \eqref{MMM} is truly
an $\eta_1, (\Delta+\Gamma)$-matrix.
The first row is constant and the second is not
(since $p=q$ and $r\neq s$ as one sees in the lines
before the statement of Claim~\ref{special_matrix}).
\cqed
\bigskip

Establishing Goal~\ref{goal}, that 
$\C C(\eta,\Delta+\Gamma;\eta^{\infty})$ fails,
is equivalent to establishing that there exists
some $\eta,(\Delta+\Gamma)$-matrix whose first row
lies in $\eta^{\infty}$ and whose second
row does not. We shall argue that 
the matrix in \eqref{MMM} is such a matrix.
Already we know from Claim~\ref{special_matrix}
that this matrix is an 
$\eta,(\Delta+\Gamma)$-matrix. We also know
that the first row
lies in $\eta^{\infty}$, since the first row
is constant. The rest of the proof
is devoted to showing that the second
row does not belong to
$\eta^{\infty}$. 
For this, let
$V=r/\overline{\theta}$ be the $\overline{\theta}$-class
of $r$ in $\m b$ and let 
$U=V\times V =
(r, r)/(\overline{\theta}_1\times\overline{\theta}_2)$ be the
$\overline{\theta}_1\times\overline{\theta}_2$-class
of $(r, r)$ in $\m b(\overline{\alpha})$
and let $0_U$ be the equality relation on 
$U$. Since $(r,s)\in\overline{\theta}$,
we have $(r,r), (r,s)\in U$.
Since $r\neq s$, we also have
$(r,r) \neq (r,s)$.
We shall accomplish Goal~\ref{goal} by showing that
$\eta^{\infty}|_U = 0_U$, so
$((r,r),(r,s))\notin \eta^{\infty}$.

\begin{clm} \label{U}
  $U$ is a union of $\Delta$-classes
  and a union of $\Gamma$-classes.
In fact, $U$ is a union of congruence classes for each of the congruences
  $\Delta^{2n+1}, \Gamma^{2n}, \eta^k$ and
  $\Delta^{\infty}, \Gamma^{\infty}, \eta^{\infty}$.
\end{clm}  

{\it Proof of Claim~\ref{U}.}
Since $U$ is a single class
of the congruence $\overline{\theta}_1\times\overline{\theta}_2$,
it is a union of congruence classes of any smaller
congruence. According to
Claim~\ref{order}, all of the congruences
$\Delta=\Delta^1$, $\Gamma=\Gamma^0$, 
  $\Delta^{2n+1}$, $\Gamma^{2n}$, $\eta^k$,
$\Delta^{\infty}$, $\Gamma^{\infty}$, $\eta^{\infty}$
are contained in $\Delta+\Gamma$, which is
contained in 
$\overline{\theta}_1\times\overline{\theta}_2$.
\cqed
\bigskip

\begin{clm} \label{restriction_is_zero1}
$\eta^1|_U= 0_U$.
\end{clm}  

{\it Proof of Claim~\ref{restriction_is_zero1}.}
This claim is proved by localizing the proof
of (a)$\Rightarrow$(b) of Theorem~\ref{refinement}
to the congruence class $U$.

Since $0<\overline{\theta}<\overline{\alpha}$
in $\Con(\m b)$ 
and $0=[\overline{\theta},\overline{\alpha}]\;(\geq [\overline{\theta},\overline{\theta}])$
we get that $\overline{\theta}$ is an abelian
congruence of $\m b$ and hence
$\overline{\theta}_1\times \overline{\theta}_2$ is an abelian
congruence of $\m b(\overline{\alpha})$.
The $\overline{\theta}_1\times \overline{\theta}_2$-class
$U$ is therefore a class of an abelian congruence.

Recall that $\eta = \eta_1$ is the first projection
kernel of $\m d = \m b(\overline{\alpha})$.
If 
$((c,a),(c,b))\in\eta^1|_U = \eta|_U\cap \Delta|_U$,
then since $(c,a), (c,b)\in U$ it must be that
$a\stackrel{\overline{\theta}}{\equiv}c\stackrel{\overline{\theta}}{\equiv}b$.
Let $T(x_1,\ldots,x_{n})$ be a Taylor term for $\mathcal V$.
Consider a first-place Taylor identity
$T(x,\wec{w})\approx T(y,\wec{z})$ where
$\wec{w}, \wec{z}\in \{x,y\}^{n-1}$.
Substitute $b$ for all occurrences of $x$ and $c$
for all occurrences of $y$.
This yields $T(b,\bar{u}) = T(c,\bar{v})$
where all $u_i$ and $v_i$ are in $\{b,c\}$.
The facts that $\overline{\theta}\leq \overline{\alpha}$,
$[\overline{\theta},\overline{\alpha}]=0$, and
$U$ is a $\overline{\theta}_1\times \overline{\theta}_2$-class
imply that the diagonal of $U$ is a single
$\Delta=\Delta_{\overline{\alpha},\overline{\theta}}$-class of
$\m d = \m b(\overline{\alpha})$.
As in the proof of
Theorem~\ref{refinement},
the fact that $(T(b,\wec{u}),T(c,\wec{v}))$
lies on the diagonal of $U$ implies that
$(T(a,\wec{u}),T(c,\wec{v}))=(T(b,\wec{u}),T(c,\wec{v}))$,
so $T(a,\wec{u})=T(b,\wec{u})$
where each $u_i\in \{b,c\}$.
By the $\overline{\theta},\overline{\theta}$-term
condition, 
$T(a,\bar{y}) = T(b,\bar{y})$
for any $\bar{y}$ whose entries
lie in the $\overline{\theta}$-class containing $a, b$ and $c$.
As in the proof of
Theorem~\ref{refinement},
this conclusion holds in every place of $T$.
That is,
\[T(y_1,\ldots,y_{i-1},a,y_{i+1},\ldots,y_n) = 
T(y_1,\ldots,y_{i-1},b,y_{i+1},\ldots,y_n)\]
for each $i$ and any choice of values for 
$y_1,\ldots,y_n$ in the $\overline{\theta}$-class of $a, b$, and $c$.
Using the
fact that $T$ is idempotent, we have
\[
a=T(a,a,\ldots,a)=T(b,a,\ldots,a)=\cdots=T(b,b,\ldots,b)=b.
\]
This proves that $(c,a)=(c,b)$.
Since $((c,a),(c,b))\in\eta^1|_U$
was arbitrarily chosen,
$\eta^1|_U = \eta|_U\cap \Delta|_U = 0_U$.
\cqed
\bigskip

\begin{clm} \label{restriction_is_zero}
$\eta^{\infty}|_U = 0_U$.
\end{clm}  

{\it Proof of Claim~\ref{restriction_is_zero}.}
Since $U$ is a single 
$\overline{\theta}_1\times\overline{\theta}_2$-class,
the restriction map from the congruence interval
$I[0,\overline{\theta}_1\times\overline{\theta}_2]$
in $\Con(\m d)$ to the lattice of equivalence relations
on the set $U$ is a complete lattice homomorphism.
(Under this map a congruence $x$ maps
to $x|_U=x\cap (U\times U)$.)
If you apply this restriction map to the congruences in
Figure~\ref{fig13} (excluding $\eta$, which need not
be in the interval $I[0,\overline{\theta}_1\times\overline{\theta}_2]$)
you will get a similarly-ordered
set of equivalence relations on $U$.
If you replace each congruence $x$
in Figure~\ref{fig13} with $x|_U$,
then all the claims of Claim~\ref{order}
remain true.

In particular, the set
$\{\eta^{0}|_U, \eta^{1}|_U, \eta^{2}|_U, \Gamma^0|_U, \Gamma^2|_U\}$
is a sublattice that is a quotient of a pentagon.
By Claim~\ref{restriction_is_zero1},
$\eta^0|_U = 0_U = \eta^1|_U$. Since we are dealing with a quotient
of a pentagon, we derive that 
$\Gamma^0|_U = \Gamma^2|_U$, since 
$\Gamma^0|_U = \Gamma^0|_U+\eta^0|_U=\Gamma^0|_U+\eta^1|_U=\Gamma^2|_U$.
Then we have $\eta^2|_U = \eta^1|_U = \eta^0|_U$, since
$\eta^2|_U = \eta^2|_U\cap \Gamma^2|_U=\eta^2|_U\cap \Gamma^0|_U=\eta^0|_U$.
In summary, from
$\eta^1|_U = \eta^0|_U$ we derive
$\Gamma^2|_U = \Gamma^0|_U$ from which we derive
$\eta^2|_U = \eta^1|_U$. A similar argument now allows us to derive
from $\eta^2|_U = \eta^1|_U$ that
$\Delta^3|_U = \Delta^1|_U$ from which we derive
$\eta^3|_U = \eta^2|_U$.
This may be continued to derive
$\eta^0|_U = \eta^1|_U = \eta^2|_U = \cdots$,
$\Gamma^0|_U = \Gamma^2|_U = \Gamma^4|_U = \cdots$, and
$\Delta^1|_U = \Delta^3|_U = \Delta^5|_U = \cdots$.
Taking the complete joins of these constant sequences
we get
$\eta^{\infty}|_U = \eta^0|_U = 0$,
$\Gamma^{\infty}|_U = \Gamma^0|_U$, and 
$\Delta^{\infty}|_U = \Delta^0|_U$.
\cqed
\bigskip

\begin{clm} \label{centrality_fails}
$\C C(\eta,\Delta+\Gamma;\eta^{\infty})$ fails.
\end{clm}  

{\it Proof of Claim~\ref{centrality_fails}.}
From Claim~\ref{special_matrix} we know
that matrix \eqref{MMM} is an 
$\eta,(\Delta+\Gamma)$-matrix whose
first row is constant, hence the elements in the first row
are congruent
modulo $\eta^{\infty}$.
The second row is not constant
but lies in $U$. Since $\eta^{\infty}|_U=0_U$,
the elements in the second row
are not congruent
modulo $\eta^{\infty}$. Thus matrix \eqref{MMM}
witnesses the failure of $\C C(\eta,\Delta+\Gamma;\eta^{\infty})$.
\cqed
\bigskip

We complete the proof of Theorem~\ref{main1.5}
by reiterating ideas mentioned before the statement
of Goal~\ref{goal}.
By Claim~\ref{order}~(3),
$\eta\cap \Delta^{\infty}=\eta\cap \Gamma^{\infty}=\eta^{\infty}$,
so all of the congruences
$\eta, \Delta^{\infty}, \Gamma^{\infty}$ lie above $\eta^{\infty}$.
In $\Con(\m d/\eta^{\infty})$, let
$x = \eta/\eta^{\infty}, 
y = \Delta^{\infty}/\eta^{\infty}, 
z = \Gamma^{\infty}/\eta^{\infty}$. We have
$x\cap y = 0 = x\cap z$, so $\C C(x,y;0)$
and $\C C(x,z;0)$ hold in $\m d/\eta^{\infty}$.
But we do not have
$\C C(x,y+z;0)$ in $\m d/\eta^{\infty}$, since this translates to
$\C C(\eta,\Delta^{\infty}+\Gamma^{\infty};\eta^{\infty})$
in $\m d$, which is the same statement as 
$\C C(\eta,\Delta+\Gamma;\eta^{\infty})$.
We proved in Claim~\ref{centrality_fails}
that $\C C(\eta,\Delta+\Gamma;\eta^{\infty})$ fails in $\m d$.
\end{proof}

Next
we show that four of the centralizer properties
from the Introduction are Maltsev definable relative
to the existence of a Taylor term.
The following is another of the primary results
of this article.

\begin{thm} \label{main3}
Let $\mathcal V$ be a variety that has a Taylor term.
The following are equivalent properties for $\mathcal V$:
\begin{enumerate}
\item[(1)]  $\mathcal V$ is congruence modular.
\item[(2)] The centralizer relation is
symmetric in its first two places throughout $\mathcal V$. \newline  
{\rm ($\C C(x,y;z)\Longleftrightarrow \C C(y,x;z)$.)}
\item[(3)] Relative right annihilators exist. \newline
  {\rm (Given $x, z$, there is a largest $y$
    such that $\C C(x,y;z)$. Write $y=(z:x)_R$.)}  
\item[(4)]
  The centralizer relation is determined by the commutator
  throughout $\mathcal V$. \newline
({\rm $\C C(x,y;z)\Longleftrightarrow [x,y]\leq z$.})
\item[(5)]
  The centralizer relation is
stable under lifting in its third place throughout $\mathcal V$. \newline
{\rm ($\C C(x,y;z)\;\&\; (z\leq z')\Longrightarrow \C C(x,y;z')$.)}
\end{enumerate}
\end{thm}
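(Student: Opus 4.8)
The plan is to route everything through Theorem~\ref{main2}, which already identifies congruence modularity with left (equivalently right) distributivity of the commutator among Taylor varieties, and through Theorem~\ref{basic_centrality}. I would establish $(1)\Rightarrow(2)\Rightarrow(3)$, then $(1)\Rightarrow(4)\Leftrightarrow(5)$, then $(4)\Rightarrow(1)$, and finally $(3)\Rightarrow(1)$, closing both chains. The implications out of $(1)$ are the classical facts about the modular commutator: in a congruence modular variety $\C C(\alpha,\beta;\delta)$ is equivalent to $[\alpha,\beta]\leq\delta$ (this is $(4)$), the commutator is commutative, so $(4)$ yields $(2)$, and $(5)$ is immediate from $(4)$ since $\C C(\alpha,\beta;\delta)$ forces $[\alpha,\beta]\leq\delta\leq\delta'$. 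For $(3)$ I would use $(2)$ directly: the congruence $\beta^{*}=\bigvee\{\beta\mid\C C(\alpha,\beta;\delta)\}$ satisfies $\C C(\beta^{*},\alpha;\delta)$ by semidistributivity of the centralizer in its first place, Theorem~\ref{basic_centrality}(5), hence $\C C(\alpha,\beta^{*};\delta)$ again by $(2)$, so $\beta^{*}=(\delta:\alpha)_{R}$; the very same computation proves $(2)\Rightarrow(3)$.

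The equivalence $(4)\Leftrightarrow(5)$ is routine: $\C C(\alpha,\beta;[\alpha,\beta])$ always holds, so $(5)$ applied to it gives the nontrivial half of $(4)$, while $(4)$ together with the always-valid implication $\C C(\alpha,\beta;\delta)\Rightarrow[\alpha,\beta]\leq\delta$ gives $(5)$. For $(4)\Rightarrow(1)$, set $\delta:=[\alpha,\gamma]+[\beta,\gamma]$; then $(4)$ yields $\C C(\alpha,\gamma;\delta)$ and $\C C(\beta,\gamma;\delta)$, so $\C C(\alpha+\beta,\gamma;\delta)$ by Theorem~\ref{basic_centrality}(5), i.e. $[\alpha+\beta,\gamma]\leq[\alpha,\gamma]+[\beta,\gamma]$; the reverse inequality is monotonicity, so the commutator is left distributive and Theorem~\ref{main2} delivers congruence modularity.

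The one substantial implication is $(3)\Rightarrow(1)$, which I expect to be the main obstacle. Taking $z=0$ in $(3)$ shows that right annihilators exist, so Theorem~\ref{main1.5} gives $\mathcal V$ a difference term and Theorem~\ref{main1} makes its commutator commutative; moreover $(3)$ amounts to semidistributivity of the centralizer in its second place. Arguing by contraposition, suppose $\mathcal V$ is not congruence modular; then some $\m a\in\mathcal V$ carries a pentagon $\{0,\beta,\delta,\theta,\alpha\}$ with $\beta\cap\theta=0\leq\delta<\theta$ and $\alpha=\beta+\delta\geq\theta$, and Theorem~\ref{memoir_pentagon_2} forces $[\alpha,x]_{\delta}=x$ for every $x\in I[\delta,\theta]$, while $[\beta,\theta]\leq\beta\cap\theta=0$. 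The plan is to manufacture, inside a suitable subalgebra of $\m a(\theta)$ or $\m a(\alpha)$ built in the spirit of the herringbone construction from the proof of Theorem~\ref{main1.5}, two incomparable congruences that each centralize a fixed congruence modulo a fixed third one but whose join does not; this exhibits a missing relative right annihilator and contradicts $(3)$. The difference term is used, through Theorem~\ref{stable} (and possibly Theorem~\ref{better_pentagons} to normalize the pentagon), to promote the abelianness $[\beta,\theta]=0$ into the centralities enjoyed by the two incomparable congruences, while Theorem~\ref{memoir_pentagon_2} supplies the obstruction that blocks centralizing their join. Choosing the precise congruences and verifying the lattice comparabilities of that construction is where the real work lies.
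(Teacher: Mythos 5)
Your cycle of implications differs from the paper's in a productive way for the centralizer statements: the paper proves $(1)\Rightarrow(4)\Rightarrow(5)$ and then $(5)\Rightarrow(1)$ by hitting a pentagon with $\C C(\beta,\theta;\beta\cap\theta)$, lifting via $(5)$ to $\C C(\beta,\theta;\delta)$, and contradicting Theorem~\ref{memoir_pentagon}. Your route --- prove $(4)\Leftrightarrow(5)$ directly (which is indeed routine, as you say, from $\C C(\alpha,\beta;[\alpha,\beta])$ and the definition of the commutator) and then close with $(4)\Rightarrow(1)$ via left distributivity and Theorem~\ref{main2} --- is a genuinely different closing move and arguably cleaner, since it packages the pentagon analysis into the already-proved Theorem~\ref{main2} rather than redoing it. This part is correct and complete.

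The genuine gap is in $(3)\Rightarrow(1)$. You treat it as requiring a new herringbone-style construction ``in the spirit of the proof of Theorem~\ref{main1.5}'' and explicitly say that choosing the congruences and verifying the lattice facts is left to do, so the implication is not actually proved. More importantly, the construction you sketch is not needed, and the ingredient you reach for doesn't do what you want: Theorem~\ref{memoir_pentagon_2} yields that $\C C(\alpha,\theta;\delta)$ fails, but the failure you need is of $\C C(\theta,\alpha;\delta)$ (the $\theta$ in the first coordinate), and those are not interchangeable. The paper's argument is much shorter and uses a different obstruction. Once $(3)$ gives right annihilators and hence (Theorem~\ref{main1.5}) a difference term, take any pentagon $\{0,\beta,\delta,\theta,\alpha\}$ in $\Con(\m a)$: by Theorem~\ref{basic_centrality}~(7) both $\C C(\theta,\beta;\delta)$ and $\C C(\theta,\delta;\delta)$ hold, so $\beta,\delta\le(\delta:\theta)_R$, forcing $\alpha=\beta+\delta\le(\delta:\theta)_R$, i.e.\ $\C C(\theta,\alpha;\delta)$; monotonicity then gives $\C C(\theta,\theta;\delta)$, so the critical interval $I[\delta,\theta]$ is abelian, contradicting the neutrality of pentagon critical intervals in difference-term varieties (Theorem~\ref{diff_char}). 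In other words, the two incomparable congruences whose join breaks $(3)$ are already $\beta$ and $\delta$ sitting in the pentagon itself; no auxiliary algebra or herringbone is required, and the obstruction comes from neutrality (Theorem~\ref{diff_char}), not from Theorem~\ref{memoir_pentagon_2}.
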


\begin{proof}
The fact that
the bi-implication in Item~(2) holds in every
congruence modular variety
is proved in \cite[Proposition~4.2]{freese-mckenzie}.
(The bi-implication is
(1)(iii)$\Leftrightarrow$(1)(iv) of Proposition~4.2 of the reference.)
Hence (1)$\Rightarrow$(2).
We next explain how to derive (3) from (2):
It follows from 
Theorem~\ref{basic_centrality}~(5) that the
relative \emph{left} annihilator, $(\delta:\theta)_L$, exists for any
$\delta,\theta\in\Con(\m a)$ on any algebra $\m a$.
From (2), which asserts the symmetry of the centralizer relation
in its first two places, it follows that relative
right annihilators must also exist and
that $(\delta:\theta)_R=(\delta:\theta)_L$
for every $\delta$ and $\theta$.

The fact that
the bi-implication in Item~(4) holds in every
congruence modular variety is
(1)(iii)$\Leftrightarrow$(1)(v) of \cite[Proposition~4.2]{freese-mckenzie}.
Hence (1)$\Rightarrow$(4).
We also have (4)$\Rightarrow$(5), for the following reason:
(4) asserts that the centralizer relation is
equivalent to the relation $[x,y]\leq z$, which
is a relation that is stable under lifting in $z$.
Thus, (4)
implies that the centralizer is stable under lifting in
its third place, which is Item (5).

So far we have (1)$\Rightarrow$(2)$\Rightarrow$(3)
and
(1)$\Rightarrow$(4)$\Rightarrow$(5).
To finish the proof it will suffice to establish
(3)$\Rightarrow$(1) and
(5)$\Rightarrow$(1).

In this paragraph we prove
(3)$\Rightarrow$(1) by contradiction.
Therefore, assume that (3) holds
(relative right annihilators exist)
and (1) fails ($\mathcal V$ is not congruence modular).
We also assume throughout the proof that
the global hypotheses of the theorem hold
($\mathcal V$ is a variety that has a Taylor term).
If relative right annihilators (those of the form $(\delta:\theta)_R$)
always exist,
then ordinary right annihilators
(those of the form $(0:\theta)_R$)
must also exist.
Since $\mathcal V$ has a Taylor term
and the property that ordinary right annihilators
exist throughout $\mathcal V$, it follows from
Theorem~\ref{main1.5} that $\mathcal V$ has a difference term.
Since we have assumed that
$\mathcal V$ is not congruence modular, 
there will exist an algebra $\m a\in {\mathcal V}$
with a pentagon in $\Con(\m a)$.
We label it as in Figure~\ref{fig4}.
For this algebra we have
$\C C(\theta,\beta;\delta)$ and
$\C C(\theta,\delta;\delta)$ by
Theorem~\ref{basic_centrality}~(7).
Hence $\beta, \delta\leq (\delta:\theta)_R$,
which forces $\alpha = \beta+\delta\leq (\delta:\theta)_R$,
or equivalently $\C C(\theta,\alpha;\delta)$.
By monotonicity in the middle place
we derive $\C C(\theta,\theta;\delta)$,
which implies that the critical interval
of the pentagon, $I[\delta,\theta]$,
is abelian. But according to Theorem~\ref{diff_char},
critical intervals of pentagons are neutral
in varieties with a difference term.
We have arrived at a contradiction, since
nontrivial congruence intervals like $I[\delta,\theta]$
cannot be both abelian ($[\theta,\theta]_{\delta}=\delta$)
and neutral ($[x,x]_{y}=x$ for $\delta\leq y\leq x\leq \theta$).
This completes the proof that (1), (2), and (3) are equivalent.

In this paragraph we prove
(5)$\Rightarrow$(1) by contradiction.
Therefore, assume that (5) holds
(the centralizer is stable under lifting
in its third place)
and (1) fails ($\mathcal V$ is not congruence modular).
We proceed as above:
since we have assumed that
$\mathcal V$ is not congruence modular, 
there will exist an algebra $\m a\in {\mathcal V}$
with a pentagon in $\Con(\m a)$, which 
we label as in Figure~\ref{fig4}.
According to
Theorem~\ref{basic_centrality}~(8),
we have that
$\C C(\beta,\theta;\beta\cap\theta)$ holds.
Since we have assumed that
the centralizer relation is stable under lifting
in its third place, $\C C(\beta,\theta;\delta)$ holds.
This contradicts Theorem~\ref{memoir_pentagon}.
This completes the argument that Items
(1), (4), and (5) are equivalent.
\end{proof}

Finally we show that the property of weak stability
of the centralizer relation in its third
place is Maltsev definable relative
to the existence of a Taylor term.
This is our last primary result.

\begin{thm} \label{main4}
Let $\mathcal V$ be a variety that has a Taylor term.
The following are equivalent properties for $\mathcal V$:
\begin{enumerate}
\item[(1)]  $\mathcal V$ has a difference term.
\item[(2)] The centralizer relation is weakly
stable under lifting in its third place throughout $\mathcal V$.
{\rm ($\C C(x,y;z)\;\&\; (z\leq z'\leq x\cap y)\Longrightarrow \C C(x,y;z')$.)}
\end{enumerate}
\end{thm}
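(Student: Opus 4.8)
The plan is to establish $(1)\Rightarrow(2)$ directly and $(2)\Rightarrow(1)$ in contrapositive form. The implication $\neg(1)\Rightarrow\neg(2)$ will fall out of Lemma~\ref{commutative_plus_Taylor_lm} with almost no work, while $(1)\Rightarrow(2)$ will take a short $S,T$-matrix computation that exploits the difference term.

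For $\neg(1)\Rightarrow\neg(2)$: assuming $\mathcal V$ has a Taylor term but no difference term, Lemma~\ref{commutative_plus_Taylor_lm} supplies an algebra $\m a\in\mathcal V$ with a pentagon labeled as in Figure~\ref{fig11} for which $[\alpha,\theta]=0$, $\C C(\theta,\alpha;\delta)$ holds, and $[\alpha,x]_{\delta}=x$ for every $x\in I[\delta,\theta]$. I would then read off a failure of weak stability in $\m a$ from the congruences $\alpha,\theta,0,\delta$: the centrality $\C C(\alpha,\theta;0)$ holds since $[\alpha,\theta]=0$; the five congruences of the pentagon are distinct, so $0<\delta$ and $\delta<\theta=\alpha\cap\theta$, giving $0\leq\delta\leq\alpha\cap\theta$; yet $\C C(\alpha,\theta;\delta)$ fails, because taking $x=\theta$ in the third item yields $[\alpha,\theta]_{\delta}=\theta\neq\delta$, which would be impossible if $\delta$ lay in $\{\gamma\geq\delta:\C C(\alpha,\theta;\gamma)\}$. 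Thus the implication of Item~(2), instantiated at $x:=\alpha$, $y:=\theta$, $z:=0$, $z':=\delta$, already fails in $\m a$, hence throughout $\mathcal V$.

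For $(1)\Rightarrow(2)$: fix a difference term $d$ for $\mathcal V$, so that $\mathcal V\models d(x,x,y)\approx y$ and $d^{\m b}(a,b,b)=a$ whenever $(a,b)$ lies in an abelian congruence (Definition~\ref{left_right}), and recall that then $\mathcal V$ has commutative commutator (Theorem~\ref{main1}). Suppose $\m a\in\mathcal V$ has $\C C(\alpha,\beta;\delta)$ with $\delta\leq\delta'\leq\alpha\cap\beta$; the task is $\C C(\alpha,\beta;\delta')$. Factoring by $\delta$ via Theorem~\ref{basic_centrality}(10), I may assume $\delta=0$, so that $[\alpha,\beta]=0$; in particular $\theta:=\alpha\cap\beta$ is abelian and $\C C(\beta,\alpha;0)$ holds. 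Consider an arbitrary $\alpha,\beta$-matrix
\[
\begin{bmatrix}p&q\\r&s\end{bmatrix}=
\begin{bmatrix}t(\wec a,\wec u)&t(\wec a,\wec v)\\ t(\wec b,\wec u)&t(\wec b,\wec v)\end{bmatrix},\qquad
\wec a\wrel{\alpha}\wec b,\quad\wec u\wrel{\beta}\wec v,
\]
with $(p,q)\in\delta'$; I must show $(r,s)\in\delta'$. As $\delta'\leq\alpha$, all four entries are $\alpha$-related, and since $(p,q),(r,s)\in\beta$ as well, both $(p,q)$ and $(r,s)$ lie in $\theta$. Now form, with $\wec b$ and $\wec u$ frozen as parameters, the array
\[
\begin{bmatrix}
d\big(t(\wec a,\wec u),t(\wec a,\wec u),t(\wec b,\wec u)\big)&d\big(t(\wec b,\wec u),t(\wec b,\wec u),t(\wec b,\wec u)\big)\\
d\big(t(\wec a,\wec v),t(\wec a,\wec u),t(\wec b,\wec u)\big)&d\big(t(\wec b,\wec v),t(\wec b,\wec u),t(\wec b,\wec u)\big)
\end{bmatrix}=
\begin{bmatrix}r&r\\ d(q,p,r)&s\end{bmatrix},
\]
where the top row collapses by $d(x,x,y)\approx y$ and the bottom--right entry collapses by $d(s,r,r)=s$, which is valid because $(r,s)\in\theta$ and $\theta$ is abelian. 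Incorporating the frozen parameters into the defining tuples in the standard way, this is a $\beta,\alpha$-matrix of $\m a$ with constant first row, so $\C C(\beta,\alpha;0)$ forces its second row to be constant: $d(q,p,r)=s$. Finally, since $(p,q)\in\delta'$ and $d(x,x,y)\approx y$,
\[
s=d(q,p,r)\equiv_{\delta'}d(q,q,r)=r,
\]
so $(r,s)\in\delta'$, and therefore $\C C(\alpha,\beta;\delta')$ holds.

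The real content sits in the hard direction's matrix step. The delicate points there are, first, to design the auxiliary term so that applying $d$ flattens the $\wec a/\wec b$-variation of the left column into a constant column (via $d(x,x,y)\approx y$) while leaving the bottom--right corner intact (via the left-difference identity on the abelian congruence $\alpha\cap\beta$), and to keep careful track of which variable blocks are frozen so that the result is genuinely an $S,T$-matrix; and, second -- the conceptual crux -- to recognize that the centrality one can apply is $\C C(\beta,\alpha;0)$, the \emph{transposed} relation, which is available only because a difference term makes the commutator commutative. This is precisely the point at which more than a Taylor term is needed, echoing the role that commutativity of the commutator played in Theorems~\ref{main1}, \ref{main2}, and \ref{main1.5}.
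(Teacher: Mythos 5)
Your proof is correct, and it handles $\neg(1)\Rightarrow\neg(2)$ exactly as the paper does: invoke Lemma~\ref{commutative_plus_Taylor_lm}, instantiate $(x,y,z,z')=(\alpha,\theta,0,\delta)$, and read off the failure from $[\alpha,\theta]=0$ together with $[\alpha,\theta]_{\delta}=\theta\neq\delta$.

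Where you genuinely diverge is the easy direction $(1)\Rightarrow(2)$. The paper discharges it in three lines by citing Lemmas~2.3 and 2.4 of \cite{diff}, which encapsulate the ``additivity'' behavior of the relative commutator $[\alpha,\beta]_{\gamma}=[\alpha,\beta]+\gamma$ in difference-term varieties; the argument is a formal manipulation in the relative-commutator calculus. You instead give a self-contained $S,T$-matrix computation: reduce to $\delta=0$, take an arbitrary $\alpha,\beta$-matrix with constant-mod-$\delta'$ top row, and apply the auxiliary term $d(t(\wec x,\wec y),t(\wec x,\wec u),t(\wec b,\wec u))$ to manufacture a $\beta,\alpha$-matrix with constant top row. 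This flattens the top row by the right-Maltsev identity, fixes the bottom-right corner by the left-difference property on the abelian congruence $\alpha\cap\beta$, and then the transposed centrality $\C C(\beta,\alpha;0)$ -- available precisely because a difference term makes the commutator commutative (Theorem~\ref{main1}) -- forces $d(q,p,r)=s$, whence $(r,s)\in\delta'$. This is essentially the same trick used in the paper's proof of Theorem~\ref{stable}, adapted from the chain case to the general $\delta'\leq\alpha\cap\beta$ case. Your approach has the advantage of being self-contained and of making visible exactly where commutativity of the commutator enters; the paper's approach is shorter but outsources the work to \cite{diff}. Both are correct; the choice is one of exposition.
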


\begin{proof}
  For (1)$\Rightarrow$(2), assume that $\mathcal V$
  has a difference term
  and that some $\m a\in {\mathcal V}$ has congruences
$x=\alpha, y=\beta, z=\delta, z'=\gamma$ such that
$\C C(\alpha,\beta;\delta)\;\&\; (\delta\leq \gamma\leq \alpha\cap \beta)$.
By \cite[Lemma~2.3~(i)$\Rightarrow$(ii)]{diff},
$\C C(\alpha,\beta;\delta)$ implies
$[\alpha,\beta]_{\delta}=\delta$.
It now follows from
\cite[Lemma~2.4]{diff} that $[\alpha,\beta]_{\gamma}=[\alpha,\beta]+\gamma
=\gamma$. By \cite[Lemma~2.3~(ii)$\Rightarrow$(i)]{diff},
$\C C(\alpha,\beta;\gamma)$ holds.
This establishes the weak stability property.

Now we argue that if $\mathcal V$ has a Taylor term and
does not have a difference term,
then the centralizer will not be weakly stable in its third
place in some instances.
By Lemma~\ref{commutative_plus_Taylor_lm},
$\mathcal V$ has an algebra $\m a$
with a pentagon in its congruence lattice,
which satisfies the commutator conditions (1), (2), and (3)
of that lemma.
Take $x=\alpha, y=\theta, z=0, z'=\delta$.
From the lemma, $[\alpha,\theta]=0$, so
$\C C(x,y;z)$ holds.
By our choices, $z\leq z'\leq x\cap y$.
Since $[\alpha,\theta]_{\delta}=\theta$ we have
$\neg \C C(x,y;z')$. This shows that weak stability fails.
\end{proof}

\section{Outro}
\subsection{The intended applications}

The results of this paper help
to decide some cases of the following question:
Given a finite algebra $\m a$ of finite type, does
the variety $\mathcal V = {\sf H}{\sf S}{\sf P}(\m a)$
have commutative commutator?
If $\mathcal V$ has a Taylor term, then the answer
is affirmative if and only if $\mathcal V$
also has a difference term.
There are known algorithms 
to decide whether $\mathcal V$ has a Taylor term and 
whether $\mathcal V$ has a difference term
whenever $\mathcal V$ is generated by a finite algebra of finite type.
(These algorithms are implemented in UACalc, \cite{uacalc}).
This gives a path to answer the question
algorithmically for finite algebras of finite type
that have a Taylor term.

Even in the case where $\mathcal V = {\sf H}{\sf S}{\sf P}(\m a)$
does not have a Taylor term,
the results of this paper might apply.
Suppose that $\m a$ is
a finite algebra of finite type and
$\mathcal V = {\sf H}{\sf S}{\sf P}(\m a)$
does not have a Taylor term.
It is possible that some
$\m b\in {\mathcal V}$ generates a subvariety
${\mathcal U} ={\sf H}{\sf S}{\sf P}(\m b)$
that has a Taylor
term but does not have a difference term.
In this case, the
subvariety
will not have commutative commutator, so
$\mathcal V$ cannot have commutative commutator.
If there is such a $\m b\in {\mathcal V}$,
then there must exist such a $\m b$ that is
free on three generators in the subvariety it generates,
hence will be a quotient of the finite, relatively free algebra
$\m f_{\mathcal V}(3)$. Determining whether
such a $\m b$ exists is a matter of a finite
amount of computation. A concrete example where
this happens is when $\m a$ is the semigroup
$\mathbb Z_2\times \mathbb S_2\times \mathbb L_2$
and
$\m b = \mathbb Z_2\times \mathbb S_2$. (Here
$\mathbb Z_2$ is the $2$-element group considered as a semigroup,
$\mathbb S_2$ is the $2$-element semilattice, and
$\mathbb L_2$ is the $2$-element left zero semigroup.)
In this example, 
${\mathcal V} ={\sf H}{\sf S}{\sf P}(\m a)$
does not have a Taylor term,
but one may still apply the results of this
paper to derive that the commutator
is not commutative in $\mathcal V$ since the
subvariety
${\mathcal U} ={\sf H}{\sf S}{\sf P}(\m b)$
has a Taylor term and does not have a difference term.
(Contrast with this example: the algebra
$\m c = \mathbb S_2\times \mathbb L_2$
generates a variety with noncommutative commutator,
but the results of this paper do not help
to establish this since
every subvariety of ${\sf H}{\sf S}{\sf P}(\m c)$
that has a Taylor term also has a difference term.)

\bigskip

Another intended application of the results of this
paper is to help understand whether some theorems are
expressed with optimal hypotheses.
For example, in \cite{park},
\'{A}gnes Szendrei, Ross Willard and I
proved Park's Conjecture for varieties
with a difference term.
Park's Conjecture is the conjecture that a finitely generated
variety of finite type is finitely based
whenever it has a finite residual bound.
One question received after the publication
of that paper
was: How hard would it be to generalize the proof
in \cite{park},
which assumes the existence of a difference term,
to establish Park's Conjecture for varieties with a
\emph{weak} difference term?\footnote{Note: %
A finitely generated variety has a weak difference term if and only
if it has a Taylor term.}
Our proof in \cite{park} depends on the commutativity
of the commutator in some places. Thus one may ask:
if one were to refine the proof in \cite{park}
so that it proves
Park's Conjecture for varieties
that have a \emph{weak} difference term and commutative commutator,
would this refinement constitute a proper generalization
of the result in \cite{park}?
The answer is negative, according to 
Theorem~\ref{commutative_plus_weak} of this paper.
That is, the class of varieties which have a weak
difference term and commutative commutator
is exactly the same as the class of varieties
with a difference term. Any proper generalization
of the result in \cite{park} must apply to
some varieties $\mathcal V$
in which either (i) $\mathcal V$
has no Taylor term or (ii) 
the commutator operation in $\mathcal V$
is not commutative.

\bigskip

\subsection{Some problems from \cite{lipparini}}

The results of this article partially solve
some problems posed by Paolo Lipparini in \cite{lipparini}.
The problems I refer to are:
\bigskip

\noindent
{\bf Problems 1.7 of \cite{lipparini}.}
    
\begin{enumerate}
\item[(a)]
Find conditions implying (if possible, equivalent to)
left join distributivity, right join distributivity or
commutativity of the commutator.
\bigskip
  
\item[(b)]
In particular, is there a (weak) Mal'cev condition strictly weaker than
modularity and implying left join distributivity of the commutator?  
\bigskip
  
\item[(c)]
Does right join distributivity always imply left join distributivity?
\bigskip
  
\item[(e)]
Answer the above questions at least in the
particular cases of varieties
with a (weak) difference term, $M$-permutable varieties,
locally finite varieties
(omitting type ${\bf 1}$ or some other type).
\end{enumerate}
\bigskip

\noindent
{\bf Partial solutions.}

\noindent
In this article we work at the level of varieties.
At this level, we can say the following.

Regarding Problem 1.7(a),
we have characterized those varieties with a Taylor term
that have left distributive, right distributive, or
commutative commutator.

Regarding Problem 1.7(b),
Theorem~\ref{main2} implies 
that there is \underline{no}
idempotent Maltsev condition strictly weaker than
modularity that implies left distributivity of the commutator.

Regarding Problem 1.7(c), 
Theorem~\ref{distributive_thm}~(1) shows
that left distributivity of the commutator
throughout a variety implies commutativity
of the commutator throughout the variety.
Hence left distributivity implies right distributivity
in any variety.
We do not know if, conversely, right distributivity
implies left distributivity in every variety.
Nevertheless we have shown that left and right
distributivity are equivalent for 
varieties with a Taylor term in Theorem~\ref{dist_implies_comm}.

Regarding Problem 1.7(e), if some variety $\mathcal V$
has a difference term, a weak difference term,
is $M$-permutable, or is
a locally finite omitting type ${\bf 1}$, then
$\mathcal V$ has a Taylor term. In these settings we have
classified the varieties that have
left distributive, right distributive, or
commutative commutator.

\bigskip
  
\subsection{Two problems from \cite{64problems}}
The results of this article solve two problems
from the list of 64 open problems posed at
the Workshop on Tame Congruence Theory
which was held at the 
Paul Erd\H{o}s
Summer Research Center of Mathematics in 2001.
\bigskip

\noindent
{\bf Problem 10.6 of \cite{64problems}.}
Let $\mathcal V$
be a locally finite variety that omits type {\bf 1}.
Is it true that if $[\alpha,\beta] = [\beta,\alpha]$
for all congruences $\alpha, \beta$
of algebras in $\mathcal V$, then $\mathcal V$ has a difference term?    
\bigskip

A locally finite variety omits type {\bf 1}
if and only if it has a Taylor term, according to 
Lemma~9.4 and 
Theorem~9.6 of \cite{hobby-mckenzie}.
Thus, Problem~10.6 of \cite{64problems}
asks about the truth of
Theorem~\ref{main1} of this paper
in the restricted setting of 
locally finite varieties.
Theorem~\ref{main1}
provides an affirmative answer.

\bigskip

\noindent
{\bf Problem 10.7 of \cite{64problems}.}
Are there natural conditions on a variety $\mathcal V$
under which the implications
$$
[\alpha,\beta]=[\alpha,\gamma] \Longrightarrow [\alpha,\beta]=[\alpha,\beta+\gamma]
\;\;\textrm{ and }\;\;
[\beta,\alpha]=[\gamma,\alpha] \Longrightarrow 
[\beta,\alpha]=[\beta+\gamma,\alpha]
$$
hold throughout the variety $\mathcal V$?
(Consider, e.g., the condition `$\mathcal V$ has a difference term'.)
\bigskip

Problem~10.7 of \cite{64problems}
asks for natural conditions guaranteeing
the right or left semidistributivity
of the commutator for varieties,
and suggests that having a difference term
might be such a condition.
Every variety has left semidistributive
commutator by Theorem~\ref{basic_centrality}~(5)
and the definition of the commutator,
so the nontrivial part of this problem is the question about
right semidistributivity.
Theorem~\ref{main1.5} proves that, for varieties
with a Taylor term,
the condition proposed in Problem 10.7 of \cite{64problems}
(that $\mathcal V$ has a difference term)
is a necessary and sufficient condition
guaranteeing that $\mathcal V$ has right (and left)
semidistributive commutator.

\bigskip

\subsection{A problem from \cite{novi_sad}}
The results of this article solve a problem
I posed at the 90th Arbeitstagung Allgemeine Algebra
held at the University of Novi Sad in 2015. There
I gave a talk entitled
\emph{Problems on the frontier of commutator theory}.
These twenty-five problems were not published formally,
but the slides for the talk are posted at \cite{novi_sad}.
The thirteenth problem asks
\bigskip

\noindent
{\bf Problem.} Does
$\exists$weak difference term +
symmetric commutator imply
$\exists$difference term?

\bigskip

\noindent
This problem is answered affirmatively in
Theorem~\ref{commutative_plus_weak} of this paper.
The affirmative answer is
strengthened in two ways to
\begin{center}
\underline{$\exists$Taylor term} +
symmetric commutator \underline{$\Longleftrightarrow$}
$\exists$difference term.
\end{center}
in Theorem~\ref{main1}.
The conclusion is: If you want to prove
some case of some conjecture about varieties,
and you need (i) a Taylor term and (ii) a commutative
commutator throughout your variety for the proof,
then the assumption that the variety has a difference term
guarantees both (i) and (ii) and it is the optimal
hypothesis that guarantees both.

\bibliographystyle{plain}

\end{document}